\newcommand\draftcomment[1]{\NoDraftCommentShouldBeLeft}
\newcommand{\urlprefix}{}
\newenvironment{acknowledgements}{\subsection{Acknowledgements}}{}
\newcommand{\subclass}[1]{}
\DeclareMathOperator{\Ind}{Ind}
\newcommand{\E}{E}
\newcommand{\B}{B}
\DeclareMathOperator{\diag}{diag}
\DeclareMathOperator{\Iso}{Iso}
\DeclareMathOperator{\dist}{dist}
\def\real{\abs}
\newcommand{\wbar}{\overline w}
\newcommand{\nc}{\mathcal N} 
\newcommand{\Spherenk}{\Sphere(W_{n,k})}
\begin{document}

\title{The equivariant topology of stable Kneser graphs}

\author{Carsten Schultz}
\address{Institut für Mathematik, MA 6-2\\
Technische Universität Berlin\\
D-10623 Berlin, \hbox{Germany}}
\email{carsten@codimi.de}
\date{29th March 2010}

\begin{abstract}
The stable Kneser graph $SG_{n,k}$, $n\ge1$, $k\ge0$, 
introduced by Schrijver~\cite{schrijver},
is a vertex
critical graph with chromatic number~$k+2$, its vertices are certain
subsets of a set of cardinality~$m=2n+k$.
Björner and de
Longueville~\cite{anders-mark} have shown that its box complex is
homotopy equivalent to a sphere, $\Hom(K_2,SG_{n,k})\homot\Sphere^k$.
The dihedral group $D_{2m}$ acts canonically on $SG_{n,k}$, the group
$C_2$ with $2$ elements acts on $K_2$.  We 
almost 
determine the $(C_2\times D_{2m})$-homotopy
type of $\Hom(K_2,SG_{n,k})$ and use this to prove the following
results.

The graphs $SG_{2s,4}$ are homotopy test graphs, i.e.\ for every graph
$H$ and $r\ge0$ such that $\Hom(SG_{2s,4},H)$ is $(r-1)$-connected,
the chromatic number $\chi(H)$ is at least $r+6$.  

If $k\notin\set{0,1,2,4,8}$ and $n\ge N(k)$ then $SG_{n,k}$ is not a
homotopy test graph, i.e.\ there are a graph~$G$ and an $r\ge1$ such
that $\Hom(SG_{n,k}, G)$ is $(r-1)$-connected and $\chi(G)<r+k+2$.
\end{abstract}

\maketitle

\section{Introduction}
\subsection{Background}
The subject of topological obstructions to graph colourings was
started when Lov{\'a}sz determined the chromatic number of Kneser graphs in
\cite{lovasz}.

\begin{defn}
Let $n\ge1$, $k\ge0$.  The Kneser graph $KG_{n,k}$ is a graph with
vertices the $n$-element subsets of a fixed set of cardinality
$2n+k$, say
\[V(KG_{n,k})=\set{S\subset\Z_{2n+k}\colon \card S = n}.\]
Two such sets are neighbours in $KG_{n,k}$ if and only if they
are disjoint,
\[E(KG_{n,k})=\set{(S,T)\in V(KG_{n,k})\colon S\intersect T=\emptyset}.\]
\end{defn}
It is easy to see that $KG_{n,k}$ admits a $(k+2)$-colouring,
$\chi(KG_{n,k})\le k+2$. Lovász assigned to each graph~$G$ a
simplicial complex, its \emph{neighbourhood complex} $\nc(G)$
and proved the following two theorems.
\begin{thm}\label{thm:lovasz-crit}
If $G$ is a graph and $r\ge 0$ such that $\nc(G)$ 
is $(r-1)$-connected, then $\chi(G)\ge r+2$.
\end{thm}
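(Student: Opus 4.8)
This is Lov\'asz's theorem, and my plan is to prove it in the equivariant form that has since become standard. The complex $\nc(G)$ carries no useful symmetry of its own, so the first step is to pass to an equivariant model: there is a natural homotopy equivalence $\|\nc(G)\|\homot\|\Hom(K_2,G)\|$, and $\Hom(K_2,G)$ is a free $\Z_2$-CW-complex, the $\Z_2$ acting by interchanging the two vertices of $K_2$. (One could equally work with a box complex of $G$; all that matters is that $\nc(G)$ is homotopy equivalent to a free $\Z_2$-space functorially attached to $G$.) As this equivalence preserves connectivity, it suffices to show $\conn(\Hom(K_2,G))\le\chi(G)-3$.

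For the upper bound on connectivity I would use the colouring. Put $m=\chi(G)$; a proper $m$-colouring is exactly a graph homomorphism $c\colon G\to K_m$, and functoriality of $\Hom(K_2,-)$ turns it into a $\Z_2$-map $\Hom(K_2,G)\to\Hom(K_2,K_m)$. The target admits a $\Z_2$-embedding into the antipodal sphere $\Sphere^{m-2}$: send a point $(x,y)$ of $\Hom(K_2,K_m)\subseteq\Delta^{m-1}\times\Delta^{m-1}$ to $x-y$, which has $\ell^1$-norm $2$ because the supports of $x$ and $y$ are disjoint, and so lands on the boundary of the $m$-dimensional crosspolytope; since $(x,y)\mapsto(y,x)$ goes to $-(x-y)$, the map is equivariant. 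Hence $\ind(\Hom(K_2,G))\le\ind(\Sphere^{m-2})=m-2$.

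It remains to combine this with the two standard inequalities $\conn(X)+1\le\coind(X)\le\ind(X)$, valid for every free $\Z_2$-CW-complex $X$. The first is obtained by building a $\Z_2$-map $\Sphere^{\conn(X)+1}\to X$ skeleton by skeleton, extending over each pair of antipodal cells using the connectivity of $X$; the second is the Borsuk--Ulam theorem in the form ``a $\Z_2$-map $\Sphere^a\to\Sphere^b$ forces $a\le b$''. Taking $X=\Hom(K_2,G)$, whose connectivity equals that of $\nc(G)$ and which by the previous paragraph has $\ind(X)\le\chi(G)-2$, we obtain
\[
\conn(\nc(G))+1=\conn(X)+1\le\coind(X)\le\ind(X)\le\chi(G)-2 ,
\]
so $\conn(\nc(G))\le\chi(G)-3$. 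Rephrased for the hypothesis of the theorem: if $\nc(G)$ is $(r-1)$-connected then $r-1\le\chi(G)-3$, i.e.\ $\chi(G)\ge r+2$. The conceptual crux is leaving $\nc(G)$ for an equivariant model; the two places that require genuine work are the homotopy equivalence $\|\nc(G)\|\homot\|\Hom(K_2,G)\|$ and the inequality $\coind(X)\ge\conn(X)+1$, whereas the colouring map and the appeal to Borsuk--Ulam are routine once everything is set up. I expect establishing, or correctly quoting, the homotopy equivalence between $\nc(G)$ and $\Hom(K_2,G)$ to be the main obstacle.
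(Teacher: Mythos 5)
The paper cites this theorem as background (it is Lov\'asz's result) and does not supply a proof of its own; what it offers is a one-paragraph sketch in the introduction, which says exactly what you do: the $(r-1)$-connectivity of $\nc(G)$ is used to obtain an equivariant map from a sphere into a $C_2$-space that is $C_2$-homotopy equivalent to $\Hom(K_2,G)$, and then Borsuk--Ulam finishes. Your proposal is a correct and complete rendering of this standard modern proof. The original Lov\'asz argument works with the Lov\'asz complex $L(G)$ (a $\Z_2$-invariant deformation retract of $\nc(G)$) rather than with $\Hom(K_2,G)$, but these are $\Z_2$-homotopy equivalent, so this is a presentation choice, not a different argument. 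One small imprecision: when you send $(x,y)\mapsto x-y$ you land not just on the boundary of the $m$-dimensional crosspolytope (which is $S^{m-1}$) but also in the hyperplane $\sum_i z_i=0$, and it is the intersection of the two that is $S^{m-2}$; you state the correct target $S^{m-2}$ but should record the hyperplane constraint explicitly, since otherwise the dimension count in $\ind(\Hom(K_2,K_m))\le m-2$ is not justified. Everything else -- the use of $\conn+1\le\coind\le\ind$, the functoriality of $\Hom(K_2,-)$ applied to a colouring $G\to K_m$, and the reduction via the homotopy equivalence $\|\nc(G)\|\simeq\|\Hom(K_2,G)\|$ (which the paper itself uses and attributes to Csorba) -- is sound.
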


\begin{thm}\label{thm:lovasz-conn}
The complex $\nc(KG_{n,k})$ is $(k-1)$-connected.
\end{thm}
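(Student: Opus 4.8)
The plan is to first give a purely combinatorial description of $\nc(KG_{n,k})$ and then prove a slightly more general connectivity statement by induction. Identifying $V(KG_{n,k})$ with $\binom{[m]}{n}$, $m=2n+k$, a family $\sigma$ of $n$-sets is a simplex of $\nc(KG_{n,k})$ exactly when it has a common neighbour in $KG_{n,k}$, i.e.\ an $n$-set disjoint from $\bigcup\sigma$, which exists if and only if $|\bigcup\sigma|\le m-n=n+k$. Thus $\nc(KG_{n,k})=\mathcal C(2n+k;\,n,\,n+k)$, where for $N\ge n\ge1$ and $\ell\ge n$ I write $\mathcal C(N;n,\ell)$ for the complex of all families of $n$-subsets of an $N$-element set whose union has at most $\ell$ elements. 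I would prove: \emph{$\mathcal C(N;n,\ell)$ is $(\ell-n-1)$-connected}; specialising $N=2n+k$, $\ell=n+k$ gives $(k-1)$-connectivity of $\nc(KG_{n,k})$. Observe that $\mathcal C(N;n,\ell)$ is contractible whenever $N\le\ell$ (the size condition is vacuous), in particular when $N=n$; these are the base cases, and the induction runs on $N$.

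For the inductive step $N>\ell$ I would fix an element $x$ of the ground set, let $T^{(1)},\dots,T^{(s)}$ enumerate the $n$-subsets containing $x$, put $\mathcal C_0=\mathcal C(N;n,\ell)$, and let $\mathcal C_j$ be the full subcomplex spanned by $\binom{[N]}{n}\setminus\{T^{(1)},\dots,T^{(j)}\}$, so $\mathcal C_s=\mathcal C(N-1;n,\ell)$ is $(\ell-n-1)$-connected by the inductive hypothesis (or a base case). For each $j$ one has $\mathcal C_{j-1}=\mathcal C_j\cup\overline{\mathrm{st}}_{\mathcal C_{j-1}}(T^{(j)})$ with intersection $L_j:=\mathrm{lk}_{\mathcal C_{j-1}}(T^{(j)})$; as the closed star is a cone, $\mathcal C_{j-1}$ is the mapping cone of the inclusion $L_j\hookrightarrow\mathcal C_j$. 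Hence, by Mayer--Vietoris for homology, van Kampen for $\pi_1$, and Hurewicz, if $\mathcal C_j$ is $(\ell-n-1)$-connected and $L_j$ is $(\ell-n-2)$-connected, then so is $\mathcal C_{j-1}$; descending from $j=s$ to $j=0$ proves the claim for $N$. Everything is thereby reduced to showing that each $L_j$ is $(\ell-n-2)$-connected.

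To analyse $L_j$ I would use the projection $p\colon S\mapsto S\setminus T^{(j)}$, which sends vertices of $L_j$ to nonempty subsets of the $(N-n)$-set $V'=[N]\setminus T^{(j)}$. Since $|T^{(j)}\cup\bigcup\tau|=n+|\bigcup p(\tau)|$, a family $\tau$ lies in $L_j$ precisely when $\bigcup p(\tau)$ has at most $\ell-n$ elements, so $p$ exhibits $L_j$ as a simplicial blow-up (each vertex replaced by a clique) of the auxiliary complex $\mathcal F$ of families of nonempty subsets of $V'$ of size $\le n$ whose union has at most $\ell-n$ elements. The point requiring care is surjectivity of $p$ onto the vertices of $\mathcal F$, i.e.\ that no fibre is emptied by the earlier removals: this holds because $T^{(1)},\dots,T^{(j-1)}$ all contain $x$, whereas every fibre of $p$ contains an $n$-set avoiding $x$. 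A vertex blow-up does not change the homotopy type, so $L_j\simeq\mathcal F$. Finally, inside $\mathcal F$ every vertex $Q$ with $|Q|\ge2$ is dominated by the singleton $\{v\}$ for any $v\in Q$ (if $\tau\cup\{Q\}\in\mathcal F$ then $\tau\cup\{\{v\}\}\in\mathcal F$, since $\{v\}\subseteq Q$), so deleting these vertices one by one retracts $\mathcal F$ onto the subcomplex spanned by its singletons, which is the $(\ell-n-1)$-skeleton of a simplex on $N-n$ vertices and hence $(\ell-n-2)$-connected. This gives the required bound.

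The main obstacle I foresee is the bookkeeping in the link analysis: verifying carefully that $p$ really realises $L_j$ as a vertex-wise blow-up of $\mathcal F$, controlling the interaction with the previously deleted $T^{(i)}$, and disposing of the degenerate ranges ($\ell=n$, and $\ell-n<n$), together with making the ``mapping-cone connectivity'' step rigorous in the low-connectivity regime, where $\pi_0$ and $\pi_1$ must be handled by hand (van Kampen) rather than through Mayer--Vietoris. A secondary point is to confirm that the connectivity exponent $\ell-n-1$ is uniform in $N$, which is clear since $\mathcal C(N;n,\ell)$ is outright contractible for $N\le\ell$.
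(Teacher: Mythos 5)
The paper does not prove this theorem: it is stated as part of the classical background and attributed to Lov\'asz~\cite{lovasz}, with no proof supplied. (The paper's own machinery in Sections~\ref{sec:barany}--\ref{sec:homotopy} gives a related but distinct result, namely the $D_{2m}$-homotopy equivalence $\nc(SG_{n,k})\homot\Sphere^k$ for the \emph{stable} Kneser graph, obtained via the nerve lemma applied to the cover of $\Sphere^k$ by the sets $U_S$ associated with a realization of the alternating oriented matroid; the argument in \prettyref{prop:nerve} uses stability of the sets in an essential way and does not transfer to $KG_{n,k}$.) So there is no in-paper proof against which to compare your argument.

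Your proof is, as far as I can tell, correct and self-contained. The identification $\nc(KG_{n,k})=\mathcal C(2n+k;n,n+k)$ is right (a family of $n$-sets has a common neighbour iff its union has at most $m-n=n+k$ elements), and the induction on the ground set size $N$ with the stronger statement ``$\mathcal C(N;n,\ell)$ is $(\ell-n-1)$-connected'' works: removing all $n$-sets through a fixed point $x$ lands on $\mathcal C(N-1;n,\ell)$, and adding them back one at a time is a sequence of mapping-cone attachments over the links, so the gluing reduction (Mayer--Vietoris/van Kampen plus Hurewicz) is sound once the link connectivity is established. The key step is the link analysis, and the two things that need checking there do hold: the blow-up map $p(S)=S\setminus T^{(j)}$ is surjective onto the vertices of $\mathcal F$ because each fibre $p^{-1}(Q)$ contains an $n$-set avoiding $x$ (choose the complementary part inside $T^{(j)}\setminus\{x\}$, possible since $|Q|\ge1$), so none of the earlier deletions empties a fibre; and $\mathcal F$ folds onto the subcomplex spanned by the singletons, which is the $(\ell-n-1)$-skeleton of a simplex on the $N-n>\ell-n$ points of $V'$ and hence $(\ell-n-2)$-connected. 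The degenerate case $\ell=n$ (so $\ell-n-2=-2$, vacuous) and the $\pi_0/\pi_1$ cases you flag are all handled by the fibre-nonemptiness observation. One small presentational point: you introduce $\mathcal F$ as families of ``subsets of size $\le n$,'' but as a simplicial complex its vertices are automatically those $Q$ with $|Q|\le\min(n,\ell-n)$, and that is the set onto which $p$ is surjective; stating this explicitly would remove a moment of doubt when $\ell-n<n$. This is an elementary route to Lov\'asz's connectivity bound, independent of the oriented-matroid/Borsuk--Ulam and nerve-lemma methods that the paper uses for the stable Kneser graphs.
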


These establish $\chi(KG_{n,k})=k+2$ as conjectured by Kneser.

The proof of \prettyref{thm:lovasz-crit} uses the Borsuk--Ulam theorem.
This led B{\'a}r{\'a}ny to a simpler proof of $\chi(KG_{n,k})\ge k+2$,
which does not use any graph complexes but applies the Borsuk--Ulam
theorem more directly~\cite{barany78}.  This proof uses the existence
of certain generic configurations of vectors in~$\R^{k+1}$.  Using a specific configuration of this kind, Schrijver found an induced subgraph of $KG_{n,k}$, the graph $SG_{n,k}$, with the property that already $\chi(SG_{n,k})=k+2$~\cite{schrijver}.
\begin{defn}
The \emph {stable Kneser graph}
$SG_{n,k}$ is the induced subgraph of $KG_{n,k}$ on the vertex set
\[V(SG_{n,k})=\set{S\in V(KG_{n,k})\colon \text{$\set{i,i+1}\not\subset S$
for all $i\in \Z_{2n+k}$}}.\]
The vertices of~$SG_{n,k}$ are called \emph{stable subsets} of $\Z_{2n+k}$.
\end{defn}
Schrijver also proves that the graph $SG_{n,k}$ is vertex critical in
the sense that it becomes $(k+1)$-colourable if an arbitrary vertex is
removed.

A more systematic treatment of topological obstructions to the
existence of graph colourings was suggested by Lov{\'a}sz and started
by Babson and Kozlov~\cite{babson-kozlov-i}.  
For graphs $G$ and $H$, they define a cell complex
$\Hom(H,G)$.  The vertices of $\Hom(H, G)$ are the graph
homomorphisms from $H$ to~$G$.  They introduce the concept of a test graph.
\begin{defn}\label{def:homotopy-test-graph}
A graph~$T$ is a \emph{homotopy test graph} if for all loopless graphs $G$ and 
$r\ge0$ such the $\Hom(T, G)$ is $(r-1)$-connected the inequality
$\chi(G)\ge \chi(T)+r$ holds.
\end{defn}
Since the complex $\Hom(K_2, G)$ is homotopy equivalent to~$\nc(G)$,
\prettyref{thm:lovasz-crit} states that $K_2$ is a homotopy test
graph.  In~\cite{babson-kozlov-i} this result is extended to all
complete graphs and in \cite{babson-kozlov-ii} to odd cycles.  These
proofs show a graph $T$ to be a test graph by studying the spaces
$\Hom(T, K_n)$ and $C_2$-actions on them induced by a $C_2$-action on~$T$.
Here $C_2$ denotes the
cyclic group of order~$2$.
Indeed, for a graph~$T$ with an action of a group~$\Gamma$ one can
define the property of being a \emph{$\Gamma$-test graph}
(\prettyref{def:gamma-test-graph}), which implies being a homotopy test graphs,
and the homotopy test graphs mentioned above are shown to be $C_2$-test graphs. 

In \cite{hom-loop} easier proofs, in particular for odd cycles, are 
obtained by instead studying $\Hom(K_2, T)$ together
with two involutions, one induced by the non-trivial involution of $K_2$ 
and another by an 
involution on~$T$.  This also yielded the somewhat isolated result
that $KG_{2s,2}$ is a $C_2$-test graph.  All known test graphs at that
point were Kneser graphs or stable Kneser graphs, since $KG_{1,k}=SG_{1,k}$
is a complete graph on $k+2$ vertices and $SG_{n,1}$ is a cycle of
length~$2n+1$

In \cite{anton-cas} it was shown that test graphs~$T$ can be obtained
by constructing graphs~$T$ with prescribed topology of
$\Hom(K_2, T)$.

\subsection{Short overview}
We show how the construction by Bárány and Schrijver can be used to
obtain information on the complex $\Hom(K_2, SG_{n,k})$ and to
establish a close connection between the graphs $SG_{n,k}$ and the
Borsuk graphs of a $k$-sphere.  We modify the construction by Bárány
and Schrijver to make these results equivariant with respect to the
automorphism group of $SG_{n,k}$, $n>1$.  

We use the knowledge of the
equivariant topology of $\Hom(K_2, SG_{n,k})$ to show that the graphs
$SG_{2s,4}$ are homotopy test graphs.  This requires studying the
action of a cyclic group different from $C_2$ on the space $\Hom(K_2,
SG_{2s,4})$.  This distinguishes our result from previous proofs that
certain graphs are test graphs and necessitates the use of different
tools from algebraic topology.  

For $k\notin\set{0,1,2,4,8}$ and large~$n$ we show that $SG_{n,k}$ is
not a test graph.  This uses calculations in the $\Z_2$-cohomology
ring of the automorphism group of~$SG_{n,k}$, a dihedral group.  A
first example of a non-test graph was given by Hoory and
Linial~\cite{linial-test-graph}.  Their example happens to be a graph
for which the bound of \prettyref{thm:lovasz-crit} is not sharp.  On
the other hand, $\chi(SG_{n,k})=k+2$ and $\Hom(K_2, SG_{n,k})$ is
homotopy equivalent to a $k$-sphere and hence in some sense nicest
possible.  It turns out that for these graphs the property of being a
test graph depends on the way in which the automorphism group of the
graph acts on that sphere.  Also, for some values of~$k$ the
graph~$SG_{n,k}$ will fail to satisfy
\prettyref{def:homotopy-test-graph} only for large~$r$, for the
example in~\cite{linial-test-graph} it fails already for~$r=1$.

\subsection{Detailed overview and results}
The starting point of our present investigation is the realization that the
proofs of Kneser's conjecture by Lov{\'a}sz and by B{\'a}r{\'a}ny are more
closely related than by the common appearance of the Borsuk--Ulam
theorem.  We have not seen this connection made explicit in the
literature, even though Ziegler combined ideas from both proofs to
obtain a combinatorial proof of Kneser's
conjecture~\cite{z-inventiones} and even though the vertex criticality
of stable Kneser graphs had suggested that the neighbourhood complex
$\nc(SG_{n,k})$ is homotopy equivalent to a $k$-sphere, which was
proved by Björner and de Longueville~\cite{anders-mark}.

Indeed in the proof of \prettyref{thm:lovasz-crit} the assumption that
$\nc(G)$ be $(r-1)$-connected is used to establish the existence of an
equivariant map from an $r$-sphere to a certain deformation retract of
$\nc(G)$ that comes equipped with a $C_2$-action.  
Spheres are considered $C_2$-spaces via
the antipodal map.  This deformation retract is $C_2$-homotopy
equivalent to $\Hom(K_2, G)$.  On the other hand, B{\'a}r{\'a}ny's
construction can be used to produce a $C_2$-map
$\Sphere^k\to_{C_2}\Hom(K_2, KG_{n,k})$ as we will explain in
\prettyref{rem:barany}.  So even though B{\'a}r{\'a}ny's proof does
not mention graph complexes, consequences for them can be derived from
it.  Using the vector configuration from Schrijver's proof, one even
gets a very explicit map $\Sphere^k\to_{C_2}\Hom(K_2, SG_{n,k})$.  We
describe this map in \prettyref{sec:barany}.  It is not difficult to
see that this map is a homotopy equivalence, we show this in
\prettyref{sec:homotopy}.

Since all ``naturally occurring'' graphs which have so far been
identified as test graphs are stable Kneser graphs (the result for
$KG_{2s,4}$ can be derived from one for $SG_{2s,4}$), it is natural to
ask if more or even all stable Kneser graphs are test graphs.  In
\cite{hom-loop} it turned out that for proving a graph~$T$ to be a
test graph it is useful to not only understand $\Hom(K_2, T)$, but also
its automorphisms induced by automorphisms of~$T$.  The dihedral group
$D_{2m}$, symmetry group of the regular $m$-gon, $m=2n+k$,
acts naturally on
$SG_{n,k}$, so we would like to find a $D_{2m}$-action on $\Sphere^k$
and an equivariant map $\Sphere^k\to_{D_{2m}}\Hom(K_2, SG_{n,k})$.  At
first the map $\Sphere^k\to\Hom(K_2, SG_{n,k})$ constructed with the
help of the vector configuration occuring in Schrijver's proof seems
not to exhibit much symmetry.  But that vector configuration uses
points on the moment curve, and fortunately it turns out that
substituting the trigonometric moment curve for it essentially does
the trick.  This modified construction is carried out in
\prettyref{sec:action}.  Indeed, we define an orthogonal right action
of $D_{2m}$ on $\R^{k+1}$, denote $\R^{k+1}$ equipped with this
action by~$W_{n,k}$, and obtain:
\begin{thm*}[\ref{thm:op-on-box}]
Let $n\ge1$, $k\ge0$, and $m=2n+k$. There is a continuous map
$f\colon\Spherenk\to\real{\Hom(K_2, SG_{n,k})}$ which is equivariant
with respect to the actions of $C_2$ and~$D_{2m}$.
\end{thm*}
The $D_{2m}$-representation $W_{n,k}$ gives rise to a
$(k+1)$-dimensional vector bundle~$\xi_{n,k}$ over the classifying
space $\B D_{2m}$, \prettyref{def:xink}.  This in turn defines
Stiefel-Whitney classes $w_i(\xi_{n,k})\in H^i(D_{2m};\Z_2)$ and also
classes $\wbar_i(\xi_{n,k})\in H^i(D_{2m};\Z_2)$ by
$\left(\sum_{i\ge0}w_i\right)\left(\sum_{i\ge0}\wbar_i\right)=1$.  An
approach similar to that in \cite{hom-loop}, but with different
topological tools, then yields:
\begin{thm*}[\ref{cor:test-graph}]
Let $n,k\ge 1$, $m=2n+k$.  If $\wbar_r(\xi_{n,k})\ne0$ for all $r\ge1$, then
$SG_{n,k}$ is a $D_{2m}$-test graph and hence a homotopy test graph.
\end{thm*}
With the help of the calculation of the classes $w_i(\xi_{n,k})$ in
\prettyref{sec:calc-sw} we obtain:
\begin{thm*}[\ref{cor:n,2}, \ref{cor:2s,4}]
Let $n,k\ge0$.  If $k\in\set{0,1,2}$ or if $k=4$ and $n$~is even,
then $SG_{n,k}$ is a homotopy test graph.
\end{thm*}
In the case $k=4$, $n$ even, we cannot obtain the result by considering
a $C_2$-action on $SG_{n,4}$, but we show $SG_{n,4}$ to be a 
$\Gamma$-test graph, where $\Gamma$ is a cyclic group
whose order is the largest power of~$2$ which divides~$2n+4$.
This is the first case, where a group other than $C_2$ has been used
to prove a graph to be a test graph.
Consequently, this result could not have been obtained by a direct
application of the test graph criteria given in~\cite{hom-loop}.

To prove that some stable Kneser graphs are not test graphs, we need a
dual of~\ref{thm:op-on-box}.  To formulate it we use the notion of the
$\eps$-Borsuk graph of a sphere.  Given $\eps>0$ we define the
infinite graph $B_\eps(\Sphere^k)$ whose vertices are the points
of~$\Sphere^k$, with two vertices adjacent if they are antipodal up to
an error of~$\eps$, \prettyref{def:borsuk}.  Then 
\prettyref{thm:op-on-box}
is equivalent to the existence of equivariant graph homomorphisms
$B_\eps(\Sphere(W_{n,k}))\to_{D_{2m}}SG_{n,k}$ for small enough~$\eps$
as explained in \prettyref{rem:B-to-SG}.  We
also obtain the following:
\begin{thm*}[\ref{prop:eps}]
Let $k\ge 0$ and $\eps>0$.  Then for large enough~$n$ and $m=2n+k$
there is an equivariant graph homomorphism
\[SG_{n,k}\to_{D_{2m}} B_\eps(\Sphere(W_{n,k})).\]
\end{thm*}
This result is also dual to \ref{thm:op-on-box} in another way.  The
configuration of vectors on the trigonometric moment curve used in
both constructions realizes the alternating oriented matroid.  The
constuction of the map in \ref{thm:op-on-box} can be described in
terms of the covectors of the alternating oriented matroid.  The
result \ref{prop:eps} depends on properties of the vectors of the
alternating oriented matroid.

Using \ref{prop:eps} together with Schrijver's result that the stable
Kneser graphs are vertex critical and Braun's result~\cite{braun-sg}
that for $n>1$ the automorphism group of~$SG_{n,k}$ is the dihedral
group $D_{2m}$ we obtain:
\begin{thm*}[\ref{prop:not-test-graph}]
Let $k\ge1$.  Then there is an $N>0$ such that for all $n\ge N$ the
following holds:  If there is an $r>0$ such that 
$\wbar_r(\xi_{n,k})=0$ and  $r=1$ or $r\equiv 0\pmod 2$,
then $SG_{n,k}$ is not a homotopy test graph.
\end{thm*}
Here vertex criticality and knowledge of the
automorphism group come into play because of the following result.
\begin{thm*}[\ref{cor:crit-test}]
Let $T$ be a finite, vertex critical graph.  Then $T$~is a homotopy
test graph if and only if $T$ is an $\Aut(T)$-test graph.
\end{thm*}
This indicates that knowledge of the equivariant topology of a vertex
critical graph can lead to a prove that a graph is \emph{not} a
homotopy test graph.  We state a more directly applicable criterion
in \prettyref{thm:crit-test}.  

After more calculations we obtain from \ref{prop:not-test-graph}:
\begin{thm*}[\ref{cor:even-4-wbar}, \ref{cor:even-2-wbar}, \ref{cor:odd-wbar}]
Let $k\ge0$, $k\notin \set{0,1,2,4,8}$.  Then there is an $N>0$
such that for no $n\ge N$ the graph $SG_{n,k}$ is a 
homotopy test graph.

Also, there is an $N>0$ such that for no odd~$n\ge N$ the graph
$SG_{n,8}$ is a homotopy test graph.
\end{thm*}

\begin{acknowledgements}
I would like to thank Raman Sanyal for a helpful discussion regarding
the use of the trigonometric moment curve in \prettyref{sec:action}
and Rade Živaljević for pointing out to me \prettyref{thm:sw} and its
applicability in situations like those in \prettyref{sec:test-graphs}.
I am also grateful for various helpful comments I received on drafts of
this work.
\end{acknowledgements}

\section{Graphs and complexes}\label{sec:graph-complexes}
We introduce and fix notation for the basic objects that we study.
\subsection{The category of graphs}
A graph $G$ consists of a \emph{vertex set} $V(G)$ and a symmetric
binary relation $E(G)\subset V(G)\times V(G)$.  The relation is called
\emph{adjacency}, adjacent vertices are also called \emph{neighbours}
and elements of $E(G)$ \emph{edges}.  We also write $u\nbo v$ for
$(u,v)\in E(G)$.  A graph is called reflexive if the adjacency
relation is reflexive, i.e.\ if for every $v\in V(G)$ we have
$(v,v)\in E(G)$.  An edge of the form $(v,v)$ is called a loop.

A \emph{graph homomorphism} $f\colon G\to H$ is a function $f\colon
V(G)\to V(H)$ between the vertex sets, which preserves the adjacency
relation, $(f(u),f(v))\in E(H)$ for all $(u,v)\in E(G)$.  In the
following we will denote the category of graphs and graph
homomorphisms by~$\mathcal G$.

The graph~$\ug$ consisting of one vertex and a loop is a final object in
the category of graphs.  Any two graphs $G$ and $H$ have a product
$G\times H$ with 
\begin{align*}
V(G\times H)&=V(G)\times V(H),\\
E(G\times H)&=
\set{((u,u'),(v,v'))\colon 
 \text{$(u,v)\in E(G)$, $(u',v')\in E(H)$}}.
\end{align*}
For every graph $G$, the functor $\bullet\times G$ has a right adjoint
$[G,\bullet]$, i.e. there is a natural equivalence
\[
\mathcal{G}(Z\times G, H)\isom\mathcal{G}(Z, [G,H]).
\]
The graph $[G, H]$ is also written $H^G$ and can be realized by
\begin{align*}
V([G,H])&=V(H)^{V(G)},\\
E([G,H])&=\set{(f,g)\colon 
\text{$(f(u),g(v))\in E(H)$ for all $(u,v)\in E(G)$}}.
\end{align*}
In particular the graph homomorphisms from $G$ to~$H$ correspond to
the looped vertices of~$[G,H]$ in accordance with
\[\mathcal G(G, H)
\isom\mathcal G(\ug\times G, H)
\isom\mathcal G(\ug, [G, H]).
\]
\subsection{Complexes and posets}
For graphs $G$ and $H$ we define a poset
\begin{multline*}
\Hom(G, H)=\\
\set{f\in\left(\power(V(H))\wo\set\emptyset\right)^{V(G)}
\colon
\text{$f(u)\times f(v)\subset E(H)$ f.\ a.\ $(u,v)\in E(G)$}
}
\end{multline*}
with $f\le g$ if and only if $f(u)\subset g(u)$ for all $u\in V(G)$.
$\Hom$ is a functor from $\mathcal G^{\rm opp}\times\mathcal G$ to the
category of posets and order preserving maps.  $\Hom(G, H)$ is the
face poset of a cell complex first described in
\cite{babson-kozlov-i}.  The special case $\Hom(\mathbf1,H)$ is the poset
of cliques of looped vertices of~$H$.  The atoms of $\Hom(G, H)$ corrspond to the graph homomorphisms from $G$ to~$H$.  More is true:
There is a natural homotopy equivalence
\[\real{\Hom(G, H)}\homot\real{\Hom(\ug, [G, H])}\]
induced by a poset map which preserves atoms and with a homotopy
inverse of the same kind.  Also $\real{\Hom(\ug,\bullet)}$
preserves products up to such an equivalence,
see~\cite{anton-graph-cat}.  A possibly better categorical setup is
discussed in \cite[Sec.~7]{anton-cas}.

More formal
properties of $\Hom$ can be derived from the above facts, in particular
the existence of a map
\[\Hom(G, H)\times\Hom(H, Z)\to\Hom(G, Z)\]
which on atoms corresponds to composition of graph homomorphisms and
has all the expected properties.  Of course such a map is also easy to
write down explicitly, it was first used in~\cite{hom-loop}.

A graph complex that was mentioned in the introduction is the
\emph{neighbourhood complex $\nc (G)$} of a graph~$G$.  It is the
abstract simplicial complex consisting of sets of vertices of~$G$ with
a common neighbour.  It follows that the vertices of~$\nc (G)$ are the
non-isolated vertices of~$G$.  We will use in \prettyref{sec:homotopy}
that $\real{\nc(G)}\homot\real{\Hom(K_2, G)}$.

Another construction that we will use takes a poset $P$ (usually the
face poset of a cell complex) and assigns to it a reflexive graph
$P^1$.  The vertices of~$P^1$ are the atoms of~$P$, and to atoms are
adjacent in $P^1$, if and only if they have a common upper bound in
$P$.  This construction played in important role in~\cite{anton-cas}
and we will use several results from there.

\section{The B\'ar\'any-Shrijver construction}\label{sec:barany}

In this section we define the alternating oriented matroid
$C^{m,k+1}$, and for $m=2n+k$ a poset map $\mathcal
L(C^{m,k+1})\to\Hom(K_2,SG_{n,k})$ from the poset of its covectors to
the face poset of the box complex of a stable Kneser graph.  For a
configuration of $m$ vectors in $\R^{k+1}$ which realizes $C^{m,k+1}$,
this defines a map $\Sphere^k\to\real{\Hom(K_2,SG_{n,k})}$.  In 
\prettyref{sec:action} 
we will use specific vector configurations to obtain the
equivariant maps of \prettyref{thm:op-on-box},
which was mentioned in the introduction.

We define the alternating oriented matroid $C^{m,k+1}$, $m>k\ge0$, 
to be the
oriented matroid associated to the vector
configuration $v_0,\dots,v_{m-1}\in\R^{k+1}$ with
$v_j=(1,t_0,\dots,t_i^k)$ for some real numbers
$t_0<t_1<\dots<t_{m-1}$, see \cite[9.4]{oriented-matroids}.  
The set of non-zero covectors is 
\begin{multline*}
\mathcal
L(C^{m,k+1})=\set{(\sign\inprod x{v_0},\dots,\sign\inprod
  x{v_{m-1}})\colon x\in\R^{k+1}\wo\set0}
\\
\subset\set{-1,0,+1}^m.
\end{multline*}
We regard
$\mathcal L(C^{m,k+1})$ as a poset with the partial order induced by
the partial order on $\set{-1,0,+1}$ given by $s\le s'\iff
s=0\lor s=s'$.  The elements of $\mathcal L(C^{m,k+1})$
are exactly the sign vectors with at most $k$ sign changes.  By this
we mean those sign vectors obtained as $(\sign p(0),\dots,\sign
p(m-1))$ with $p$ a polynomial of degree~$k$.  The minimal elements,
called cocircuits, are those with exactly $k$ zeros (and hence a sign
change at every zero, to be interpreted as above).  We denote the set
of cocircuits by $\mathcal C^\ast(C^{m,k+1})$.

The set of covectors of the oriented matroid determines the set of
non-zero vectors 
\[\mathcal V(C^{m,k+1})
=\set{(\sign\lambda_0,\dots,\sign\lambda_{m-1})
\colon\text{$\lambda\in\R^m\wo\set0$, $\sum_i\lambda_i v_i=0$}
}
\]
of the matroid, which are not to be confused with the elements of the
vector configuration (one usually works with signed points in affine
$k$-space instead) that we started with.  It is the vectors that give
the alternating oriented matroid its name, it is not difficult to
check that $s\in\mathcal V(C^{m,k+1})$ if and only if there are
indices $i_0<\dots<i_{k+1}$ such that $s_{i_j}s_{i_{j+1}}=-1$ for
all~$j$; 
compare the proof of \cite[Prop.~9.4.1]{oriented-matroids}.

\begin{propdef}\label{def:C-to-Hom}
Let $n\ge1$, $k\ge0$ and $m=2n+k$.
For $s=(s_0,\dots,s_{m-1})\in\mathcal L(C^{m,k+1})$ let
sets $S_0(s),S_1(s)\subset\set{0,\dots,m-1}$ be defined by
\[S_l(s)=\set{j\colon (-1)^j s_j=(-1)^{l}}.\]
Then
\begin{align*}
\mathcal L(C^{m,k+1})&\to\Hom(K_2,SG_{n,k})\\
s&\mapsto\left(l\mapsto \set{T\in V(SG_{n,k})\colon T\subset S_l(s)}\right),
\end{align*}
with $V(K_2)=\set{0,1}$,
is a well-defined order preserving map.
\end{propdef}

\begin{rem}\label{rem:SG1k}
It is easy to check that for $n=1$ this map is an isomorphism
$\mathcal L(C^{k+2,k+1})\to\Hom(K_2,SG_{1,k})\isom\Hom(K_2, K_{k+2})$.
\end{rem}

\begin{proof}
Denote the map by $g$.  
For $s\in\mathcal L(C^{m,k+1})$ 
obviously $S_0(s)\intersect S_1(s)=\emptyset$,
so $(T_0,T_1)\in E(SG_{n,k})$ for all 
$T_l\in g(s)_l$.  We only have to check that $g(s)_l\ne\emptyset$,
and since $g$~is order preserving by construction, we can assume
that $s$~is a cocircuit.  But then $s$ contains exactly 
$k$~zeros.  Let $i_0<i_1<\dots<i_{2n-1}$ be the indices at which
$s$~is non-zero.  That $s$ has sign changes at exactly the zeros 
means that $s_{i_{j+1}}=(-1)^{i_{j+1}-i_j-1}s_{i_j}$, i.e.\
$(-1)^{i_{j+1}}s_{i_{j+1}}=-(-1)^{i_j}s_{i_j}$.  Therefore 
$S_0(s)$ and $S_1(s)$ are interleaved $n$-sets and
$S_0(s), S_1(s)\in V(SG_{n,k})$.
\end{proof}

\begin{propdef}\label{def:S-to-Hom}
Let $n\ge1$, $k\ge0$ and $m=2n+k$
and $(v_j)$ the vector configuration above or any
other vector configuration realizing $C^{m,k+1}$.

\sloppy
The covector poset $\mathcal L(C^{m,k+1})$ is the face poset of a cell
decomposition of $\Sphere^k$, where the open cell corresponding to
$s\in\mathcal L(C^{m,k+1})$ is $\set{x\in\Sphere^k\colon
\sign\inprod x{v_j}=s_j}$.  Therefore the poset map of
\prettyref{def:C-to-Hom} induces a continuous map
\[
f\colon\Sphere^k\to\real{\Hom(K_2,SG_{n,k})}.\]
If we write the group with $2$~elements as $C_2=\set{e,\tau}$ and
have $C_2$ operate via the antipodal map on $\Sphere^k$ and via
the isomorphism $\Aut(K_2)\isom C_2$ on $\Hom(K_2,SG_{n,k})$, then
the map~$f$ satisfies $f(\tau\cdot x)=\tau\cdot f(x)$ 
for all~$x\in\Sphere^k$.
\qed
\end{propdef}

\begin{rem}
\label{rem:barany}
B\'ar\'any's proof \cite{barany78}
of the Kneser conjecture $\chi(KG_{n,k})\ge k+2$
constructs a covering of $\Sphere^k$ by $s$~open sets, none of them
containing an antipodal pair of points, from a colouring $c\colon
KG_{n,k}\to K_s$ and then invokes the Borsuk--Ulam theorem to conclude
that $s\ge k+2$.  The covering is defined as
\[\left(
\Union_{S\in c^{-1}[\set j]}U_S
\right)_{j\in V(K_s)}
\text{\quad with\quad}
U_S=\set{x\in\Sphere_k\colon
   \text{$\inprod x{w_i}>0$ f.a.~$i\in S$}},
\]
where $(w_i)$ is a system of $m=2n+k$ vectors in~$\R^{k+1}$.  By
construction $x\in U_S$ and $-x\in U_T$ imply $S\intersect
T=\emptyset$, so no element of the covering does contain an antipodal
pair of points.  On the other hand, the vectors $w_i$ have to be
chosen appropriately so that the sets $U_S$, $S\in V(KG_{n,k})$ cover
the $k$-sphere.  B\'ar\'any uses a theorem by Gale to show that this
is possible.  Now such a vector configuration defines a hyperplane
arrangement in $\R^{k+1}$ and a cell complex which subdivides
$\Sphere^k$.  Let $X$ be this cell complex and $FX$ its face poset.
Then similar to \prettyref{def:C-to-Hom} we can define for a face $s$ and 
a point $x$ in its interior
$S_l(s)\deq\set{i\colon\sign\inprod x{w_i}=(-1)^l}$ and
\begin{align*}
FX &\to \Hom(K_2, KG_{n,k}),\\
s &\mapsto\left(l\mapsto\set{T\in V(KG_{n,k})\colon 
T\subset S_l(s)}\right).
\end{align*}
The two properties of the system $(U_S)$ above that make B\'ar\'any's
proof work also ensure that this is a well-defined poset map. It
induces an equivariant map $\Sphere^k\to_{C_2}\Hom(K_2, KG_{n,k})$.
This is the connection between B\'ar\'any's proof and Lov{\'a}sz' proof
that we mentioned in the introduction.

Schrijver \cite{schrijver} refined B\'ar\'any's construction by
choosing $w_i=(-1)^iv_i$ with $v_i$ on the moment curve as in the
beginning of this section.  With this choice already $(U_S)_{S\in
V(SG_{n,k})}$ is a covering of $\Sphere^k$.  Accordingly, we obtain
the map $\Sphere^k\to_{C_2}\Hom(K_2, SG_{n,k})$
of \prettyref{def:S-to-Hom}.

Ziegler \cite{z-inventiones} used the combinatorics of the cell
complex obtained by the hypersphere arrangement corresponding to the
vectors~$v_i$ to produce a combinatorial proof of $\chi(SG_{n,k})\ge
k+2$.  He does not mention graph complexes in this proof, but for a
given colouring $c\colon SG_{n,k}\to K_s$ defines a map which is
essentially the composition
\[\mathcal L(C^{m,k+1})\to \Hom(K_2, SG_{n,k})
 \xto{\Hom(K_2, c)} \Hom(K_2, K_s)\isom\mathcal L(C^{s,s-1}),\]
where the first map is our map of
\prettyref{def:C-to-Hom} and the isomorphism that of 
\prettyref{rem:SG1k}.
\end{rem}

\section{The action of the dihedral group}\label{sec:action}
\begin{defn}\label{def:groups-and-actions}
For $m\ge2$ let
\begin{equation*}
D_{2m}=\left\langle \sigma,\rho \,|\, \rho^2=\sigma^m=(\sigma\rho)^2=1
\right\rangle
\end{equation*}
denote the dihedral group with $2m$ elements.

For $m=2n+k$ we define a right $D_{2m}$ action on $KG_{n,k}$ by
\begin{align*}
S\cdot\sigma&=\set{j+1\colon j\in S},\\
S\cdot\rho&=\set{-j\colon j\in S},
\end{align*}
where all arithmetic is modulo~$m$.  The subgraph $SG_{n,k}$ is
invariant under this action.

We also set
\begin{equation*}
C_2=\left\langle \tau\,|\,\tau^2=1\right\rangle
\end{equation*}
and have this group act nontrivially on~$K_2$ from the right.

This defines a left $C_2$-action and a right $D_{2m}$-action on
$\Hom(K_2, SG_{n,k})$, and these commute.
\end{defn}

Our goal is to choose vectors $v_i$ in \prettyref{def:S-to-Hom} in
such a way that that map becomes $D_{2m}$-equivariant with respect to
an easy to define $D_{2m}$-action on~$\Sphere^k$.  It turns out that
the following definition achieves this.

\begin{defn}\label{def:Wnk}
Let $n\ge 1$, $k\ge0$, $m=2n+k$,
We define orthogonal actions on $\R^{k+1}$ as follows.  
First we set
\begin{align}\nonumber
\tau\cdot x&=-x.
\intertext{For $k=2r$ we set}
\label{eq:def-sigma-even}
x\cdot\sigma&=
-\diag(
1,
R_{2\pi/m},
R_{4\pi/m},
\dots,
R_{k\pi/m})\cdot x
\intertext{with}\nonumber
R_\phi&=\begin{pmatrix}
\cos\phi&-\sin\phi\\
\sin\phi&\cos\phi
\end{pmatrix}
\intertext{and}
\label{eq:def-rho-even}
x\cdot\rho&=
\diag(
1,
1,
-1,
\dots,
1,
-1)\cdot
x.
\intertext{For $k=2r+1$ we set}
\label{eq:def-sigma-odd}
x\cdot\sigma&=
-\diag(
R_{\pi/m},
R_{3\pi/m},
\dots,
R_{k\pi/m})\cdot
x
\intertext{and}
\label{eq:def-rho-odd}
x\cdot\rho&=
\diag(
1,
-1,
\dots,
1,
-1)\cdot
x.
\end{align}

We denote $\R^{k+1}$ equipped with the
orthogonal right action of $D_{2m}$ defined above by $W_{n,k}$.  The
unit sphere in $W_{n,k}$ is denoted by $\Sphere(W_{n,k})$.
\end{defn}

We will spend the rest of this section on the construction of a map
satisfying the following theorem.

\begin{thm}\label{thm:op-on-box}
Let $n\ge1$, $k\ge0$, and $m=2n+k$. There is a continuous map
$f\colon\Spherenk\to\real{\Hom(K_2, SG_{n,k})}$ which is equivariant
with respect to the actions of $C_2$ and~$D_{2m}$ defined above.
\end{thm}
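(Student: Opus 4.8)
The plan is to produce the map $f$ as the realization of a poset map, following the template already set up in \prettyref{def:C-to-Hom} and \prettyref{def:S-to-Hom}, and then to verify equivariance by a direct calculation on the vector configuration. The key observation is that the poset map $\mathcal L(C^{m,k+1})\to\Hom(K_2,SG_{n,k})$ of \prettyref{def:C-to-Hom} is purely combinatorial — it depends only on the abstract oriented matroid $C^{m,k+1}$, not on the chosen realizing vectors $v_0,\dots,v_{m-1}$. So if I can exhibit a vector configuration $(v_j)$ in $\R^{k+1}$ that (i) realizes $C^{m,k+1}$ and (ii) is compatible with the $D_{2m}$-action on $W_{n,k}$ in the sense that applying a group element to $\Sphere(W_{n,k})$ permutes the open cells of the induced hyperplane arrangement exactly as the corresponding automorphism of $SG_{n,k}$ permutes the sets $S_l(s)$, then realizing the poset map gives the desired equivariant $f$.

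Concretely, I would replace the moment curve with the \emph{trigonometric moment curve}: take $v_j = (-1)^j w_j$ where $w_j$ is the point on the trigonometric moment curve at angle $\theta_j = j\pi/m$ (or $2\pi j/m$, normalization to be pinned down), so that $\inprod{x}{v_j} = (-1)^j p_x(\theta_j)$ for a suitable trigonometric polynomial $p_x$ of the appropriate degree. The first task is to check (i): that these vectors realize the alternating oriented matroid $C^{m,k+1}$, i.e.\ that the sign vectors $(\sign\inprod{x}{v_0},\dots,\sign\inprod{x}{v_{m-1}})$ are exactly the sign vectors with at most $k$ sign changes. This is the classical fact that a trigonometric polynomial of degree $\le r$ has at most $2r$ real zeros on a period, matched against the $(-1)^j$ twist and the parity of $k$; it is a finite check but needs care in the even versus odd $k$ cases and with the boundary/cyclic wraparound. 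The second, and I expect \emph{main}, task is to check (ii): that the rotation-by-$2\pi/m$ structure of $W_{n,k}\cdot\sigma$ in \prettyref{eq:def-sigma-even} and \prettyref{eq:def-sigma-odd} corresponds, under the substitution, to the index shift $\theta_j\mapsto\theta_{j+1}$, and that the reflection $W_{n,k}\cdot\rho$ corresponds to $\theta_j\mapsto-\theta_j$, in each case with the correct sign bookkeeping so that $\sign\inprod{x\cdot\sigma}{v_j} = \sign\inprod{x}{v_{j-1}}$ and similarly for $\rho$. The extra $-1$ factors in front of the $\diag(\dots)$ matrices and the $(-1)^j$ in $v_j$ are exactly what make the parities work out, and getting these signs right — including why the block-diagonal form with rotation blocks $R_{2\pi/m}, R_{4\pi/m},\dots$ is forced by the action of $\sigma$ on the Fourier modes of $p_x$ — is the delicate heart of the argument.

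Once (i) and (ii) are established, the conclusion is formal. By (i) the poset $\mathcal L(C^{m,k+1})$ is the face poset of a cell decomposition of $\Sphere(W_{n,k})$ (the arrangement of the hyperplanes $\inprod{\cdot}{v_j}=0$), exactly as in \prettyref{def:S-to-Hom}; by (ii) the $D_{2m}$-action on $\Sphere(W_{n,k})$ is cellular and, on the level of face posets, intertwines with the $D_{2m}$-action on $\Hom(K_2,SG_{n,k})$ through the combinatorial poset map of \prettyref{def:C-to-Hom}. The $C_2$-equivariance is already recorded in \prettyref{def:S-to-Hom} and only needs to be rechecked to be compatible with the new vectors, which is immediate since $\tau$ acts by $x\mapsto-x$ and $\sign\inprod{-x}{v_j}=-\sign\inprod{x}{v_j}$ interchanges $S_0$ and $S_1$. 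Realizing the poset map then yields the continuous $(C_2\times D_{2m})$-equivariant map $f\colon\Spherenk\to\real{\Hom(K_2,SG_{n,k})}$, which is the assertion of \prettyref{thm:op-on-box}. I would organize the write-up by first stating the precise definition of the $v_j$, then a lemma that they realize $C^{m,k+1}$, then a lemma computing how $\sigma$ and $\rho$ act on the signs, and finally assembling these into the proof.
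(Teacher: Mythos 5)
Your overall strategy coincides with the paper's: pick vectors on the trigonometric moment curve, verify that they realize the alternating oriented matroid $C^{m,k+1}$, verify the equivariance relations between $\sigma$, $\rho$ and the vectors, and then realize the poset map of \prettyref{def:C-to-Hom} to obtain $f$. The paper's \prettyref{sec:action} does exactly this, with the two lemmas corresponding to your tasks (i) and (ii).

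There is, however, a sign-bookkeeping error that makes your task (i) false as stated. You set $v_j = (-1)^j w_j$ with $w_j$ on the trigonometric moment curve and then claim these $v_j$ realize $C^{m,k+1}$. They do not: it is the \emph{untwisted} $w_j$ that realize $C^{m,k+1}$ (these are exactly what the paper calls $v_j$ in \prettyref{eq:v-even} and \prettyref{eq:v-odd}), whereas reorienting the odd-indexed elements turns covectors with at most $k$ sign changes into sign vectors with at most $k$ sign \emph{stays}, which is a genuinely different labeled oriented matroid. The $(-1)^j$ twist is already built into the poset map of \prettyref{def:C-to-Hom} through the definition $S_l(s)=\set{j\colon(-1)^j s_j=(-1)^l}$, so putting it into the vectors as well double-counts. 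The fix is cheap and purely notational: either drop the $(-1)^j$ from the vectors and keep \prettyref{def:C-to-Hom} (this is the paper's choice; the equivariance relations you then need are $v_j\cdot\sigma=-v_{j+1}$ and $v_j\cdot\rho=v_{-j}$, with the extra minus sign absorbed by the $(-1)^j$ in $S_l$), or keep your twisted $v_j$ but replace \prettyref{def:C-to-Hom} by the untwisted assignment $S_l(s)=\set{j\colon s_j=(-1)^l}$ as in \prettyref{rem:barany}, in which case task (i) should read that the $v_j$ realize the reorientation of $C^{m,k+1}$ on the odd elements. Once the bookkeeping is made consistent, the two lemmas you sketch become precisely the paper's lemmas on realizing $C^{m,k+1}$ (via the degree bound on trigonometric polynomials and the derivative computation for simple zeros) and the verification of \prettyref{eq:v-props}.
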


To prove this theorem we will define vectors $v_j\in\R^{k+1}$, $j\in\Z$, such
that the system $(v_j)_{0\le j<m}$ realizes $C^{m,k+1}$ and that
additionally
\begin{equation}\label{eq:v-props}
\begin{aligned}
v_j\cdot\sigma=&-v_{j+1},
&
v_j\cdot\rho&=v_{-j},
&
v_{j+m}&=(-1)^m v_j
\end{aligned}
\end{equation}
for all~$j\in\Z$.  Then the expression $(-1)^jv_j$ is
well-defined for $j\in\Z_m$ and it follows that
\begin{align*}
(-1)^{j+1}\inprod{x\sigma}{v_{j+1}}
&=(-1)^j\inprod{x\sigma}{v_j\sigma}=(-1)^j\inprod x{v_j},
\\
(-1)^{-j}\inprod{x\rho}{v_{-j}}
&=(-1)^j\inprod{x\rho}{v_j\rho}=(-1)^j\inprod x{v_j},
\end{align*}
which shows that the map~$f$ arising from this 
vector configuration is $D_{2m}$-equivariant.

\subsection{The case of an even~$k$}
For $k=2r$ we identify $\R^{k+1}=\R\times\C^r$ and set
\begin{equation}\label{eq:v-even}
\begin{split}
v_j&=(1,\exp(2\pi ij/m),\exp(4\pi ij/m),\dots,\exp(2r\pi ij/m))
\\&=(1,\xi^j,\xi^{2j},\dots,\xi^{rj})
\end{split}
\end{equation}
with $\xi=\exp(2\pi i/m)$.  These vectors satisfy
\prettyref{eq:v-props}.

\begin{lem}
The system $(v_j)_{0\le j<m}$ of \prettyref{eq:v-even} realizes
$C^{m,k+1}$.
\end{lem}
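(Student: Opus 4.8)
The plan is to reduce the assertion to a statement about the signs of the maximal minors of the matrix whose rows are the $v_j$. Recall (see \cite[Sec.~9.4]{oriented-matroids}, where the Vandermonde determinant for the points $v_j=(1,t_j,\dots,t_j^k)$ on the moment curve is computed) that a configuration of vectors in $\R^{k+1}$ realizes $C^{m,k+1}$ if and only if for every tuple $0\le i_0<i_1<\dots<i_k<m$ the determinant $\det(v_{i_0},v_{i_1},\dots,v_{i_k})$ is nonzero and its sign does not depend on the chosen tuple (the sign itself is immaterial, chirotopes being defined only up to a global sign). So for $k=2r$ it suffices to prove that $\sign\det(v_{i_0},\dots,v_{i_{2r}})$ is independent of $0\le i_0<\dots<i_{2r}<m$.

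The key move is to pass from the trigonometric moment curve to Laurent monomials on the unit circle. Set $z_j=\xi^j=\exp(i\theta_j)$ with $\theta_j=2\pi j/m$, so that $0\le\theta_0<\theta_1<\dots<\theta_{m-1}<2\pi$. Under the usual identification $\C^r=\R^{2r}$ the vector $v_j$ has real coordinates $(1,\cos\theta_j,\sin\theta_j,\cos2\theta_j,\sin2\theta_j,\dots,\cos r\theta_j,\sin r\theta_j)$, and the identities $\cos a\theta=\tfrac12(z^a+z^{-a})$, $\sin a\theta=\tfrac1{2i}(z^a-z^{-a})$ show that $v_j=u_jM$, where $u_j=(z_j^{-r},z_j^{-r+1},\dots,z_j^{-1},1,z_j,\dots,z_j^r)\in\C^{2r+1}$ and $M\in\C^{(2r+1)\times(2r+1)}$ is a fixed invertible matrix independent of $j$ (invertible because $1,\cos a\theta,\sin a\theta$ and $z^{-r},\dots,z^r$ are two bases of the same $(2r+1)$-dimensional space of trigonometric polynomials).

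Consequently $\det(v_{i_0},\dots,v_{i_{2r}})=(\det M)\det(u_{i_0},\dots,u_{i_{2r}})$. In the determinant of the $u$'s I would pull $z_{i_a}^{-r}$ out of the $a$-th row to obtain a Vandermonde determinant:
\[
\det(u_{i_0},\dots,u_{i_{2r}})=\Bigl(\prod_a z_{i_a}^{-r}\Bigr)\prod_{a<b}(z_{i_b}-z_{i_a}).
\]
Now write $z_{i_b}-z_{i_a}=2i\exp\bigl(i(\theta_{i_a}+\theta_{i_b})/2\bigr)\sin\bigl((\theta_{i_b}-\theta_{i_a})/2\bigr)$ and use the telescoping identity $\sum_{a<b}(\theta_{i_a}+\theta_{i_b})/2=r\sum_a\theta_{i_a}$ (each index occurs in exactly $2r$ of the pairs): the accumulated phase $\prod_{a<b}\exp\bigl(i(\theta_{i_a}+\theta_{i_b})/2\bigr)$ then equals $\prod_a z_{i_a}^{r}$, which cancels the prefactor. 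What remains is $\det(u_{i_0},\dots,u_{i_{2r}})=(2i)^{\binom{2r+1}{2}}\prod_{a<b}\sin\bigl((\theta_{i_b}-\theta_{i_a})/2\bigr)$, and since $0<\theta_{i_b}-\theta_{i_a}<2\pi$ every sine is strictly positive. Thus $\det(v_{i_0},\dots,v_{i_{2r}})$ equals the fixed complex number $(2i)^{\binom{2r+1}{2}}\det M$ times a strictly positive real number. But $\det(v_{i_0},\dots,v_{i_{2r}})$ is real, being the determinant of a real matrix; hence $(2i)^{\binom{2r+1}{2}}\det M$ is a fixed nonzero real constant, and $\sign\det(v_{i_0},\dots,v_{i_{2r}})$ is the same for every tuple, as required.

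The part I expect to require the most care — the only step that is not essentially routine — is the phase bookkeeping inside the Vandermonde product: seeing that the half-angle phases telescope to exactly $\prod_a z_{i_a}^r$ and cancel the normalising factor, so that the tuple-dependence of the determinant is concentrated in a manifestly positive product of sines. The reduction to maximal minors, the change of basis $M$, and the final extraction of the sign are standard.
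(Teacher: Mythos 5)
Your proof is correct, and it takes a genuinely different route from the paper's. You work on the chirotope side: you reduce the assertion to the constancy of the sign of maximal minors, change basis from the real trigonometric moment curve to the Laurent monomials $(z^{-r},\dots,z^r)$, and then extract the sign from a Vandermonde determinant over the unit circle via the half-angle factorization $z_b-z_a=2i\,e^{i(\theta_a+\theta_b)/2}\sin\bigl((\theta_b-\theta_a)/2\bigr)$ and the telescoping of the phases. The paper instead works on the covector side: it takes a nonzero $x$, writes $2\langle x,v_j\rangle$ as $e^{-2\pi i r j/m}\,p(\xi^j)$ for a complex polynomial $p$ of degree $k$, concludes that a cocircuit has at most $k$ zeros (since $p$ has at most $k$ roots and the $\xi^j$ are distinct), and differentiates to see that the sign changes at each simple root. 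Both arguments exploit the same passage to Laurent polynomials, just dualized: you identify the chirotope directly, the paper identifies the cocircuits. Your argument has the minor advantage of making the alternating orientation completely explicit in one closed formula; the paper's argument has the advantage of matching, verbatim, the description of $\mathcal L(C^{m,k+1})$ given earlier in terms of sign changes of polynomial evaluations, which is the form actually used in \prettyref{def:C-to-Hom}. One small point worth making explicit in your write-up: since the $\xi^j$, $0\le j<m$, are distinct $m$-th roots of unity, the Vandermonde product $\prod_{a<b}(z_{i_b}-z_{i_a})$ is nonzero, which, together with $\det M\ne0$, is what guarantees the minors are nonzero and not merely of constant sign when nonzero.
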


\begin{proof}
It suffices to show that every cocircuit has at most $k$ zeros and
changes the sign at every zero.  
For $x=(x_0,x_1,\dots,x_r)\in\R\times\C^r$ we have
\begin{align*}
\inprod{x}{v_j}_\R
&=\frac12\left(x_r\xi^{-rj}+\dots+x_1\xi^{-j}
+2x_0+\overline{x_1}\xi^j+\dots+\overline{x_r}\xi^{rj}\right).
\end{align*}
Now if we consider the function
$h(t)\deq 2\inprod x{\exp(2\pi it/m)}_\R$ then
$2\inprod x{v_j}=h(j)$ and
\begin{align*}
h(t)&=\exp(-2\pi irt/m)p(\exp(2\pi it/m))
\intertext{with}
p(z)&=
x_r+x_{r-1}z+\dots+x_1z^{r-1}+2x_0z^r
+\overline{x_1}z^{r+1}+\dots+\overline{x_r}z^{2r}.
\end{align*}
Now $\inprod x{v_j}=0$ if and only if $p(\xi^j)=0$,
but $p$~is a polynomial of degree $k$ and $\xi^j\ne\xi^{j'}$
for $0\le j<j'<m$.
Also
\[h'(t)=2\pi i/m\exp(-2\pi irt/m)
\left(-rp(\exp(2\pi it/m))+p'(\exp(2\pi it/m))\right)\]
and therefore $h(t)$ changes the sign 
whenever $\exp(2\pi it/m)$ is a simple root of~$p$.
\end{proof}

\subsection{The case of an odd~$k$}
For $k=2r+1$ we identify $\R^{k+1}=\C^{r+1}$ and set
\begin{equation}\label{eq:v-odd}
\begin{split}
v_j&=(\exp(\pi j/m),\exp(3\pi j/m),\dots,\exp((2r+1)\pi j/m))
\\&=(\xi^j,\xi^{3j},\dots,\xi^{(2r+1)j})
\end{split}
\end{equation}
with $\xi=\exp(\pi i/m)$.
These vectors, too, satisfy \prettyref{eq:v-props}.

\begin{lem}
The system $(v_j)_{0\le j<m}$ of \prettyref{eq:v-odd} realizes
$C^{m,k+1}$.
\end{lem}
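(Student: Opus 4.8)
The plan is to imitate the proof of the case $k=2r$ essentially line by line: one reduces to a statement about cocircuits and then reads that statement off a polynomial identity. Exactly as there, it suffices to show that every cocircuit of the oriented matroid of $(v_j)_{0\le j<m}$ has at most $k$ zeros and changes the sign at every zero. Writing $\R^{k+1}=\C^{r+1}$ and $x=(x_0,\dots,x_r)$, I would first record
\[
2\inprod x{v_j}_\R=\sum_{l=0}^{r}\left(x_l\xi^{-(2l+1)j}+\overline{x_l}\xi^{(2l+1)j}\right),
\qquad\xi=\exp(\pi i/m),
\]
and introduce the real-valued function $h(t)\deq 2\inprod x{(\xi^{t},\xi^{3t},\dots,\xi^{(2r+1)t})}_\R$ of a real variable, so that $2\inprod x{v_j}=h(j)$. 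Multiplying by $\xi^{(2r+1)t}$ collects $h$ into a single polynomial in $w=\xi^{2t}=\exp(2\pi it/m)$,
\[
h(t)=\xi^{-(2r+1)t}\,p(\xi^{2t}),\qquad
p(w)=x_r+x_{r-1}w+\dots+x_0w^{r}+\overline{x_0}w^{r+1}+\dots+\overline{x_r}w^{2r+1},
\]
which has degree at most $k=2r+1$ and is nonzero whenever $x\ne0$.

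From this the zero bound is immediate: $\inprod x{v_j}=0$ if and only if $p(\xi^{2j})=0$, and as $j$ ranges over $\set{0,\dots,m-1}$ the values $\xi^{2j}=\exp(2\pi ij/m)$ run through the $m$ distinct $m$-th roots of unity, so at most $k$ of them are roots of $p$. Hence every covector, in particular every cocircuit, has at most $k$ zeros; this also shows that no $k+1$ of the $v_j$ lie on a hyperplane through the origin, so the matroid is uniform of rank $k+1$ and every cocircuit has exactly $k$ zeros.

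For the sign changes I would differentiate: $h'(t)$ is a nonzero scalar times $\xi^{-(2r+1)t}(2\xi^{2t}p'(\xi^{2t})-(2r+1)p(\xi^{2t}))$, which does not vanish when $\xi^{2t}$ is a simple root of $p$; so $h$ changes sign at every such point. Now let $s$ be a cocircuit with zero set $J$, so $|J|=k$. The $k$ distinct numbers $\xi^{2j}$, $j\in J$, are then roots of $p$, and since $p$ is nonzero of degree at most $k$ these are all of its roots and each is simple. As $h$ is real, continuous and satisfies $h(t+m)=-h(t)$, its zeros are exactly the integers congruent modulo $m$ to an element of $J$, and $h$ changes sign at each of them. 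Comparing the values of $h$ at consecutive nonzero positions $i_\nu<i_{\nu+1}$ of $s$, there are precisely $i_{\nu+1}-i_\nu-1$ sign changes in between, so $s_{i_{\nu+1}}=(-1)^{i_{\nu+1}-i_\nu-1}s_{i_\nu}$ --- which is the sign-change condition appearing in the proof of \prettyref{def:C-to-Hom}. Together with uniformity, this forces every cocircuit to agree, up to an overall sign, with the corresponding cocircuit of $C^{m,k+1}$; since cocircuits determine an oriented matroid, $(v_j)$ realizes $C^{m,k+1}$.

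The step I expect to take the most care is the last one. In contrast to the even case, where $h$ is $m$-periodic, here $h$ is only $2m$-periodic (and anti-periodic with period $m$), so one must check that all zeros of $h$ still lie at integer points and that $h$ has no extra sign change strictly between two consecutive nonzero positions of the cocircuit. Both follow from the observation that, for a cocircuit, every root of $p$ is a simple $m$-th root of unity.
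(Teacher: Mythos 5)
Your proof follows essentially the same route as the paper's: you introduce the same function $h(t)=2\inprod x{(\xi^t,\dots,\xi^{(2r+1)t})}_\R$, factor it as $\xi^{-(2r+1)t}p(\xi^{2t})$ with the same degree-$\le k$ polynomial $p$, bound the number of zeros by a degree/distinctness argument, and use $h'$ to see the sign change at simple roots. The added material at the end (the observation that $h$ is $m$-anti-periodic rather than $m$-periodic, and the remark that uniformity plus matching cocircuits determines the oriented matroid) is sound and spells out details the paper leaves implicit; it does not change the argument. Incidentally, your formula $h'(t)=\text{scalar}\cdot\xi^{-(2r+1)t}\bigl(2\xi^{2t}p'(\xi^{2t})-(2r+1)p(\xi^{2t})\bigr)$ is the correct one (the factor $\xi^{2t}$ in front of $p'$ is dropped in the paper, a harmless typo since it does not vanish).
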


\begin{proof}
Again, it suffices to show that every cocircuit has at most $k$ zeros and
changes the sign at every zero.  
For $x=(x_0,x_1,\dots,x_r)\in\C^{r+1}$ we have
\begin{align*}
\inprod{x}{v_j}_\R
&=\frac12\left(x_r\xi^{-(2r+1)j}+\dots+x_0\xi^{-j}
+\overline{x_0}\xi^j+\dots+\overline{x_r}\xi^{(2r+1)j}\right).
\end{align*}
Now if we consider the function
$h(t)\deq 2\inprod x{\exp(\pi it/m)}_\R$ then
$2\inprod x{v_j}=h(j)$ and
\begin{align*}
h(t)&=\exp(-(2r+1)\pi it/m)p(\exp(2\pi it/m))
\intertext{with}
p(z)&=
x_r+x_{r-1}z+\dots+x_0z^{r}
+\overline{x_0}z^{r+1}+\dots+\overline{x_r}z^{2r+1}.
\end{align*}
Now $\inprod x{v_j}=0$ if and only if $p(\xi^{2j})=0$,
but $p$~is a polynomial of degree $k$ and
$\xi^{2j}\ne\xi^{2j'}$ for $0\le j<j'<m$.
Also
\[h'(t)=\pi i/m\cdot\exp(-k\pi it/m)
\left(-k p(\exp(2\pi it/m))+2p'(\exp(2\pi it/m))\right)\]
and therefore $h(t)$ changes the sign 
whenever $\exp(2\pi it/m)$ is a simple root of~$p$.
\end{proof}
This concludes the proof of \prettyref{thm:op-on-box}.
\qed

\section{Homotopy}\label{sec:homotopy}
The results of this section are not needed in the rest of this paper.  

The construction of \prettyref{sec:barany} leads to a more conceptual
proof of the homotopy equivalence $\nc(SG_{n,k})\homot\Sphere^k$
of~\cite{anders-mark} and,
using the result of \prettyref{sec:action}, 
also of the following $D_{2m}$-equivariant
version.
\begin{thm}\label{thm:homotopy}
Let $n\ge1$, $k\ge0$ and $m=2n+k$.  Then
the map $f\colon\Spherenk\to\real{\Hom(K_2, SG_{n,k})}$ constructed in the
proof of \prettyref{thm:op-on-box} is a $D_{2m}$-homotopy equivalence.
\end{thm}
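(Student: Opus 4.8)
The plan is to show that $f$ is a weak $D_{2m}$-equivalence and then invoke equivariant Whitehead to upgrade this to an honest $D_{2m}$-homotopy equivalence, since both source and target have the $D_{2m}$-homotopy type of $D_{2m}$-CW complexes ($\Sphere(W_{n,k})$ is a smooth $D_{2m}$-manifold and $\real{\Hom(K_2,SG_{n,k})}$ is a finite $D_{2m}$-CW complex). By the equivariant Whitehead theorem it suffices to prove that for every subgroup $U\le D_{2m}$ the fixed-point map $f^U\colon\Sphere(W_{n,k})^U\to\real{\Hom(K_2,SG_{n,k})}^U$ is a (non-equivariant) homotopy equivalence.

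First I would dispose of the underlying non-equivariant statement, i.e.\ the case $U=1$. This amounts to showing that the map $f\colon\Sphere^k\to\real{\Hom(K_2,SG_{n,k})}$ of \prettyref{def:S-to-Hom}, induced by the poset map $\mathcal L(C^{m,k+1})\to\Hom(K_2,SG_{n,k})$ of \prettyref{def:C-to-Hom}, is a homotopy equivalence. The idea is to recognize this poset map as a composite: $\mathcal L(C^{m,k+1})$ is the face poset of a regular CW decomposition of $\Sphere^k$, so its order complex is homeomorphic to $\Sphere^k$; meanwhile $\real{\Hom(K_2,SG_{n,k})}\homot\real{\nc(SG_{n,k})}$. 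One then checks that the poset map restricts on cocircuits (the atoms/vertices of the sphere's barycentric subdivision) to something controllable, and that preimages of the relevant subposets of $\Hom(K_2,SG_{n,k})$ are contractible — a Quillen-fibre-lemma argument. Concretely, for a vertex $T\in V(SG_{n,k})$ the set of covectors $s$ with $T\subset S_l(s)$ for some $l$ corresponds to an intersection of closed hemispheres in $\Sphere^k$, hence is either empty or contractible; applying the nerve/fibre lemma over the cover of $\real{\Hom(K_2,SG_{n,k})}$ indexed by these vertex-stars shows $f$ is a homotopy equivalence. (This is the "conceptual proof of $\nc(SG_{n,k})\homot\Sphere^k$" promised in the text.)

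The main work is the fixed-point version. For each subgroup $U\le D_{2m}$ I would compute both sides. On the source: $\Sphere(W_{n,k})^U$ is the unit sphere in the $U$-fixed subspace $W_{n,k}^U$, whose dimension one reads off from the explicit block-diagonal description in \prettyref{def:Wnk} — the element $\sigma$ acts on each $\R$- or $\R^2$-block by a rotation times $\pm1$, and $\rho$ by a reflection, so $W_{n,k}^U$ is a sphere of some dimension $d(U)$ (possibly empty). On the target: $\real{\Hom(K_2,SG_{n,k})}^U$ is computed from the $U$-fixed subposet of $\Hom(K_2,SG_{n,k})$, i.e.\ the part of the box complex built from $U$-invariant families of stable sets; because $U$ acts on $\Z_m$ by a subgroup of the dihedral group, the $U$-invariant stable sets are governed by orbits of $U$ on $\Z_m$, and this reduces to a smaller stable-Kneser-type complex (or to a join of such). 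One then runs the same fibre-lemma argument $U$-fixed-pointedly: the covector-poset fixed subposet $\mathcal L(C^{m,k+1})^U$ is the face poset of the induced cell structure on $\Sphere(W_{n,k})^U$, the relevant fibres are again intersections of hemispheres inside that fixed sphere, hence contractible, and one concludes $f^U$ is a homotopy equivalence between spaces both homotopy equivalent to $\Sphere^{d(U)}$ (with the empty-set and disconnected cases handled separately).

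I expect the main obstacle to be the bookkeeping in this last step: identifying, for every conjugacy class of subgroups $U$ of $D_{2m}$ (cyclic subgroups of the rotation group, and the various reflection-containing dihedral subgroups), exactly which stable sets are $U$-invariant and thereby pinning down the homotopy type of $\real{\Hom(K_2,SG_{n,k})}^U$ and matching its dimension with $\dim W_{n,k}^U$. The equivariant Whitehead reduction and the fibre-lemma mechanics are routine; the care lies in not losing track of parity issues coming from the $-1$ factors in $\sigma$'s action and from the index-shift twist $S_l(s)=\set{j\colon(-1)^js_j=(-1)^l}$ relating covectors to box-complex cells.
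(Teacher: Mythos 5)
Your plan---equivariant Whitehead plus a fibre-lemma argument at each fixed-point level---is in substance what the paper does, but the paper packages it cleanly via a ready-made tool: the equivariant fibre lemma of Th\'evenaz and Webb (quoted as \prettyref{lem:equivariant-fibre}), which says that if $f\colon Q\to P$ is a $\Gamma$-equivariant poset map such that $|f^{-1}[[p,\top)]|$ is $\Gamma_p$-contractible for every $p\in P$, then $|f|$ is a $\Gamma$-homotopy equivalence. Its proof amounts exactly to the ``for every subgroup, apply non-equivariant Quillen'' reduction you describe, so invoking it spares you the enumeration of subgroups of $D_{2m}$ and the matching of fixed-sphere dimensions against fixed subposets of $\Hom(K_2,SG_{n,k})$; that bookkeeping you worry about is not needed at all, because the lemma's hypothesis is a single local condition about each fibre and its own stabilizer.

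The genuine gap is in the claim that the $U$-fixed fibres are ``intersections of hemispheres inside that fixed sphere, hence contractible.'' Convexity yields contractibility only once you know the intersection is \emph{non-empty}, and non-emptiness of the $U$-fixed part of a fibre is precisely the thing that needs proof---it is not automatic that a simplex $\mathcal S$ of $\nc(SG_{n,k})$ stabilized by $U$ has a $U$-fixed witness on the sphere. The paper supplies the missing averaging argument: $\bigcap_{S\in\mathcal S}U_S$ is a non-empty convex open set on which the stabilizer $\Gamma_{\mathcal S}$ acts by \emph{linear} maps, so the barycentre of any $\Gamma_{\mathcal S}$-orbit is a $\Gamma_{\mathcal S}$-fixed point and the straight-line contraction to it is $\Gamma_{\mathcal S}$-equivariant. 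This is the one non-formal input; without it your fixed-point fibre lemma could have an empty fibre to contract, and your assertion that $\mathcal L(C^{m,k+1})^U$ gives a cell structure on $\Sphere(W_{n,k})^U$ has the same problem (you must know each $U$-fixed covector corresponds to a cell that actually meets the fixed sphere). A further simplification worth adopting: the paper first passes to $\nc(SG_{n,k})$ via the $\Aut(G)$-equivalence $|\Hom(K_2,G)|\to|\nc(G)|$, $g\mapsto g(0)$; then the fibres become literally the sets $\bigcap_{S\in\mathcal S}U_S$, which is cleaner than working with $\Hom(K_2,SG_{n,k})$ directly.
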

Of course, this map is also a $C_2$-homotopy equivalence, since it is
$C_2$-equivariant and the $C_2$-actions on both spaces are free.
We do not see a an easy way to extend this to a proof of $(C_2\times
D_{2m})$-homotopy equivalence, however.

To not obfuscate the basic idea with technical details, we start with
a sketch of a proof of the weaker statement 
$\real{\nc(SG_{n,k})}\homot\Sphere^k$.
We take up the 
notation from \prettyref{rem:barany}
and observe the following.
\begin{prop}\label{prop:nerve}
Let $(v_i)$ be a system of vectors realizing $C^{m,k+1}$ and 
\begin{equation*}
U_S\deq\set{x\in\Sphere^k\colon\inprod{x}{(-1)^j v_j}>0\text{\ f.a. $j\in S$}}
\end{equation*}
for $S\subset\set{0,\dots,m-1}$.
Then 
the nerve of the system $(U_S)_{S\in V(SG_{n,k})}$ of open sets is the simplicial 
complex $\nc(SG_{n,k})$.
\end{prop}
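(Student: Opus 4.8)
The plan is to prove the two inclusions of simplicial complexes separately, using the combinatorics of the alternating oriented matroid. Write $w_j = (-1)^j v_j$; these are the vectors appearing in B\'ar\'any's covering. A set $F \subset \set{0,\dots,m-1}$ is a simplex of the nerve of $(U_S)_{S\in V(SG_{n,k})}$ iff there is a common point $x \in \Sphere^k$ with $\inprod{x}{w_j}>0$ for all $j$ lying in some vertex $S\in F$; and $F$ is a simplex of $\nc(SG_{n,k})$ iff there is a stable $n$-set $T$ that is a neighbour of every $S\in F$, i.e.\ $T$ is disjoint from $\Union F$. So the statement reduces to the equivalence, for a union $A = \Union F$ of stable $n$-sets: there exists $x\in\Sphere^k$ with $\inprod{x}{w_j}>0$ for all $j\in A$ \emph{if and only if} there is a stable $n$-set disjoint from $A$. (One must also check the degenerate bookkeeping: that the vertices of the nerve — the nonempty $U_S$ — are exactly the non-isolated vertices of $SG_{n,k}$; but every vertex of $SG_{n,k}$ has a neighbour since $SG_{n,k}$ has chromatic number $k+2>1$, so all vertices are non-isolated, and $U_S\ne\emptyset$ for every stable $n$-set $S$ follows from the ``if'' direction applied to $A=S$.)

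First I would handle the easy direction. Suppose $x\in\Sphere^k$ witnesses that $F$ is a simplex of the nerve; let $A=\Union F$. Consider the sign vector $s=(\sign\inprod{x}{v_0},\dots,\sign\inprod{x}{v_{m-1}})\in\mathcal L(C^{m,k+1})$, and refine it downward to a cocircuit $s'\le s$ (every covector dominates a cocircuit). By the proof of Proposition~\ref{def:C-to-Hom}, the two sets $S_0(s'),S_1(s')$ defined by $S_l(s')=\set{j\colon(-1)^js'_j=(-1)^l}$ are interleaved stable $n$-sets. Now $\inprod{x}{w_j}>0$ means $(-1)^j\sign\inprod{x}{v_j}=+1$, i.e.\ $j\in S_0(s)$; since $s'\le s$ and $s'$ has sign changes exactly at its zeros, one of $S_0(s'),S_1(s')$ — namely $S_1(s')$, the one built from the sign-$(-1)$ positions — is disjoint from $S_0(s)\supseteq A$. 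Hence $S_1(s')$ is a stable $n$-set disjoint from $A$, so $F\in\nc(SG_{n,k})$. The main content is the converse: given a stable $n$-set $T$ disjoint from $A=\Union F$, I must produce a single point $x$ with $\inprod{x}{w_j}>0$ for all $j\in A$. The natural candidate is a point $x$ whose sign vector is (a refinement of) the cocircuit associated to $T$ via Proposition~\ref{def:C-to-Hom}, or rather its complementary partner. Concretely: since $T$ is a stable $n$-set, $\set{0,\dots,m-1}\setminus T$ has the complementary stable $n$-set structure, and one can pick a cocircuit $s'$ of $C^{m,k+1}$ with $S_1(s')=T$; then $S_0(s')$ is the interleaved stable $n$-set, and $S_0(s')\cap T=\emptyset$, but we need $A\subseteq S_0(s')$, which need not hold directly since $A$ can be larger than any single stable $n$-set.

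The resolution — and the step I expect to be the main obstacle — is that one should not insist $x$ sit on a cocircuit cell; one should use an interior point of a cell of the arrangement whose positive set contains all of $A$. The cleanest argument: because $T$ is disjoint from $A$, and any polynomial sign pattern with few sign changes is achievable, choose a polynomial $p$ of degree $\le k$ that is strictly negative at the $n$ points $\set{t_j\colon j\in T}$ and strictly positive at the $|A|$ points $\set{t_j\colon j\in A}$ — this is possible because $T\cap A=\emptyset$ and a degree-$k$ polynomial can realize any sign vector with at most $k$ sign changes, and the constraint ``negative on $T$, positive on $A$'' can be met since the $t_j$'s for $j\in T$ are interspersed among those of $A\cup\set{0,\dots,m-1}$ in a controlled way (here is where the \emph{stability} of $T$ is used: $T$ being stable means its complement contains an interleaved $n$-set, which limits how $T$ separates the remaining indices, keeping the number of required sign changes at most $k$). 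Setting $s_j=\sign p(t_j)$ gives a covector $s$ with $s_j=(-1)^{?}$... more precisely, translating to $w_j=(-1)^jv_j$: I want $\inprod{x}{w_j}>0$, i.e.\ $(-1)^j\sign\inprod{x}{v_j}=+1$. Writing the polynomial argument in terms of the $w_j$ directly (the $w_j$ realize the same matroid up to reorientation), the existence of such an $x$ is exactly Schrijver's covering property — that $(U_S)_{S\in V(SG_{n,k})}$ covers $\Sphere^k$ — applied cell by cell. So the proof ultimately invokes, or reproves in this language, the fact from \prettyref{rem:barany} that Schrijver's sets $U_S$ indexed by \emph{stable} sets already cover the sphere; the nerve computation is then a matching of the cells of the arrangement with the faces of $\nc(SG_{n,k})$, and the verification that no inclusions are lost is the routine-but-careful oriented-matroid bookkeeping.
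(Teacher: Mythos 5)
Your translation of the statement and the direction ``simplex of the nerve $\Rightarrow$ simplex of $\nc(SG_{n,k})$'' are correct and agree with the paper: refine the sign vector $s$ of a common point $x$ downward to a cocircuit $s'\le s$; then $S_1(s')\subseteq S_1(s)$ is a stable $n$-set disjoint from $S_0(s)\supseteq A$, where $A=\Union F$.

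The converse, however, is not actually proved in your sketch, and this is the substantive half of the proposition. You correctly observe that a cocircuit cell cannot work (its positive side has only $n$ elements while $A$ may have up to $n+k$), and you correctly sense that stability of $T$ is what controls the number of sign changes. But you never write down a candidate sign vector on the whole index set $\set{0,\dots,m-1}$, and you never establish that it has at most $k$ sign changes; instead you assert that a suitable polynomial ``is possible'' and defer the sign-change bound to ``routine-but-careful oriented-matroid bookkeeping''. The closing appeal to Schrijver's covering property is also off: that every point of $\Sphere^k$ lies in some $U_S$ is strictly weaker than the claim that $\Intersection_{S\in F}U_S\ne\emptyset$ whenever a common stable neighbour $T$ exists, and does not imply it. There is moreover a persistent conflation of the $v_j$'s with the reoriented $w_j=(-1)^jv_j$: the ``at most $k$ sign changes'' description of covectors applies to $(\sign\inprod x{v_j})_j$, not to $(\sign\inprod x{w_j})_j$, so ``positive on $A$, negative on $T$'' is not the sign pattern you actually need to achieve.

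What is missing is the explicit one-line construction the paper uses. Given a stable $T\in V(SG_{n,k})$ disjoint from $A$, set $s_j=(-1)^j$ for $j\notin T$ and $s_j=(-1)^{j+1}$ for $j\in T$. Then $S_0(s)=\set{0,\dots,m-1}\setminus T\supseteq A$, and $s_js_{j+1}=-1$ exactly when $j,j+1\notin T$; because $T$ is a cyclically stable $n$-set in $\Z_m$ with $m=2n+k$, the complement of $T$ in $\set{0,\dots,m-1}$ breaks into at least $n$ linear runs totalling $n+k$ elements, hence has at most $k$ consecutive pairs. Thus $s$ has at most $k$ sign changes, $s\in\mathcal L(C^{m,k+1})$, and a point $x\in\Sphere^k$ realizing $s$ exists, giving $x\in\Intersection_{S\in F}U_S$. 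It is precisely this explicit choice of $s$ and the resulting count that your proposal leaves out.
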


\begin{proof}
Let $\emptyset\ne\mathcal S\subset V(SG_{n,k})$.  We have to show that
$\Intersection_{A\in\mathcal S} U_S\ne\emptyset$ if and only if there
is a $T\in V(SG_{n,k})$ which is a common neighour of all~$S\in\mathcal
S$, i.e.\ such that $T\intersect\Union\mathcal S=\emptyset$.

Assume that there is an $x\in\Intersection_{S\in\mathcal S}
U_S$.  As we have seen in \prettyref{def:C-to-Hom} there
is a $T\in V(SG_{n,k})$ such that $\inprod x{(-1)^j v_j}<0$ for
all~$j\in T$ and therefore $T\intersect\Union\mathcal S=\emptyset$.

Now assume that there is a $T\in V(SG_{n,k})$ such that
$T\intersect\Union\mathcal S=\emptyset$.  Then we can set
\[s_j=\begin{cases}(-1)^j,&j\notin T,\\(-1)^{j+1},&j\in T.\end{cases}\]
The sign vector~$s$ has a sign change exactly for every $j$ with
$j,j+1\notin T$ and therefore at most $k$ of them, which means that
$s\in\mathcal L(C^{m,k+1})$.  Hence there is an $x\in\Sphere^k$ such
that $\sign\inprod x{v_j}=s_j$ for all~$j$.  In particular
$\inprod x{(-1)^jv_j}>0$ for all $j\in S\in\mathcal S$ and therefore
$x\in\Intersection_{A\in\mathcal S} U_S$.
\end{proof}
We have already noted in \prettyref{rem:barany} that $\Union_{S\in
  V(SG_{n,k})}U_S=\Sphere^k$.  Also, every intersection of sets $U_S$
is convex and hence contractible if non-empty.  By the nerve lemma, it
follows that $\real{\nc(SG_{n,k})}\homot\Sphere^k$.  To prove
\prettyref{thm:homotopy}, it remains to show that we not only have a
homotopy equivalence, but a $D_{2m}$-homotopy equivalence, and that
the map that we have constructed earlier is such an equivalence.  For
this, we replace the nerve lemma by an equivariant fibre lemma.  We
use the following from~\cite{equivariant-fibre}.

\begin{lem}[{\cite[Thm.~1]{equivariant-fibre}}]\label{lem:equivariant-fibre}
Let $\Gamma$ be a group and
$f\colon Q\to P$ an equivariant map between right
$\Gamma$-posets.  Assume that
for each $p\in P$ the space $\real{f^{-1}[[p,\top)]}$ is 
$\Gamma_p$-contractible,
where $[p,\top)=\set{p'\in P\colon p'\ge p}$ and 
$\Gamma_p=\set{\gamma\in\Gamma\colon p\gamma=p}$ is the stabilizer of~$p$.
Then $\real f\colon \real Q\to\real P$ is a $\Gamma$-homotopy
equivalence.
\qed
\end{lem}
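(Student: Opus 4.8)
The plan is to reduce this to the ordinary (non-equivariant) Quillen fibre lemma by passing to fixed point subcomplexes and then invoking the equivariant Whitehead theorem. Recall that for a discrete group $\Gamma$ a $\Gamma$-map between $\Gamma$-CW complexes is a $\Gamma$-homotopy equivalence as soon as its restriction to the $H$-fixed point subspace is a homotopy equivalence for every subgroup $H\le\Gamma$. First I would record that the order complex of a poset with an order-preserving $\Gamma$-action is a $\Gamma$-CW complex of a particularly benign kind: an order-preserving bijection that sends a chain to itself must fix that chain pointwise, so no element of $\Gamma$ can non-trivially permute the vertices of a simplex of $\real P$ or of $\real Q$. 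Hence the $\Gamma$-action is \emph{regular}, the $H$-fixed subspace of $\real P$ is the order complex of the subposet $P^H=\set{p\in P\colon ph=p\text{ for all }h\in H}$, and likewise for $Q$; and since $f$ is equivariant, $f$ restricts to an order-preserving map $f^H\colon Q^H\to P^H$ whose realization is $(\real f)^H$.

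It therefore suffices to prove that $\real{f^H}$ is a homotopy equivalence for every subgroup $H\le\Gamma$, and for this I would apply Quillen's fibre lemma in its up-set form: a poset map $g\colon Q'\to P'$ with $\real{g^{-1}[[p,\top)]}$ contractible for every $p\in P'$ induces a homotopy equivalence $\real g\colon\real{Q'}\to\real{P'}$. Fix $p\in P^H$; its up-set in $P^H$ is $P^H\intersect[p,\top)$, and because $q\in Q^H$ forces $f(q)\in P^H$ (as $f(q)h=f(qh)=f(q)$), the preimage of this up-set under $f^H$ is exactly $Q^H\intersect f^{-1}[[p,\top)]=\bigl(f^{-1}[[p,\top)]\bigr)^H$, whose realization, by the first paragraph, is $\bigl(\real{f^{-1}[[p,\top)]}\bigr)^H$. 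Now $p\in P^H$ means $H\le\Gamma_p$, and $\Gamma_p$ acts on the subposet $f^{-1}[[p,\top)]$ because $f(q)\ge p$ and $\gamma\in\Gamma_p$ give $f(q\gamma)=f(q)\gamma\ge p\gamma=p$. By hypothesis $\real{f^{-1}[[p,\top)]}$ is $\Gamma_p$-contractible; applying the $H$-fixed point functor, which carries $\Gamma_p$-homotopy equivalences to homotopy equivalences since $H\le\Gamma_p$, shows $\bigl(\real{f^{-1}[[p,\top)]}\bigr)^H$ is contractible. So the hypothesis of Quillen's fibre lemma holds for $f^H$, the map $\real{f^H}$ is a homotopy equivalence, and running over all $H$ and applying the equivariant Whitehead theorem finishes the proof.

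Assembled from these ingredients the argument is short, and I expect the only point demanding genuine attention to be the compatibility of geometric realization with the $H$-fixed point functor — that is, the observation that an order-preserving group action on a poset never permutes the vertices of a simplex of the order complex. This is precisely what allows the reduction to the non-equivariant statement to be carried out subgroup by subgroup, and it is also why this lemma is so much more convenient than one would expect for general simplicial $\Gamma$-complexes, where one would first have to pass to a barycentric subdivision to make the action regular. For the applications in this paper $\Gamma$ is finite and acts by order-preserving automorphisms, so no such subdivision is needed.
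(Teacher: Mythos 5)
Your argument is correct and complete: the reduction to fixed-point subposets works precisely because an order-preserving bijection that maps a chain to itself must fix it pointwise, so $(\real P)^H=\real{P^H}$ and the equivariant Whitehead theorem applies, while Quillen's fibre lemma handles each $f^H$ since $p\in P^H$ forces $H\le\Gamma_p$ and $\Gamma_p$-contractibility of the fibre passes to $H$-fixed points. The paper itself offers no proof of this lemma --- it is quoted verbatim from Th\'evenaz and Webb with a bare citation --- and your argument is essentially the one given in that reference, so there is nothing further to reconcile.
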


We will apply it in the following form.

\begin{cor}\label{cor:equivariant-fibre}
Let $X$ be a completely regular finite cell complex $X$ with a right
action of a group $\Gamma$.  Let $P$ be a poset with a
right $\Gamma$-action.  Let $f\colon FX\to P$ be a $\Gamma$-equivariant
poset map from the
face poset of~$X$ to~$P$.  Denote by $\tilde f\colon X\to P$ the function 
which maps every point in the open cell $c$ to $f(c)$.
If for every $p\in P$ and $\Gamma_p$ the stabilizer of~$p$ the 
open set $\tilde f^{-1}[[p,\top)]$ is $\Gamma_p$-contractible,
then the map $\real f\colon \real X\to\real P$ is a $\Gamma$-homotopy
equivalence.
\end{cor}

\begin{proof}
Since $X$ is completely regular, it is homeomorphic to its barycentric
subdivision~$\real{FX}$.  This is how $f$ defines a map $\real f\colon
\real X\to\real P$.  To reduce the corollary to the lemma, it suffices
to show that 
$\tilde f^{-1}[[p,\top)]\homot_{\Gamma_p}\real{f^{-1}[[p,\top)]}$ 
for each~$p$.  We fix $p$ and set $R=f^{-1}[[p,\top)]$.  Now 
$\tilde f^{-1}[[p,\top)]$, seen as subset of the barycentric subdivision
of $X$, is the union of all open simplices with at least one vertex in~$R$.
Let $q_0<\dots<q_s$ be the vertices of such a simplex and 
$j=\min\set{i\colon q_i\in R}$.  Then
\begin{align*}
\left(\real{\set{q_0,\dots,q_s}}\intersect\tilde f^{-1}[[p,\top)]\right)
\times I &\to\real{\set{q_0,\dots,q_s}}\\
\left(\sum_i\lambda_i q_i,t\right)
&\mapsto
(1-t)\sum_{i}\lambda_i q_i
+t\left(\sum_{i\ge j}\lambda_i\right)^{-1}\sum_{i\ge j}\lambda_i q_i
\end{align*}
is a strong deformation retraction to~$\real{q_j,\dots,g_s}$.
These maps fit together to define a $\Gamma_p$-equivariant
strong deformation retraction
from $\tilde f^{-1}[[p,\top)]$ to $\real{f^{-1}[[p,\top)]}$.
\end{proof}

\begin{proof}[Proof of \prettyref{thm:homotopy}]
We denote by $\nc(G)$ the neighbourhood complex of a graph~$G$ 
or the face poset of this complex.  It is well-known that
the poset map 
\begin{align*}
\Hom(K_2, G)&\to\nc(G)\\
g&\mapsto g(0)
\end{align*}
induces a homotopy equivalence 
$\real{\Hom(K_2, G)}\to\real{\nc(G)}$, see \cite{csorba-thesis}.  
All steps in the proof given in~\cite[Ex.~3.3]{c5} are natural with respect
to automorphisms of~$G$, hence it is an
$\Aut(G)$-homotopy equivalence.
It will therefore suffice to show that the map
\begin{align*}
h\colon\mathcal L(C^{m,k+1})&\to\nc(SG_{n,k})\\
s&\mapsto\set{S\in V(SG_{n,k})\colon s_j=(-1)^j\text{ for all $j\in S$}}
\end{align*}
with $m=2n+k$ induces a $D_{2m}$-homotopy equivalence $\real
h\colon\Spherenk\to\real{\nc(SG_{n,k})}$.  Let $\mathcal S\in \mathcal
N(SG_{n,k})$.  Then, with notation as in 
\prettyref{cor:equivariant-fibre},
\begin{align*}
\tilde h^{-1}[[\mathcal S,\top)]
&=
\set{x\in\Spherenk\colon (-1)^j\inprod x{v_i}> 0\text{ for all
    $j\in\Union\mathcal S$}}
\\
&=\Intersection_{S\in\mathcal S}U_S,
\end{align*}
where the vectors $v_i$ are those used in the proof of 
\prettyref{thm:op-on-box}
and the sets $U_S$ are as in \prettyref{prop:nerve}.  We have seen
there that $\Intersection_{S\in\mathcal S}U_S\ne\emptyset$.  Let
$\Gamma_{\mathcal S}=\set{\gamma\in D_{2m}\colon \mathcal S\cdot
  \gamma=\mathcal S}$.  The group $\Gamma_{\mathcal S}$ acts by linear
maps on the convex set $\Intersection_{S\in\mathcal S}U_S$.
Consequently, the barycentre of any orbit of~$\Gamma_{\mathcal S}$ in
$\Intersection_{S\in\mathcal S}U_S$ is a fixed point of
$\Gamma_{\mathcal S}$, and $\Intersection_{S\in\mathcal S}U_S$
contracts equivariantly to it.  It follows from
\prettyref{cor:equivariant-fibre} that $\real h$ is a
$D_{2m}$-homotopy equivalence.
\end{proof}

\section{Groups and bundles}
The approach to show that for certain $n$, $k$ the graph $SG_{n,k}$ is
a homotopy test graph is similar to that used for odd cycles ($k=1$)
in \cite{hom-loop}.  If $\Hom(SG_{n,k}, G)$ is $(r-1)$-connected, then
there exists an equivariant map $\E_r D_{2m}\to_{D_{2m}}\Hom(SG_{n,k},
G)$.  In \prettyref{thm:op-on-box} we have constructed a map
$\Spherenk\to_{D_{2m}}\to_{D_{2m}}\Hom(K_2, SG_{n,k})$.  It is known
that $\Hom(K_2, K_{k+r+1})\homot_{C_2}\Sphere^{k+r-1}$.  So if
$\chi(G)<k+r+2$, then we obtain a map
\begin{multline*}
\Spherenk\times_{D_{2m}}\E_r D_{2m}
\to_{C_2}
\Hom(K_2, SG_{n,k})\times_{D_{2m}}\Hom(SG_{n,k}, G)
\\\to_{C_2}
\Hom(K_2, G)
\to_{C_2}
\Hom(K_2, K_{k+r+1})\homot_{C_2}\Sphere^{k+r-1}.
\end{multline*}
We will identify obstructions to the existence of such maps, compare
\prettyref{prop:test-graph}.
Later
we will see that under some circumstances the vanishing of one of
these obstructions implies that $SG_{n,k}$ is not a test graph.  In
this section we collect the topological tools that we will use.

\subsection{Cohomology of $D_{2m}$}\label{sec:cohomology}
We fix some notation and collect some facts regarding the cohomology
of the groups that we will use.

We recall
\begin{align*}
D_{2m}&=\left\langle \sigma,\rho \,|\, \rho^2=\sigma^m=(\sigma\rho)^2=1
\right\rangle,
\\
C_m&=\left\langle \sigma\,|\,\sigma^m=1\right\rangle.
\end{align*}

The calculation of the cohomology groups with coefficients in~$\Z_2$ of these
groups can be reduced to the case where $m$~is a power of~$2$.
\begin{lem}\label{lem:tm}
Let $m,t\ge1$ and $t$ odd.  Then the group monomorphisms $C_m\to
C_{tm}$ and $D_{2m}\to D_{2tm}$ given by $\sigma\mapsto\sigma^t$, and
$\rho\mapsto\rho$ for the second monomorphism, induce isomorphisms
$H^\ast(C_{m};\Z_2)\xto\isom H^\ast(C_{tm};\Z_2)$ and
$H^\ast(D_{2m};\Z_2)\xto\isom H^\ast(D_{2tm};\Z_2)$.
\end{lem}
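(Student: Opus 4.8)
The plan is to use that group cohomology with $\Z_2$ coefficients ignores the odd part of a finite group: it is detected on any Sylow $2$-subgroup, and dividing out an odd-order normal subgroup leaves it unchanged. For each of the two families I would first reduce to the case in which the smaller group is a Sylow $2$-subgroup of the larger one, and then establish that case directly.

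\emph{Reduction.} It suffices to prove the lemma when $m$ is a power of~$2$. Write $m=2^am'$ with $m'$ odd. In $D_{2tm}$, whose rotation $\sigma$ has order $tm=2^a(tm')$, the subgroup $P=\langle\sigma^{tm'},\rho\rangle$ has order~$2^{a+1}$ and is therefore a Sylow $2$-subgroup; it lies inside the copy $\langle\sigma^t,\rho\rangle$ of $D_{2m}$, where it is again a Sylow $2$-subgroup. Restriction along $P\subseteq D_{2m}\subseteq D_{2tm}$ factors the map $H^\ast(D_{2tm};\Z_2)\to H^\ast(P;\Z_2)$ through $H^\ast(D_{2m};\Z_2)$, and both $H^\ast(D_{2tm};\Z_2)\to H^\ast(P;\Z_2)$ and $H^\ast(D_{2m};\Z_2)\to H^\ast(P;\Z_2)$ are instances of the lemma with smaller group the $2$-group $D_{2^{a+1}}$ (the odd indices being $tm'$ and~$m'$). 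If those are isomorphisms then so is $H^\ast(D_{2tm};\Z_2)\to H^\ast(D_{2m};\Z_2)$. The identical bookkeeping, with $\langle\sigma^{m'}\rangle$ in place of $\langle\sigma^{m'},\rho\rangle$, handles the cyclic groups. So from now on $m=2^a$, the group $C_m$ is the Sylow $2$-subgroup of $C_{tm}=C_{2^at}$, and $D_{2m}=D_{2^{a+1}}$ is a Sylow $2$-subgroup of $D_{2tm}=D_{2^{a+1}t}$.

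\emph{Base case.} Here I would use the odd-order normal subgroup $N=\langle\sigma^{2^a}\rangle$ of $D_{2^{a+1}t}$; it has order~$t$ and is characteristic in the cyclic normal subgroup $\langle\sigma\rangle$, hence normal, with $D_{2^{a+1}t}/N\cong D_{2^{a+1}}$, and likewise $C_{2^at}/\langle\sigma^{2^a}\rangle\cong C_{2^a}$. Since $|N|=t$ is odd, $H^q(N;\Z_2)=0$ for $q>0$, so the Lyndon--Hochschild--Serre spectral sequence of $1\to N\to D_{2^{a+1}t}\to D_{2^{a+1}}\to 1$ is concentrated in the bottom row and the inflation $H^\ast(D_{2^{a+1}};\Z_2)\to H^\ast(D_{2^{a+1}t};\Z_2)$ is an isomorphism. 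Finally, the composite $D_{2^{a+1}}\hookrightarrow D_{2^{a+1}t}\to D_{2^{a+1}t}/N$ of the lemma's inclusion with the quotient map is injective, because $D_{2^{a+1}}\cap N=1$ for order reasons, and both groups have order~$2^{a+1}$, so it is an isomorphism; combined with the inflation isomorphism this forces the restriction $H^\ast(D_{2^{a+1}t};\Z_2)\to H^\ast(D_{2^{a+1}};\Z_2)$ to be an isomorphism. For the cyclic base case one may instead simply use $C_{2^at}\cong C_{2^a}\times C_t$ together with the Künneth formula and $H^q(C_t;\Z_2)=0$ for $q>0$.

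I do not expect a deep obstacle: the one thing that needs care is the bookkeeping, namely checking that the Sylow $2$-subgroups are chosen so as to sit compatibly in $D_{2m}$ and $D_{2tm}$ (so that the restriction maps really do compose) and matching the abstract ``the spectral sequence collapses'' isomorphism with the concrete restriction map through the splitting $D_{2^{a+1}}\xrightarrow{\sim}D_{2^{a+1}t}/N$. In particular the argument never requires computing $H^\ast(D_{2^{a+1}};\Z_2)$ itself.
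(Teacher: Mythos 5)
Your argument is correct, but it takes a genuinely different route from the paper's. The paper applies the Lyndon--Hochschild--Serre spectral sequence directly to the extension $C_m\rightarrowtail C_{tm}\twoheadrightarrow C_t$, where the subgroup one restricts to is the \emph{kernel} and the odd group $C_t$ is the \emph{quotient}; since $H^p(C_t;\Z_2)=0$ for $p>0$ and $H^q(C_m;\Z_2)\in\{0,\Z_2\}$ is necessarily a trivial $C_t$-module, the $E_2$-page is concentrated on the $q$-axis and the edge (restriction) map is an isomorphism. The dihedral case is then handled by mapping the spectral sequence of $C_{tm}\rightarrowtail D_{2tm}\twoheadrightarrow C_2$ to that of $C_m\rightarrowtail D_{2m}\twoheadrightarrow C_2$ and using the cyclic case to see the map on $E_2$-pages is an isomorphism. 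You instead put the odd group on the other side: you use the extension with the odd normal subgroup $N$ as kernel, so the spectral sequence collapses onto the bottom row and \emph{inflation} is the isomorphism, then convert this into a statement about restriction via the section $D_{2^{a+1}}\hookrightarrow D_{2^{a+1}t}\to D_{2^{a+1}t}/N$. The Sylow reduction you prepend is genuinely needed and not a convenience: if $\gcd(m,t)>1$ then $\langle\sigma^t,\rho\rangle\cap\langle\sigma^m\rangle\ne 1$ inside $D_{2tm}$, so the composite $D_{2m}\to D_{2tm}/N$ is no longer injective and the base-case argument fails, whereas the paper's normal-subgroup-first arrangement has no coprimality issue and treats arbitrary $m$ in one stroke. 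What your version buys in exchange is a cleaner conceptual picture (Sylow detection plus ``odd kernel forces inflation iso'') and, in the cyclic case, an elementary Künneth argument that avoids spectral sequences altogether; the paper's version is shorter and more uniform. Both are valid proofs and neither requires knowing $H^\ast(D_{2m};\Z_2)$ in advance.
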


\begin{proof}
First note that $H^i(C_m;\Z_2)$ is isomorphic to $0$ or~$\Z_2$ for
all~$i$ and that $H^i(C_t;\Z_2)=0$ for $i>0$.  Applying the
Lyndon-Hochschild-Serre spectral sequence to
\[\xymatrix{
C_m\ar@{>->}[r]&
C_{tm}\ar@{->>}[r]&
C_t
}\]
now yields that $H^\ast(C_{mt};\Z_2)\xto\isom H^\ast(C_m;\Z_2)$.

Now $C_m$ can be regarded as an index-$2$ subgroup of $D_{2m}$ via
the inclusion map $\sigma\mapsto\sigma$.
Applying the Lyndon-Hochschild-Serre spectral sequence to the rows of
\[\xymatrix{
C_m\ar@{>->}[r]\ar[d]&
D_{2m}\ar@{->>}[r]\ar[d]&
C_2\ar@{=}[d]
\\
C_{tm}\ar@{>->}[r]&
D_{2tm}\ar@{->>}[r]&
C_2
}\]\sloppy
shows that the induced map between the $E^2$-terms
$H^i(C_2; H^j(C_{tm};\Z_2))\to H^i(C_2; H^j(C_m;\Z_2))$ is an isomorphism
and therefore also the map $H^\ast(D_{2m};\Z_2)\to H^\ast(D_{2tm};\Z_2)$.
\end{proof}

We have
\begin{align*}
H^\ast(C_2;\Z_2)&\isom\Z_2[\alpha],&&\adeg\alpha=1.
\end{align*}

For an element $g$ of a group $G$ with $g^2=1$, let us denote the
homomorphism $\tau\mapsto g$ by $\phi_g\colon C_2\to G$.  

\subsubsection{The case $m\equiv 1\pmod 2$}
Since $D_2=\set{e,\rho}$ it follows from \prettyref{lem:tm} that
$\phi_\rho\colon C_2\to D_{2m}$
induces an isomorphism $H^\ast(D_{2m};\Z_2)\xto\isom H^\ast(C_2;\Z_2)$.

\subsubsection{The case $m\equiv 2\pmod 4$}
The group $D_4=\set{e,\rho,\sigma,\sigma\rho}$ is isomorphic to 
$C_2\times C_2$.  It follows that 
\begin{align*}
H^\ast(D_{2m};\Z_2)&\isom\Z_2[\alpha,\beta],&& \adeg\alpha=\adeg\beta=1
\end{align*}
with
\begin{align*}
  \phi_{\sigma^{m/2}}^\ast\colon H^\ast(D_{2m};\Z_2)&\to H^\ast(C_2;\Z_2)&\phi_{\rho}^\ast\colon H^\ast(D_{2m};\Z_2)&\to H^\ast(C_2;\Z_2)\\
\alpha&\mapsto\alpha&\alpha&\mapsto0\\
\beta&\mapsto0&\beta&\mapsto\alpha.
\end{align*}

\subsubsection{The case $m\equiv 0\pmod 4$}
We have
\begin{align*}
H^\ast(C_m;\Z_2)&\isom\Z_2[x,u]/(x^2),&& \adeg x=1, \adeg u=2,
\\
H^\ast(D_{2m};\Z_2)&\isom\Z_2[x,y,u]/(xy),&& \adeg x=\adeg y=1, \adeg u=2.
\end{align*}
The group homomorphisms
\begin{equation}\label{eq:p-and-j}
\begin{aligned}
p\colon C_{m}&\to C_2
\qquad
&j\colon C_m&\to D_{2m}\\
\sigma&\mapsto\tau&\sigma&\mapsto\sigma
\end{aligned}
\end{equation}
induce homomorphisms
\begin{align*}
p^\ast\colon H^\ast(C_2;\Z_2)&\to H^\ast(C_m;\Z_2)&
j^\ast\colon H^\ast(D_{2m};\Z_2)&\to H^\ast(C_m;\Z_2)\\
\alpha&\mapsto x&
x&\mapsto x\\
&&y&\mapsto x\\
&&u&\mapsto u.
\end{align*}
We also have
(all coefficient groups $\Z_2$)
\begin{align*}
\phi_{\rho}^\ast\colon H^\ast(D_{2m})&\to H^\ast(C_2)&
\phi_{\sigma\rho}^\ast\colon H^\ast(D_{2m})&\to H^\ast(C_2)&
\phi_{\sigma^{m/2}}^\ast\colon H^\ast(C_m)&\to H^\ast(C_2)\\
u&\mapsto0&u&\mapsto0&u&\mapsto{\textstyle\alpha^2}\\
x&\mapsto\alpha&x&\mapsto0&x&\mapsto0\\
y&\mapsto0&y&\mapsto\alpha
\end{align*}
These follow from \prettyref{lem:tm} and the calculations in
\cite[IV.2]{adem-milgram}.

\subsection{The index of a group action}
We will formulate some results using ideal valued index of Fadell and
Husseini, see \cite{guideII}.  We will only
consider for cohomology with coefficients in~$\Z_2$.  

We assume all spaces to be CW-spaces and $\Gamma$ to be a finite group.

\begin{defn}\label{def:Er}
By $\E\Gamma$ we denote a contractible free $\Gamma$-space.

By $\E_r\Gamma$, $r\ge0$, 
we denote any $(r-1)$-connected, $r$-dimensional 
free $\Gamma$-CW-complex, or, if we need a specific example,
the simplicial complex
$\Gamma\join\cdots\join\Gamma$ ($r+1$ factors), which satisfies 
these properties.

We write $\B\Gamma=\E\Gamma/\Gamma$, $\B_r\Gamma=\E_r\Gamma/\Gamma$.
\end{defn}

\begin{defprop}\label{def:Ind}
If $X$ is a space
with an action of a group~$\Gamma$, the index
$\Ind_\Gamma(X)\subset H^\ast(\Gamma;\Z_2)$ is defined as the kernel
of the map 
\[H^\ast(\Gamma;\Z_2)=H^\ast_\Gamma(\ast;\Z_2)
\to H^\ast_\Gamma(X;\Z_2)\]
induced by the map from $X$ to a one-point space.  Here
$H_\Gamma^\ast(X;\Z_2)\deq H^\ast((\E\Gamma\times X)/\Gamma; \Z_2)$,
where $\E\Gamma\times X$ carries the diagonal action of~$\Gamma$.
If $X$ is a free gamma space, then 
$H^\ast_\Gamma(X;\Z_2)\isom H^\ast(X/\Gamma; \Z_2)$ and $\Ind_\Gamma(X)$
equals the kernel of the map
\[H^\ast(\Gamma; \Z_2)
=H^\ast(\B\Gamma;\Z_2)\xto{{\bar f}^\ast} H^\ast(X/\Gamma;\Z_2),\]
where $\bar f\colon X/\Gamma\to\B\Gamma$ is
induced by an equivariant map
$f\colon X\to_\Gamma\E\Gamma$.
\end{defprop}

\begin{prop}\label{prop:Ind-mon}
Let $f\colon X\to_\Gamma Y$ be an equivariant map between $\Gamma$-spaces.
Then $\Ind_\Gamma(Y)\subset\Ind_\Gamma(X)$.
\qed
\end{prop}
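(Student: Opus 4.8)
The plan is to deduce this from the functoriality of the Borel construction, exactly as one proves monotonicity for any index-type invariant. First I would note that an equivariant map $f\colon X\to_\Gamma Y$ induces a map of Borel constructions $(\E\Gamma\times X)/\Gamma\to(\E\Gamma\times Y)/\Gamma$, $[e,x]\mapsto[e,f(x)]$, covering the identity of $\B\Gamma$; this is well defined and continuous precisely because $f$ commutes with the $\Gamma$-actions, so that $\mathrm{id}_{\E\Gamma}\times f$ is equivariant for the diagonal actions. Passing to $\Z_2$-cohomology gives a homomorphism $f^\ast\colon H^\ast_\Gamma(Y;\Z_2)\to H^\ast_\Gamma(X;\Z_2)$.

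Next I would observe that $f$ fits into a commuting triangle of equivariant maps $X\xto{f}Y\to\ast$ whose composite is the constant map $X\to\ast$. Applying $H^\ast_\Gamma(-;\Z_2)$ (which, by the previous paragraph, is a contravariant functor on $\Gamma$-spaces) turns this into a commuting triangle
\[\xymatrix{
H^\ast(\Gamma;\Z_2)=H^\ast_\Gamma(\ast;\Z_2)\ar[r]\ar[dr]&H^\ast_\Gamma(Y;\Z_2)\ar[d]^{f^\ast}\\
&H^\ast_\Gamma(X;\Z_2)\,,
}\]
in which, by \prettyref{def:Ind}, the horizontal arrow is the map whose kernel is $\Ind_\Gamma(Y)$ and the diagonal arrow is the map whose kernel is $\Ind_\Gamma(X)$.

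The containment then follows by an immediate kernel chase: if $c\in H^\ast(\Gamma;\Z_2)$ maps to $0$ along the horizontal arrow, then it maps to $0$ along the diagonal arrow as well, whence $\Ind_\Gamma(Y)\subset\Ind_\Gamma(X)$. There is no real obstacle here; the only point that needs a word is that equivariant cohomology is genuinely functorial in the $\Gamma$-space, which is clear from its definition via $(\E\Gamma\times X)/\Gamma$ with the diagonal action. One should also check that this is consistent with the alternative description of $\Ind$ through $X/\Gamma\to\B\Gamma$ given in \prettyref{def:Ind} in the free case — but since we only ever use the general definition, no such verification is actually required.
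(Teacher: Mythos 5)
Your argument is correct and is exactly the standard functoriality-of-the-Borel-construction proof; the paper itself states this proposition with a bare \qed (treating it as immediate from \prettyref{def:Ind}), and what you have written is precisely the argument that justification leaves implicit.
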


\begin{prop}\label{prop:Ind-coind}
Let $r\ge0$ and $X$ be a $\Gamma$-space  
such that there is an equivariant map $\E_r\Gamma\to_\Gamma X$.
Then $\Ind_\Gamma(X)\intersect H^r(\Gamma)=0$.
\end{prop}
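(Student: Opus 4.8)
The plan is to reduce everything to the universal example $X=\E_r\Gamma$ and then compare $H^\ast(\B_r\Gamma;\Z_2)$ with $H^\ast(\B\Gamma;\Z_2)$ in degrees $\le r$.

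First I would apply \prettyref{prop:Ind-mon} to the hypothesised equivariant map $\E_r\Gamma\to_\Gamma X$: it gives $\Ind_\Gamma(X)\subset\Ind_\Gamma(\E_r\Gamma)$, so it suffices to show $\Ind_\Gamma(\E_r\Gamma)\intersect H^r(\Gamma;\Z_2)=0$. Since $\E_r\Gamma$ is a free $\Gamma$-CW-complex and $\E\Gamma$ is contractible, there is an equivariant map $g\colon\E_r\Gamma\to_\Gamma\E\Gamma$ (by equivariant obstruction theory, or simply because $\E\Gamma$ is terminal among free $\Gamma$-CW-complexes up to equivariant homotopy). By \prettyref{def:Ind}, $\Ind_\Gamma(\E_r\Gamma)$ is precisely the kernel of the map $\bar g^\ast\colon H^\ast(\B\Gamma;\Z_2)\to H^\ast(\B_r\Gamma;\Z_2)$ induced by $g$ on orbit spaces. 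Thus everything reduces to showing that $\bar g^\ast$ is injective in degree~$r$.

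To see this I would observe that $g$ is $\Gamma$-equivariant on covering spaces, hence $\bar g$ induces the identity on $\pi_1\cong\Gamma$, so up to homotopy $\bar g$ is the fibration $\E_r\Gamma\to\B_r\Gamma\to\B\Gamma$ classifying the universal cover. Running its Serre spectral sequence with $\Z_2$-coefficients, $E_2^{p,q}=H^p(\B\Gamma;H^q(\E_r\Gamma;\Z_2))$, the hypothesis that $\E_r\Gamma$ is $(r-1)$-connected makes the rows $1\le q\le r-1$ vanish; a bookkeeping of bidegrees then shows that for $p\le r$ no differential has the term in bidegree $(p,0)$ as source or target, so $E_2^{p,0}=E_\infty^{p,0}$ and the edge homomorphism $H^p(\B\Gamma;\Z_2)=E_2^{p,0}\hookrightarrow H^p(\B_r\Gamma;\Z_2)$ --- which is $\bar g^\ast$ --- is injective for all $p\le r$. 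In particular $\ker\bar g^\ast\intersect H^r(\Gamma;\Z_2)=0$, as desired. (A variant avoiding spectral sequences: the cellular $\Z_2[\Gamma]$-chain complex of $\E_r\Gamma$ is, through degree~$r$, the start of a free resolution of~$\Z_2$, so $H_r(\B_r\Gamma;\Z_2)$ surjects onto $H_r(\Gamma;\Z_2)$, and dualizing over the field~$\Z_2$ yields the injectivity of $\bar g^\ast$ in degree~$r$.) The degenerate case $r=0$ is covered by the same argument and merely asserts that the unit of $H^\ast(\Gamma;\Z_2)$ is not in $\Ind_\Gamma(X)$, which holds because $X$ is nonempty.

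The substantive point --- more a matter of care than of genuine difficulty --- is the first reduction: one must correctly identify $\Ind_\Gamma(\E_r\Gamma)$ with $\ker\bar g^\ast$ through \prettyref{def:Ind}, and recognise $\bar g$ as (homotopic to) the classifying map of the universal cover, so that the Serre edge homomorphism really is $\bar g^\ast$. Once that bookkeeping is set up, the $(r-1)$-connectivity of $\E_r\Gamma$ finishes the proof by an essentially formal computation.
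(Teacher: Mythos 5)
Your proof is correct, and the skeleton is the same as the paper's: reduce to the universal case $X=\E_r\Gamma$ via \prettyref{prop:Ind-mon}, identify $\Ind_\Gamma(\E_r\Gamma)$ with $\ker\bigl(\bar g^\ast\colon H^\ast(\B\Gamma;\Z_2)\to H^\ast(\B_r\Gamma;\Z_2)\bigr)$ using \prettyref{def:Ind}, and then show $\bar g^\ast$ is injective in degree~$r$. You differ from the paper only in the mechanism for that last injectivity. The paper builds a model $\E\Gamma\supset\E_r\Gamma$ by attaching free $\Gamma$-cells of dimension $\ge r+1$, so that $\B\Gamma$ is obtained from $\B_r\Gamma$ by attaching cells of dimension $\ge r+1$, and then reads off the monomorphism on $H^r$ directly from the cofibre sequence --- no spectral sequence needed. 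You instead run the Serre spectral sequence of the universal-cover fibration $\E_r\Gamma\to\B_r\Gamma\to\B\Gamma$, using the $(r-1)$-connectivity of the fibre to kill the relevant differentials; your parenthetical alternative, that the cellular $\Z_2[\Gamma]$-chain complex of $\E_r\Gamma$ is the beginning of a free resolution of $\Z_2$ through degree $r$ and that one then dualises over the field $\Z_2$, is essentially the algebraic shadow of the paper's geometric cell-attachment argument. All three routes are sound; the paper's is the most elementary in tooling, yours is the most explicit about why $\bar g$ is the classifying map and hence why the edge homomorphism is the map in question. No gaps.
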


\begin{proof}
We can construct a free $\Gamma$-space $\E\Gamma$
such that $\E_r\Gamma\subset \E\Gamma$ and such that
$\B\Gamma=\E\Gamma/\Gamma$ is obtained from
$\B_r\Gamma=E_r\Gamma/\Gamma$ by attaching cells of dimension greater
than~$r$.  This shows that the map $H^r(\B\Gamma)\to H^r(\B_r\Gamma)$
is a monomorphism.  Now since $X$ is $(r-1)$-connected, there is an equivariant
map $E_r\Gamma\to_\Gamma X$.  This shows
\[\Ind_\Gamma(X)\intersect H^r(\Gamma)\subset
\Ind_\Gamma(E_r\Gamma)\intersect H^r(\Gamma)
=0\]
as claimed.
\end{proof}

\subsection{Stiefel-Whitney classes}
\begin{thm}\label{thm:sw}
Let $\xi=(E\to X)$ be a real
$(k+1)$-dimensional vector bundle and let $S(\xi)=(S(E)\to X)$ be the
associated $k$-sphere-bundle and $p\colon P(E)\to X$ its
projectivization.  Let $t\in H^1(P(E);\Z_2)$ be the first
Stiefel-Whitney class of (the real line bundle associated to) the
antipodal $C_2$-action on $S(E)$.  We regard $H^\ast(P(E);\Z_2)$ as a module 
over $H^\ast(X; Z_2)$ via the homomorphism~$p^\ast.$

$H^\ast(P(E);\Z_2)$ is a free module over $H^\ast(X; Z_2)$ with basis
$(1,t,t^2,\dots,t^k)$ and
\[\sum_{r=0}^{k+1}w_{k+1-r}(\xi)t^r=0.
\]
\end{thm}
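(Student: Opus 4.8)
The plan is to recognise this as the mod~$2$ projective bundle theorem, and to prove it by identifying $t$ with the first Stiefel--Whitney class of the tautological line bundle on $P(E)$, then applying the Leray--Hirsch theorem and the Whitney sum formula.

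First I would describe the relevant line bundle. The fibrewise antipodal action of $C_2$ on $S(E)$ is free with quotient $P(E)$, so $S(E)\to P(E)$ is a double cover; the associated real line bundle $\gamma = S(E)\times_{C_2}\R$ (with $C_2$ acting on $\R$ by sign) is, by construction, the subbundle $\gamma\subset p^\ast E$ whose fibre over a line $\ell\in P(E_x)$ is $\ell$ itself. By definition $t = w_1(\gamma)$. Restricting to a fibre $P(E_x)\isom\mathbb{RP}^k$, the bundle $\gamma$ becomes the tautological line bundle on $\mathbb{RP}^k$, so $t$ restricts to the generator of $H^1(\mathbb{RP}^k;\Z_2)$, and hence $1,t,\dots,t^k$ restrict to a $\Z_2$-basis of $H^\ast(\mathbb{RP}^k;\Z_2)$, the cohomology of the fibre.

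For the module statement I would invoke the Leray--Hirsch theorem: since we have produced classes $1,t,\dots,t^k\in H^\ast(P(E);\Z_2)$ restricting to a basis on each fibre of the bundle $p\colon P(E)\to X$ over the CW-complex~$X$, the graded $H^\ast(X;\Z_2)$-module $H^\ast(P(E);\Z_2)$ is free with this basis. (If one prefers a self-contained argument, this follows by induction over the skeleta of~$X$ using the Mayer--Vietoris sequence, the base case being $X$ a point; equivalently one notes that the Leray--Serre spectral sequence with $\Z_2$ coefficients collapses because the fibre inclusion is surjective on cohomology.)

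For the relation I would choose a metric on $E$ and write $p^\ast E\isom\gamma\oplus Q$ with $Q$ of rank~$k$ (equivalently use the short exact sequence $0\to\gamma\to p^\ast E\to Q\to 0$). The Whitney sum formula gives $p^\ast w(\xi) = w(\gamma)\,w(Q) = (1+t)\,w(Q)$, so that $w(Q) = (1+t+t^2+\cdots)\,p^\ast w(\xi)$ in $H^\ast(P(E);\Z_2)$. Since $Q$ has rank~$k$ its degree-$(k+1)$ component vanishes, and extracting that component yields
\[ 0 = w_{k+1}(Q) = \sum_{r=0}^{k+1} t^r\, p^\ast w_{k+1-r}(\xi), \]
which, suppressing $p^\ast$ as in the statement, is exactly $\sum_{r=0}^{k+1} w_{k+1-r}(\xi)\,t^r = 0$. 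The only steps with genuine content are the identification of the double-cover line bundle with the tautological subbundle of $p^\ast E$ and the verification of the Leray--Hirsch hypotheses over a general CW base; I expect the latter — establishing freeness of the module, where the global topology of~$X$ enters — to be the step requiring the most care, after which the Whitney-class computation is routine.
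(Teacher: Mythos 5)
Your proof is correct, but it takes a genuinely different route from the paper's. The paper does not actually prove the theorem at all: it observes that the stated property (the $H^\ast(X;\Z_2)$-module structure of $H^\ast(P(E);\Z_2)$ together with the polynomial relation for~$t$) is the Grothendieck-style \emph{defining} characterization of Stiefel--Whitney classes, and simply cites Husemoller, Chapter~17, for the fact that this characterization agrees with any other definition. You instead take the usual axioms for Stiefel--Whitney classes (naturality, the Whitney sum formula, and the normalization $w(\gamma_1)=1+t$ on $\mathbb{RP}^k$) as given and \emph{derive} the projective bundle theorem from them: you identify the double-cover line bundle with the tautological subbundle $\gamma\subset p^\ast E$, apply Leray--Hirsch (the freeness of the module being the nontrivial global step, correctly flagged), and then split $p^\ast E\isom\gamma\oplus Q$ and extract the degree-$(k+1)$ term of $w(Q)=(1+t)^{-1}p^\ast w(\xi)$, which vanishes since $\operatorname{rank} Q = k$. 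Both arguments are valid; the paper's one-line citation is shortest, while your derivation makes the logical dependency explicit and is the right thing to do if one's working definition of $w_i$ is the axiomatic one rather than the projective-bundle one. The only thing worth making fully explicit in your write-up is the identification of ``the line bundle associated to the antipodal $C_2$-action on $S(E)$'' with the tautological subbundle of $p^\ast E$ (you sketch it correctly: $S(E)\to P(E)$ is a principal $C_2$-bundle and $S(E)\times_{C_2}\R$ is canonically the tautological line), since the normalization axiom is stated for the tautological bundle and that is what lets you conclude $t$ restricts to the generator on each fibre.
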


\begin{proof}
This property characterizes the Stiefel-Whintney classes,
see \cite[Chap.~17]{huse}. 
\end{proof}

\begin{lem}\label{lem:wbar}
In the situation of \prettyref{thm:sw}, if we express $t^{k+\ell}$ for
$\ell\ge0$ as
\[t^{k+\ell}=\sum_{j=0}^k s_{j,\ell+k-j}t^j\]
with $s_{j,r}\in H^r(X;\Z_2)$, then $s_{k,r}=\overline w_r(\xi)$, where $\overline
w(\xi)=\sum_{r\ge 0}\overline w_r(\xi)$ is defined by $\overline w(\xi)w(\xi)=1$.
\end{lem}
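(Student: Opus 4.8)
The plan is to work inside the free $H^\ast(X;\Z_2)$-module $H^\ast(P(E);\Z_2)$ with basis $(1,t,\dots,t^k)$, using the single relation $t^{k+1}=\sum_{r=0}^{k}w_{k+1-r}(\xi)t^r$ coming from \prettyref{thm:sw} (note $w_0(\xi)=1$, so the leading term on the right is $w_1 t^k$). First I would set up notation: for $\ell\ge0$ write $t^{k+\ell}=\sum_{j=0}^{k}s_{j,\ell+k-j}t^j$ with $s_{j,r}\in H^r(X;\Z_2)$, as in the statement, and observe that the case $\ell=0$ forces $s_{j,k-j}=\delta_{j,k}$, i.e.\ $s_{k,0}=1$ and $s_{j,k-j}=0$ for $j<k$. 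This matches $\overline w_0(\xi)=1$. I would then derive the recursion for the top coefficient $s_{k,r}$ by multiplying the expansion of $t^{k+\ell}$ by $t$ and reducing $t^{k+1}$ via the relation: comparing the coefficient of $t^k$ on both sides of $t^{k+\ell+1}=t\cdot t^{k+\ell}$ gives
\[
s_{k,\ell+1}=s_{k-1,\ell+1}+w_1 s_{k,\ell},
\]
and more generally a recursion expressing each $s_{j,\ast}$ in terms of the $s_{\bullet,\ast}$ one degree lower. This is the one genuine computation, and it is routine.

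The substance of the argument is then to show that the sequence $(s_{k,r})_{r\ge0}$ satisfies the same defining identity as $(\overline w_r(\xi))_{r\ge0}$, namely $\sum_{r\ge0}s_{k,r}\cdot\sum_{i\ge0}w_i(\xi)=1$. The cleanest way I see to do this is to package the recursions into a generating-function identity in the power series ring $H^\ast(X;\Z_2)[[t]]$ (or rather, to argue directly with the relation). Concretely: in $H^\ast(P(E);\Z_2)$ we have, for every $\ell\ge0$, that $t^{k+\ell}\equiv s_{k,\ell}\,t^k \pmod{(1,t,\dots,t^{k-1})}$, i.e.\ modulo the submodule spanned by lower powers of $t$. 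Working in the quotient module $H^\ast(P(E);\Z_2)/(1,t,\dots,t^{k-1})\cong H^\ast(X;\Z_2)\cdot\bar t^{\,k}$, the relation $\sum_{r=0}^{k+1}w_{k+1-r}(\xi)t^r=0$ read modulo lower powers becomes an identity that, after multiplying by $t^{\ell}$ and iterating, yields precisely $\sum_{i=0}^{k+1}w_i(\xi)\,s_{k,\ell-i}=0$ for all $\ell\ge1$ (with the convention $s_{k,m}=0$ for $m<0$), together with $s_{k,0}=1$. Those are exactly the equations $\sum_i w_i(\xi)\overline w_{\ell-i}(\xi)=\delta_{\ell,0}$ that define $\overline w$, so by uniqueness $s_{k,r}=\overline w_r(\xi)$ for all $r$.

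The step I expect to be the main obstacle is making the reduction ``modulo lower powers of $t$'' fully rigorous while keeping track of degrees: one must check that the submodule spanned by $(1,t,\dots,t^{k-1})$ is genuinely complementary in the appropriate graded sense, and that multiplying the relation by $t^{\ell}$ and re-reducing does not secretly reintroduce top-power contributions from the lower terms in a way that spoils the bookkeeping. The safe route, if the quotient-module argument feels too slick, is to prove the identity $\sum_{i=0}^{k+1}w_i(\xi)s_{j,r-i}=0$ for \emph{all} $j$ (not just $j=k$) simultaneously by induction on $\ell=r-j$, using the recursion from the first paragraph; the $j=k$ case is the one we want, and carrying all $j$ along makes the induction go through cleanly. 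Either way, no deep input is needed beyond \prettyref{thm:sw} and the uniqueness of the solution to the equations defining $\overline w(\xi)$.
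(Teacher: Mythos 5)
Your proposal is correct, and the underlying idea is the same as the paper's (use the single relation $\sum_{r=0}^{k+1}w_{k+1-r}(\xi)t^r=0$ together with freeness of the basis $(1,t,\dots,t^k)$ to extract the convolution identity $\sum_i w_i s_{k,\bullet-i}=\delta_{\bullet,0}$, which is the defining property of $\overline w$). The execution differs, though. You multiply the relation by $t^{\ell}$ inside $H^\ast(P(E);\Z_2)$, expand each $t^{r+\ell}$ in the basis, and apply the $H^\ast(X;\Z_2)$-linear ``coefficient of $t^k$'' functional; this gives the relations one degree $\ell$ at a time, with the convention $s_{k,m}=0$ for $m<0$. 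Your worry about the quotient-module picture is unnecessary once you phrase it this way: multiplication by $t^{\ell}$ takes place in the ring $H^\ast(P(E);\Z_2)$, and you only pass to coefficients afterwards, so no ring structure on the quotient is needed and nothing is ``secretly reintroduced''. The paper instead forms the single generating-function identity $\sum_{\ell\ge0}t^{k+\ell}=\sum_{j=0}^k s_j t^j$ with $s_j=\sum_r s_{j,r}$, multiplies both sides by $1-t$ (which telescopes the left side to $t^k$ over $\Z_2$), substitutes the relation only once for the lone overflow term $s_k t^{k+1}$, and then applies the cruder linear functional ``sum of all coefficients''. That functional makes the telescoping cancel and yields $1=s_k w(\xi)$ in one line. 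Both arguments are short and correct; yours is the more pedestrian, degree-by-degree version, the paper's a compressed generating-function trick, and the ``safe route'' induction you outline as a fallback is not actually needed.
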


\begin{proof}
Setting $s_j=\sum_{r\ge0}s_{j,r}$, the defining relation for the~$s_j$ is
\[\sum_{\ell\ge0}t^{k+\ell}=\sum_{j=0}^k s_jt^j.\]
Multiplying by $1-t$ yields
\begin{align*}
t^k&=\sum_{j=0}^ks_jt^j-\sum_{j=0}^ks_jt^{j+1}
=\sum_{j=0}^ks_jt^j-\sum_{j=0}^{k-1}s_jt^{j+1}+s_k\sum_{j=0}^kw_{k+1-j}(\xi)t^j.
\end{align*}
Since $(t^0,\dots,t^k)$ is a basis, we can add the coefficients on both sides,
which yields
\[1=s_k+s_k\sum_{j=0}^kw_{k+1-j}=s_k w(\xi)\]
as claimed.
\end{proof}

\begin{cor}\label{cor:sw}
Let $\Gamma$ be a group and $X$ a free $\Gamma$-space.  Let $\R^{k+1}$
be equipped with an orthogonal right $\Gamma$-action.  Let $\xi$ be
the $(k+1)$ dimensional bundle $\R^{k+1}\times_\Gamma \E\Gamma\to
\B\Gamma$.  Consider $\Sphere^k\times_\Gamma X$ as a (free) $C_2$-space
via the antipodal action on $\Sphere^k$.  Then if
$\alpha^{r+k}\in\Ind_{C_2}(\Sphere^k\times_\Gamma X)$ for some~$r\ge
0$, it follows that $\wbar_r(\xi)\in\Ind_\Gamma(X)$.
\end{cor}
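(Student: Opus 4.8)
The plan is to reduce the corollary to a single application of \prettyref{thm:sw} and \prettyref{lem:wbar} to a vector bundle over $X/\Gamma$; the only real work is matching up the objects occurring in those two statements with the Borel-construction language in which $\Ind_{C_2}$ and $\Ind_\Gamma$ are defined.

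First I would fix an equivariant map $\phi\colon X\to_\Gamma\E\Gamma$, which exists because $X$ is a free $\Gamma$-CW-space, and let $\bar\phi\colon X/\Gamma\to\B\Gamma$ be the induced map of orbit spaces. Let $\eta$ be the $(k+1)$-dimensional bundle $\R^{k+1}\times_\Gamma X\to X/\Gamma$. The $\Gamma$-map $(v,x)\mapsto(v,\phi(x))$ of $\R^{k+1}\times X$ into $\R^{k+1}\times\E\Gamma$ descends to a bundle map $\eta\to\xi$ over $\bar\phi$ which is a linear isomorphism on each fibre, so $\eta\isom\bar\phi^\ast\xi$ and hence $\wbar_r(\eta)=\bar\phi^\ast\wbar_r(\xi)$ by naturality of the classes $\wbar_i$. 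Since $X$ is free, \prettyref{def:Ind} identifies $\Ind_\Gamma(X)$ with $\ker(\bar\phi^\ast\colon H^\ast(\Gamma;\Z_2)\to H^\ast(X/\Gamma;\Z_2))$; therefore the conclusion $\wbar_r(\xi)\in\Ind_\Gamma(X)$ is equivalent to $\wbar_r(\eta)=0$ in $H^r(X/\Gamma;\Z_2)$, and it remains to prove the latter.

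Next I would set up the dictionary for \prettyref{thm:sw} and \prettyref{lem:wbar} applied to $\eta$. The associated sphere bundle of $\eta$ has total space $\Sphere^k\times_\Gamma X$, with the antipodal $C_2$-action on the first factor; this is exactly the free $C_2$-space in the statement, and the projectivization of $\eta$ is the quotient $(\Sphere^k\times_\Gamma X)/C_2$. As this $C_2$-action is free, $H^\ast((\Sphere^k\times_\Gamma X)/C_2;\Z_2)\isom H^\ast_{C_2}(\Sphere^k\times_\Gamma X;\Z_2)$, and under this identification the class $t\in H^1$ of \prettyref{thm:sw}, the first Stiefel-Whitney class of the line bundle associated to the antipodal action, is the image of the generator $\alpha\in H^1(C_2;\Z_2)$ under $H^\ast(C_2;\Z_2)\to H^\ast_{C_2}(\Sphere^k\times_\Gamma X;\Z_2)$, since both are pulled back along the classifying map of the double cover $\Sphere^k\times_\Gamma X\to(\Sphere^k\times_\Gamma X)/C_2$. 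Consequently $\Ind_{C_2}(\Sphere^k\times_\Gamma X)$ is precisely the kernel of the ring homomorphism $\Z_2[\alpha]\to H^\ast((\Sphere^k\times_\Gamma X)/C_2;\Z_2)$ carrying $\alpha$ to $t$, so the hypothesis $\alpha^{r+k}\in\Ind_{C_2}(\Sphere^k\times_\Gamma X)$ says exactly that $t^{r+k}=0$.

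Finally I would invoke \prettyref{lem:wbar} for $\eta$: writing $t^{r+k}=\sum_{j=0}^{k}s_{j,r+k-j}t^{j}$ with $s_{j,i}\in H^i(X/\Gamma;\Z_2)$, the coefficient of $t^{k}$ is $s_{k,r}=\wbar_r(\eta)$. By \prettyref{thm:sw} the powers $1,t,\dots,t^{k}$ are a basis of $H^\ast((\Sphere^k\times_\Gamma X)/C_2;\Z_2)$ as a free module over $H^\ast(X/\Gamma;\Z_2)$, so $t^{r+k}=0$ forces each $s_{j,r+k-j}$ to vanish, in particular $\wbar_r(\eta)=0$; by the first step, this is the desired conclusion. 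I expect the main obstacle to be not a computation but pinning down this middle-step dictionary correctly, that is, reconciling the vector-bundle formulation of \prettyref{thm:sw} (projectivization, tautological line bundle, its first Stiefel-Whitney class $t$) with the equivariant-cohomology formulation of the index; once that identification is made, the corollary follows by combining \prettyref{thm:sw} and \prettyref{lem:wbar}.
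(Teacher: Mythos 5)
Your proof is correct and follows essentially the same route as the paper: introduce the pulled-back bundle $\eta=\R^{k+1}\times_\Gamma X\to X/\Gamma$, identify the hypothesis $\alpha^{r+k}\in\Ind_{C_2}(\Sphere^k\times_\Gamma X)$ with $t^{r+k}=0$ in the projectivization, apply \prettyref{lem:wbar} to get $\wbar_r(\eta)=0$, and then use naturality of $\wbar_r$ under the classifying map $X/\Gamma\to\B\Gamma$ to conclude. The only difference is that you spell out the dictionary between the class $t$ and the generator $\alpha$ (and the identification of the projectivization with the Borel quotient), steps the paper treats as immediate; this extra care is harmless and, if anything, clarifies the argument.
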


\begin{proof}
Let $\eta$ be the bundle $\R^{k+1}\times_\Gamma X\to X/\Gamma$.
Using the notation from \prettyref{thm:sw} applied to the bundle~$\eta$,
$\alpha^{r+k}\in\Ind_{C_2}(\Sphere^k\times_\Gamma X)$ is equivalent to
$t^{r+k}=0$.  Therefore if we express $t^{r+k}$ in the basis
$t^0,\dots,t^k$ with coefficients in $H^\ast(X/\Gamma;\Z_2)$, all of
these coefficients will be zero.  By \prettyref{lem:wbar} the
coefficient of~$t^k$ is~$\wbar_r(\eta)$.  Now if 
$f\colon X\to_\Gamma\E\Gamma$ is an equivariant map, then since the class
$\wbar_r$ is characteristic $f^\ast(\wbar_r(\xi))=
\wbar_r(\eta)=0$, i.e.\ $\wbar_r(\xi)\in\Ind_\Gamma(X)$.
\end{proof}

\begin{prop}\label{prop:wbar-obs}
Let $\xi$ be a vector bundle over an $r$-dimensional finite simplicial 
complex~$X$ and $\wbar_r(\xi)=0$.  If $r=1$ or $r\equiv0\pmod2$, then
there is an $(r-1)$-dimensional vector bundle $\eta$ over~$X$ such that
$\xi\dplus\eta$ is trivial.
\end{prop}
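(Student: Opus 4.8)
The plan is to realise a stable inverse of~$\xi$ by a bundle of rank exactly~$r-1$, obtaining it by stripping trivial line subbundles off an arbitrary stable inverse; the last step is controlled by a twisted Euler class whose reduction mod~$2$ is~$\wbar_r(\xi)$, and it is here that the parity of~$r$ enters. We may assume $\operatorname{rank}\xi\ge1$, the rank-zero case being trivial, and $r\ge1$ (for $r=0$ the hypothesis is vacuous, since $\wbar_0\equiv1$).

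Since $X$ is a finite complex, $\xi$ has an inverse: embedding $\xi$ in a trivial bundle and taking the orthogonal complement yields $\zeta$ with $\xi\oplus\zeta$ trivial. Because the fibre $\Sphere^{n-1}$ of the sphere bundle of a rank-$n$ bundle is $(n-2)$-connected, a bundle of rank $>\dim X=r$ over~$X$ has a nowhere-zero section and hence splits off a trivial line; stripping such lines off $\zeta$ one at a time reduces it to a bundle $\zeta'$ of rank~$r$, and the trivial summands removed can be cancelled back (we are in the stable range, $\operatorname{rank}(\xi\oplus\zeta')=\operatorname{rank}\xi+r>\dim X$), so $\xi\oplus\zeta'$ is still trivial. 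By the Whitney formula $w(\zeta')=w(\xi)^{-1}=\wbar(\xi)$; in particular $w_1(\zeta')=w_1(\xi)$ and, crucially, $w_r(\zeta')=\wbar_r(\xi)=0$.

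It remains to strip one more trivial line off $\zeta'$, passing from rank~$r$ to rank~$r-1$; this succeeds exactly when the sphere bundle of $\zeta'$, with fibre $\Sphere^{r-1}$, has a section. Since $\dim X=r$ there is a single obstruction, the twisted Euler class $e(\zeta')\in H^r(X;\Z_{w_1(\xi)})$ (coefficients twisted by $w_1(\zeta')=w_1(\xi)$), whose reduction mod~$2$ is $w_r(\zeta')=0$. From the Bockstein sequence of $0\to\Z_{w_1(\xi)}\xrightarrow{\,2\,}\Z_{w_1(\xi)}\to\Z_2\to0$, using $H^{r+1}(X;\Z_{w_1(\xi)})=0$, we get $e(\zeta')\in 2H^r(X;\Z_{w_1(\xi)})$. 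Now let $r$ be even. The rank-$r$ bundles over~$X$ in the stable class of~$\zeta'$ are the lifts of the classifying map $X\to\B O$ of that class to $\B O(r)$, and over the $r$-complex~$X$ these form a torsor over $H^r(X;\pi_r(O/O(r)))$ with coefficients twisted by $w_1(\xi)$; here $\pi_r(O/O(r))\cong\Z$ precisely because $r$ is even (it is $\Z_2$ for $r$ odd), which is the crux of the parity hypothesis. Changing the lift alters the Euler class by multiplication by~$\pm2$ — universally the generating stably trivial rank-$r$ bundle over~$\Sphere^r$ is $T\Sphere^r$, with Euler number $\chi(\Sphere^r)=2$ — so the Euler classes of these lifts fill a coset of $2H^r(X;\Z_{w_1(\xi)})$. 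Since $e(\zeta')$ lies in that subgroup, some rank-$r$ bundle $\zeta''$ stably isomorphic to $\zeta'$ has $e(\zeta'')=0$; then $\zeta''\cong\eta\oplus\varepsilon^1$ with $\operatorname{rank}\eta=r-1$, while $\xi\oplus\zeta''$ is stably trivial of rank $>\dim X$, hence trivial, and cancelling the trivial line gives $\xi\oplus\eta$ trivial. For $r=1$ no Euler-class manoeuvre is needed: $\zeta'$ is then a line bundle with $w_1(\zeta')=\wbar_1(\xi)=0$, hence trivial, so $\xi$ itself is trivial and $\eta$ may be taken to be the rank-$0$ bundle.

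The main obstacle is the parity step: one must justify that over an $r$-complex the Euler classes of the rank-$r$ representatives of a fixed stable class fill a coset of $2H^r(X;\Z_{w_1(\xi)})$. This rests on the classical computation that $\pi_r$ of the Stiefel manifold $O/O(r)$ — equivalently $\pi_r(V_{N-r}(\R^{N}))$ for $N$ large — is $\Z$ for $r$ even and $\Z_2$ for $r$ odd, together with the identification of the induced operation on Euler classes as doubling. Everything else is standard obstruction theory on the $r$-dimensional complex, plus the stable-range statements $[X,\B O(n)]\xrightarrow{\sim}[X,\B O]$ for $n>\dim X$ and cancellation of trivial summands in that range.
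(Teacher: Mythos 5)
Your proposal is correct, but it takes a genuinely different — and noticeably longer — path than the paper. The paper never descends to rank~$r$ at all: it takes a stable inverse~$\eta_1$ of rank $n_1>r$ (so $w(\eta_1)=\wbar(\xi)$) and asks directly for $n_1-r+1$ linearly independent sections, i.e.\ a splitting $\eta_1\isom\eta\dplus\eps^{n_1-r+1}$ with $\operatorname{rank}\eta=r-1$. Over the $r$-complex~$X$, the only obstruction is the primary one, living in $H^r(X;\pi_{r-1}(V_{n_1-r+1}(\R^{n_1})))$, and the parity hypothesis is used at exactly this point: for $r$ even the lowest homotopy group of that Stiefel manifold is~$\Z_2$ (not~$\Z$), so the obstruction literally equals $w_r(\eta_1)=\wbar_r(\xi)=0$ and one is done, with no twisted integral class to worry about; a final application of cancellation in the stable range gives $\xi\dplus\eta$ trivial. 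You instead first drop to rank~$r$ and then must cope with an honest integral (twisted) Euler class, which you kill by changing the lift of $X\to\B O$ to $\B O(r)$ — here the parity enters through $\pi_r(O/O(r))\isom\Z$ and the Euler-number-$2$ computation on $T\Sphere^r$, together with the Bockstein to see $e(\zeta')\in 2H^r$. Both proofs make the parity do its work through the homotopy of Stiefel manifolds, but the paper exploits it one degree earlier ($\pi_{r-1}$ of a Stiefel manifold of $\ge2$-frames is $\Z_2$ when $r$ is even), which collapses the argument to a single citation of Milnor--Stasheff and avoids the Euler class, the Bockstein, and the torsor-of-lifts bookkeeping entirely. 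Your $r=1$ case also differs cosmetically (you trivialize the rank-one stable inverse; the paper orients $\xi$ and uses connectedness of $SL$), but both are fine.
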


\begin{proof}
Let $\xi$ be a $k+1$-dimensional bundle, $k>0$.
If $w_1(\xi)=\wbar_1(\xi)=0$, then $\xi$ is orientable and therefore trivial,
if $X$ is $1$-dimensional, since the special linear groups are connected.

We can therefore assume that $r$ is even.  Since $X$ is a finite
simplicial complex, there is an $n_1$-dimensional bundle $\eta_1$ such
that $\xi\dplus\eta_1\isom\eps_{k+1+n_1}$ is trivial.  We can assume
that $n_1>r$. In this situation, $w_r(\eta_1)$ is the only obstruction
against the existence of $n_1-r+1$ linear independent sections of
$\eta_1$.\cite[§12]{milnor-stasheff}  
Since $w_r(\eta_1)=\wbar(xi)=0$, there is an
$(r-1)$-dimensional bunde $\eta$ such that
$\eta\dplus\eps_{n_1+r+1}\isom\eps_{n_1}$.  Hence $\xi\dplus\eta$ is
stably trivial.  Since $k+1+r-1>r=\dim X$, it follows that $\xi\dplus\eta$ is
trivial.\cite[Lemma~3.5]{kervaire-milnor}
\end{proof}

\section{Calculation of Stiefel-Whitney classes}\label{sec:calc-sw}
\begin{defn}\label{def:xink}
Let $n\ge1$, $k\ge0$, $m=2n+k$.  We denote the $(k+1)$-dimensional vector bundle
$W_{n.k}\times_{D_{2m}}\E D_{2m}\to \B D_{2m}$ by $\xi_{n,k}$.
\end{defn}

In this section we will compute the total Stiefel-Whitney classes
\[w(\xi_{n,k})\in H^\ast(\B D_{2m};\Z_2)=H^\ast(D_{2m};\Z_2).\]  
These calculations will be needed in the following sections to
determine which stable Kneser graphs are test graphs.  The criteria
which require these calculations for their application are
\prettyref{prop:test-graph} and \prettyref{prop:not-test-graph}.

We will
have to distinguish the three cases corresponding to the three cases
in \prettyref{sec:cohomology}.  We will use notation from that
section, in particular the presentations of the $\Z_2$-algebras 
$H^\ast(D_{2m};\Z_2)$.

\begin{prop}\label{prop:odd-w}
Let $k=2r+1$, $r\ge0$, $n>0$, and $m=2n+k$.
Then
\begin{equation*}
w(\xi_{n,k})=(1+\alpha)^{r+1}\in H^\ast(D_{2m};\Z_2)\isom 
H^\ast(C_2;\Z_2).
\end{equation*}
\end{prop}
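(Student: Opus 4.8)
The plan is to compute $w(\xi_{n,k})$ when $k = 2r+1$ by pulling back along the isomorphism $\phi_\rho^* \colon H^*(D_{2m};\Z_2) \xrightarrow{\isom} H^*(C_2;\Z_2)$ established in \prettyref{sec:cohomology} for odd~$m$, and more importantly along the corresponding inclusion $\phi_\rho\colon C_2\to D_{2m}$, $\tau\mapsto\rho$. Since Stiefel-Whitney classes are natural, $\phi_\rho^*(w(\xi_{n,k})) = w(\phi_\rho^*\xi_{n,k})$, and $\phi_\rho^*\xi_{n,k}$ is the bundle $\R^{k+1}\times_{C_2}\E C_2\to \B C_2$ where $C_2$ acts on $\R^{k+1}$ by the restriction of the $D_{2m}$-action to the subgroup $\langle\rho\rangle$. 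So the whole computation reduces to identifying that $C_2$-representation and computing the Stiefel-Whitney class of the associated line bundle sum over $\B C_2 = \R P^\infty$.

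First I would read off from \prettyref{eq:def-rho-odd} that $x\cdot\rho = \diag(1,-1,1,-1,\dots,1,-1)\cdot x$ on $\R^{k+1} = \R^{2r+2}$; that is, as a $C_2$-representation via $\tau\mapsto\rho$, $W_{n,k}$ restricts to $(r+1)$ copies of the trivial one-dimensional representation and $(r+1)$ copies of the sign representation. (Here it is worth noting that the sign of the $\sigma$-part is irrelevant since $\rho$ alone generates the relevant subgroup, and $m$ odd guarantees $\phi_\rho$ really is the cohomology isomorphism — this is exactly why the odd case is the cleanest.) Next, over $\B C_2$ the trivial representation contributes a trivial line bundle with $w = 1$, and the sign representation contributes the tautological line bundle $\gamma$ over $\R P^\infty$ with $w(\gamma) = 1 + \alpha$. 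By the Whitney sum formula, $w(\phi_\rho^*\xi_{n,k}) = 1^{r+1}(1+\alpha)^{r+1} = (1+\alpha)^{r+1}$.

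Finally, since $\phi_\rho^*$ is a ring isomorphism $H^*(D_{2m};\Z_2)\to H^*(C_2;\Z_2)$ and under the identification $H^*(D_{2m};\Z_2)\isom H^*(C_2;\Z_2)$ the generator $\alpha$ corresponds to itself, we conclude $w(\xi_{n,k}) = (1+\alpha)^{r+1}$ in $H^*(D_{2m};\Z_2)\isom H^*(C_2;\Z_2)$, as claimed. I do not anticipate a real obstacle here: the only points requiring care are (a) confirming that the $\rho$-action matrix in \prettyref{def:Wnk} for odd~$k$ genuinely has $r+1$ entries equal to $+1$ and $r+1$ entries equal to $-1$ (a parity count on the pattern $1,-1,\dots,1,-1$ of length $2r+2$), and (b) invoking \prettyref{lem:tm} to legitimize replacing $D_{2m}$ by $D_2 = \{e,\rho\}\isom C_2$ for the purpose of the cohomology computation. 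Both are immediate. The odd case is genuinely easier than the cases $k$ even because there is no contribution from a $u$-class in degree~$2$, so the entire answer lives in the polynomial ring $\Z_2[\alpha]$ and is forced by a single line bundle computation over $\R P^\infty$.
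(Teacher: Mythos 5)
Your argument is correct and matches the paper's proof essentially line for line: restrict along $\phi_\rho\colon C_2\to D_{2m}$ (an isomorphism on $\Z_2$-cohomology because $m=2(n+r)+1$ is odd), read off from \prettyref{eq:def-rho-odd} that $\rho$ acts by $\diag(1,-1,\dots,1,-1)$, split the pulled-back bundle into $r+1$ trivial and $r+1$ tautological line bundles over $\RP^\infty$, and apply the Whitney sum formula. Nothing to add.
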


\begin{proof}
First note that $m=2(n+r)+1$ is odd.
We use that 
$\phi_\rho^\ast(w(\xi_{n,k}))=w(\phi_\rho^\ast(\xi_{n,k}))$ and that 
$\phi_\rho^\ast\colon H^\ast(D_{2m};\Z_2)\to H^\ast(C_m;\Z_2)$ is an isomorphism.
The element~$\rho\in D_{2m}$ acts on $W_{n,k}$ by $\diag(1,-1,\dots,1,-1)$.
Therefore $\phi_\rho^\ast(\xi_{n,k})$ is the Whitney sum of $k+1$ line bundles, half of them trivial, the other isomorphic to the canonical line bundle over
$\B C_2\homot\RP^\infty$.
It follows that $w(\phi_\rho^\ast(\xi_{n,k}))=(1+\alpha)^{(k+1)/2}$.
\end{proof}

\begin{prop}\label{prop:even-2-w}
Let $k=2r$, $r\ge1$, $n>0$, $n\equiv r+1\pmod 2$, and $m=2n+k$.
Then
\begin{multline*}
w(\xi_{n,k})=(1+\alpha)(1+\beta)^{\lceil r/2\rceil}
((1+\alpha)(1+\alpha+\beta))^{\lfloor r/2\rfloor}
\\\in H^\ast(D_{2m};\Z_2)\isom H^\ast(C_2\times C_2;\Z_2).
\end{multline*}
\end{prop}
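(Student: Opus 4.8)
The plan is to restrict $\xi_{n,k}$ to a $2$-Sylow subgroup of $D_{2m}$, over whose classifying space $W_{n,k}$ splits into lines, and to exploit that in this case restriction is an isomorphism on $\Z_2$-cohomology.

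First I would note that the hypothesis $n\equiv r+1\pmod2$ gives $m=2n+2r=2(n+r)$ with $n+r$ odd, so $m\equiv2\pmod4$ and we are in the second case of \prettyref{sec:cohomology}, $H^\ast(D_{2m};\Z_2)\isom\Z_2[\alpha,\beta]$. Put $z=\sigma^{m/2}$. As $m$ is even, $\rho\sigma\rho=\sigma^{-1}$ gives $\rho z\rho=z^{-1}=z$, so $P\deq\langle z,\rho\rangle$ is a subgroup isomorphic to $C_2\times C_2$, and since $n+r$ is odd it is a $2$-Sylow subgroup of $D_{2m}$. Write $H^\ast(P;\Z_2)=\Z_2[a,b]$ with $a$ the generator corresponding to the factor $\langle z\rangle$ and $b$ the one corresponding to $\langle\rho\rangle$. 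The homomorphisms $\phi_z$ and $\phi_\rho$ factor as $C_2\xto\isom\langle z\rangle\hookrightarrow P\hookrightarrow D_{2m}$ and $C_2\xto\isom\langle\rho\rangle\hookrightarrow P\hookrightarrow D_{2m}$, so the formulas $\phi_z^\ast(\alpha)=\alpha$, $\phi_z^\ast(\beta)=0$, $\phi_\rho^\ast(\alpha)=0$, $\phi_\rho^\ast(\beta)=\alpha$ from \prettyref{sec:cohomology} force the restriction $H^\ast(D_{2m};\Z_2)\to H^\ast(P;\Z_2)$ to send $\alpha\mapsto a$, $\beta\mapsto b$; being a homomorphism of polynomial algebras on two degree-$1$ generators that is an isomorphism in degree~$1$, it is an isomorphism. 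Since the total Stiefel--Whitney class is natural, it then suffices to decompose $W_{n,k}$ as a real $P$-representation and compute $w$ of the associated bundle over $\B P$ by the Whitney product formula, using that the four $1$-dimensional real $P$-representations have total classes $1$, $1+a$, $1+b$, $1+a+b$ according to whether each of $z,\rho$ acts trivially or by $-1$.

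Next I would read off the $P$-action on $W_{n,k}=\R\times\C^r$ from \prettyref{def:Wnk} and \prettyref{eq:v-even}. On the $\R$-summand $\sigma$ acts by $-1$ and $\rho$ by $+1$, hence $z$ acts by $(-1)^{m/2}=-1$ (here $m/2=n+r$ is odd) and $\rho$ by $+1$, contributing $1+a$. On the $j$-th complex summand, $1\le j\le r$, one has $x\cdot\sigma=-\xi^j x$ with $\xi=\exp(2\pi i/m)$, so $z$ acts by the scalar $(-\xi^j)^{m/2}=(-1)^{m/2}\xi^{jm/2}=-(-1)^j=(-1)^{j+1}$, while $\rho$ acts by complex conjugation $x\mapsto\overline x$. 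For odd $j$, $z$ acts trivially and the underlying real $P$-representation of $\C_j$ is the sum of the trivial representation and the one on which $\rho$ acts by $-1$, contributing $1+b$. For even $j$, $z$ acts by $-\mathrm{id}$, and in the real basis $\{1,i\}$ one has $z=\diag(-1,-1)$, $\rho=\diag(1,-1)$, so $\C_j$ splits into the $P$-representation with $z=-1$, $\rho=1$ and the one with $z=-1$, $\rho=-1$, contributing $(1+a)(1+a+b)$. Since among $1\le j\le r$ there are $\lceil r/2\rceil$ odd and $\lfloor r/2\rfloor$ even indices, the Whitney product formula yields over $\B P$
\[
w(\xi_{n,k})=(1+a)\,(1+b)^{\lceil r/2\rceil}\,\bigl((1+a)(1+a+b)\bigr)^{\lfloor r/2\rfloor},
\]
which under the identification $a\leftrightarrow\alpha$, $b\leftrightarrow\beta$ is the claimed formula.

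I expect the main obstacle to be the scalar computation $z|_{\C_j}=(-1)^{j+1}$ and the bookkeeping around it: it relies on $\xi^{m/2}=\exp(\pi i)=-1$ together with the parity hypothesis $n\equiv r+1\pmod2$, which is exactly what makes $m/2$ odd, hence $(-1)^{m/2}=-1$; the opposite parity would change both this sign and the action on the $\R$-summand and lead to the other even case. The remaining steps — identifying $P$, checking that restriction to it is an isomorphism, and splitting each $\C_j$ and collecting factors — are routine.
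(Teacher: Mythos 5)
Your proof is correct and follows essentially the same route as the paper: restrict to the subgroup $\{e,\sigma^{m/2},\rho,\rho\sigma^{m/2}\}\cong C_2\times C_2$ (where restriction is a $\Z_2$-cohomology isomorphism), split $W_{n,k}$ into one-dimensional real $P$-representations, and apply the Whitney product formula. The only cosmetic difference is that you group the summands by the complex blocks $\C_j$ and distinguish odd and even $j$, while the paper lists the $k+1$ real line bundles coordinatewise and observes the period-four pattern; the counting and the final product are the same.
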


\begin{proof}
We note that $m=2(n+r)\equiv 2\pmod4$.  By \prettyref{lem:tm} it
suffices to consider the bundle over $\B(C_2\times C_2)$ obtained by
restricting the action of $D_{2m}$ to the subgroup
$\set{e,\sigma^{m/2},\rho,\rho\sigma^{m/2}}$.  The element
  $\sigma^{m/2}$ acts by $\diag(-1,1,1,-1,-1,\dots)$ and the element
$\rho$ by $\diag(1,1,-1,1,-1,\dots)$ with the first
  entry exceptional and the others repeating with a period of four.
We can therefore write this bundle as a Whitney sum of line bundles
$\xi_0\dplus\xi_1\dplus\xi_2\dplus\dots\dplus\xi_k$.
We read off \sloppy
$w_1(\phi_{\sigma^{m/2}}^\ast(\xi_0))=
w_1(\phi_{\sigma^{m/2}}^\ast(\xi_{4j+3}))=
w_1(\phi_{\sigma^{m/2}}^\ast(\xi_{4j+4}))=\alpha$,
$w_1(\phi_{\sigma^{m/2}}^\ast(\xi_{4j+1}))=
w_1(\phi_{\sigma^{m/2}}^\ast(\xi_{4j+2}))=0$, and
$w_1(\phi_\rho^\ast(\xi_{4j+2}))=
w_1(\phi_\rho^\ast(\xi_{4j+4}))=
\alpha$,
$w_1(\phi_\rho^\ast(\xi_0))=
w_1(\phi_\rho^\ast(\xi_{4j+1}))=
w_1(\phi_\rho^\ast(\xi_{4j+3}))=
0$.
Therefore $w(\xi_0)=1+\alpha$, $w(\xi_{4j+1})=1$,
$w(\xi_{4j+2})=1+\beta$, $w(\xi_{4j+3})=1+\alpha$,
$w(\xi_{4j+4})=1+\alpha+\beta$.  The result follows.
\end{proof}

\begin{prop}\label{prop:even-4-w}
Let $k=2r$, $r\ge 1$, $n>0$, $n\equiv r\pmod 2$, $m=2n+k$.
Then
\[w(\xi_{n,k})=(1+y)(1+x+y+u)^{\lceil r/2\rceil}(1+x+y)^{\lfloor r/2\rfloor}
\in H^\ast(D_{2m};\Z_2).\]
\end{prop}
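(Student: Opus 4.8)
The plan is to follow the pattern of \prettyref{prop:odd-w} and \prettyref{prop:even-2-w}: split $W_{n,k}$ into $D_{2m}$-invariant lines and planes, compute $w$ of each associated bundle, and multiply, using the presentation $H^\ast(D_{2m};\Z_2)=\Z_2[x,y,u]/(xy)$ from \prettyref{sec:cohomology} (valid since $m=2(n+r)\equiv0\pmod4$) together with the restriction formulas listed there. Unlike the case $m\equiv2\pmod4$, the cohomology is now large enough that a single restriction does not suffice, and I would detect classes in $H^2(D_{2m};\Z_2)$ by using the three maps $j^\ast\colon H^\ast(D_{2m})\to H^\ast(C_m)$ and $\phi_\rho^\ast,\phi_{\sigma\rho}^\ast\colon H^\ast(D_{2m})\to H^\ast(C_2)$ simultaneously; these are jointly injective on $H^2$, as one sees from their values on the basis $x^2,y^2,u$.

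Concretely, with $\R^{k+1}=\R\times\C^r$ as in \prettyref{eq:v-even}, the line $L_0=\R\times\set0$ and the complex coordinate lines $\C_j\subset\C^r$, $1\le j\le r$, are $D_{2m}$-invariant. On $L_0$ the element $\sigma$ acts by $-1$ and $\rho$ by $+1$, so the associated bundle is the real line bundle of the character $\sigma\mapsto1$, $\rho\mapsto0$, which is $y$ (check: $j^\ast(y)=x$, $\phi_\rho^\ast(y)=0$, $\phi_{\sigma\rho}^\ast(y)=\alpha$); thus its total class is $1+y$. On $\C_j$ the element $\sigma$ acts by multiplication with $-\xi^j$ (a rotation) and $\rho$ by complex conjugation (a reflection), so the first Stiefel--Whitney class of the associated bundle $\mu_j$ is the mod-$2$ reduction of the determinant, i.e.\ the character $\sigma\mapsto0$, $\rho\mapsto1$, which is $x+y$, for every $j$. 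The hard part is $w_2(\mu_j)$. Restricting $\mu_j$ along $\phi_\rho$ and along $\phi_{\sigma\rho}$, the generators $\rho$ and $\sigma\rho$ act with eigenvalues $\pm1$, so each restriction splits as a trivial line bundle plus the canonical line bundle over $\B C_2$ and has vanishing $w_2$. Restricting $\mu_j$ along $j$ to $C_m$, the summand $\C_j$ is the complex line on which $\sigma$ acts as $-\xi^j=\exp\bigl(2\pi i(m/2+j)/m\bigr)$, with first Chern class $m/2+j$ in $H^2(\B C_m;\Z)=\Z/m$; as $m/2$ is even, its reduction mod~$2$, which equals $w_2$ of the underlying real plane bundle, is $(j\bmod2)\,u\in H^2(C_m;\Z_2)$. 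Since $(j\bmod2)\,u\in H^2(D_{2m};\Z_2)$ has precisely these three restrictions, joint injectivity gives $w_2(\mu_j)=(j\bmod2)\,u$, so $w(\mu_j)=1+x+y$ for even $j$ and $w(\mu_j)=1+x+y+u$ for odd $j$.

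Multiplicativity of $w$ then yields $w(\xi_{n,k})=(1+y)\prod_{j=1}^rw(\mu_j)$, and since among $j=1,\dots,r$ there are $\lceil r/2\rceil$ odd indices and $\lfloor r/2\rfloor$ even ones, this equals $(1+y)(1+x+y+u)^{\lceil r/2\rceil}(1+x+y)^{\lfloor r/2\rfloor}$, as claimed. I expect the only real obstacle to be the identification of $w_2(\mu_j)$: it needs the three restriction maps set up correctly and the mod-$2$ Chern-class computation over $\B C_m$, whereas the identification of the character attached to $L_0$, the determinant computation for $w_1(\mu_j)$, and the final product are routine.
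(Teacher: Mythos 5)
Your proof is correct and takes essentially the same route as the paper: decompose $\xi_{n,k}$ into the line bundle from $L_0$ and the $r$ plane bundles from the $\C_j$, read off $w_1$ by restricting along $\phi_\rho$ and $\phi_{\sigma\rho}$, and then pin down $w_2$ by a further restriction that detects $u$. The only difference is in that last step: you restrict along $j\colon C_m\hookrightarrow D_{2m}$ and compute the mod-$2$ reduction of the first Chern class of $\C_j$ over $\B C_m$, whereas the paper restricts along $\phi_{\sigma^{m/2}}\colon C_2\to D_{2m}$, where $\sigma^{m/2}$ acts by the scalar $(-1)^j$, so the restricted bundle is immediately two copies of a line bundle and $w_2$ falls out as $j\alpha^2$ without any reference to $H^2(\B C_m;\Z)$. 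Both give the same joint-injectivity argument on $H^2(D_{2m};\Z_2)$; the paper's choice of restriction is marginally more elementary, but the two calculations are equivalent in spirit.
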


\begin{proof}
We note that $m=2(n+r)\equiv0\pmod4$.

We have $\xi_{n,k} =\xi_0\dplus\xi_1\dplus\xi_2\dplus\dots\dplus\xi_r$
according to the block structure of the matrices in
\prettyref{eq:def-sigma-even} and \prettyref{eq:def-rho-even}, where
$\xi_0$ is a line bundle and the other bundles are $2$-dimensional.

We have $\phi_\rho(w_1(\xi_0))=0$,
$\phi_{\sigma\rho}(w_1(\xi_0))=\alpha$, and hence $w_1(\xi_0)=y$.

Let $j>0$.  For the bundle $\xi_j$, both $\rho$ and $\sigma\rho$ act
as a reflection, and hence
$\phi_\rho^\ast(w_1(\xi_j))=\phi_{\sigma\rho}(w_1(\xi_j))=1+\alpha$,
and $w_1(\xi_j)=x+y$.  The element $\sigma^{m/2}$ acts by
$(-1)^j$, and hence 
$\phi_{\sigma^{m/2}}^\ast(w_2(\xi_j))=j\alpha^2$ and
$w_2(\xi_j)=ju$.
The equation
\[
w(\xi_j)=\begin{cases}
1+y,&j=0\\
1+x+y+u,&\text{$j>0$ odd,}\\
1+x+y,&\text{$j>0$ even}
\end{cases}
\]
follows.
\end{proof}

\section{Test graphs}\label{sec:test-graphs}
Using \prettyref{thm:op-on-box} we obtain following criterion, which will 
enable us to prove that certain stable Kneser graphs are test graphs.
\begin{prop}\label{prop:test-graph}
Let $n,k\ge 1$, $m=2n+k$, $r\ge1$, and let $G$ be a graph.  If
$\alpha^{r+k}\in\Ind_{C_2}(\Hom(K_2, G))$ then
$\wbar_r(\xi_{n,k})\in\Ind_{D_{2m}}(\Hom(SG_{n,k}, G))$.
\end{prop}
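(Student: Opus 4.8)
The plan is to chain together the equivariant maps that have already been constructed and then feed the composite into \prettyref{cor:sw}. Concretely, suppose $\alpha^{r+k}\in\Ind_{C_2}(\Hom(K_2,G))$. We want to produce a $C_2$-map from $\Sphere(W_{n,k})\times_{D_{2m}}\Hom(SG_{n,k},G)$ into $\Hom(K_2,G)$, where the $C_2$-action on the source comes from the antipodal action on the sphere. The ingredient maps are: the $D_{2m}$-equivariant map $f\colon\Sphere(W_{n,k})\to\real{\Hom(K_2,SG_{n,k})}$ of \prettyref{thm:op-on-box}, which is also $C_2$-equivariant; and the composition map $\Hom(K_2,SG_{n,k})\times\Hom(SG_{n,k},G)\to\Hom(K_2,G)$ from \prettyref{sec:graph-complexes}, which is $C_2$-equivariant (the $C_2$ acting on the $K_2$-slot) and $D_{2m}$-equivariant (the $D_{2m}$ acting diagonally on the two $SG_{n,k}$'s). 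Since these two $D_{2m}$-actions agree and the two actions commute with $C_2$, passing to the quotient by the diagonal $D_{2m}$ yields a $C_2$-map
\[
\Sphere(W_{n,k})\times_{D_{2m}}\real{\Hom(SG_{n,k},G)}
\;\to_{C_2}\;
\real{\Hom(K_2,G)}.
\]

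First I would spell out this construction carefully, checking the equivariance bookkeeping: the source carries a residual $C_2$-action because $C_2$ acts on $\Sphere(W_{n,k})$ antipodally and this commutes with the right $D_{2m}$-action (by definition $\tau\cdot x=-x$ in $W_{n,k}$), so it descends to the Borel-type construction $\Sphere(W_{n,k})\times_{D_{2m}}\real{\Hom(SG_{n,k},G)}$; on the target, $C_2=\Aut(K_2)$ acts and the composition map intertwines. Then \prettyref{prop:Ind-mon} applied to this $C_2$-map gives
\[
\Ind_{C_2}\bigl(\real{\Hom(K_2,G)}\bigr)
\;\subset\;
\Ind_{C_2}\bigl(\Sphere(W_{n,k})\times_{D_{2m}}\real{\Hom(SG_{n,k},G)}\bigr).
\]
By hypothesis the left side contains $\alpha^{r+k}$, so the right side does too.

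Now I would invoke \prettyref{cor:sw} with $\Gamma=D_{2m}$, $X=\real{\Hom(SG_{n,k},G)}$, and $\R^{k+1}=W_{n,k}$, so that the bundle $\xi$ there is exactly $W_{n,k}\times_{D_{2m}}\E D_{2m}\to\B D_{2m}=\xi_{n,k}$ by \prettyref{def:xink}. The corollary says precisely that if $\alpha^{r+k}\in\Ind_{C_2}(\Sphere^k\times_{D_{2m}}X)$ then $\wbar_r(\xi_{n,k})\in\Ind_{D_{2m}}(X)=\Ind_{D_{2m}}(\Hom(SG_{n,k},G))$, which is the claim.

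The one point that needs a little care, and is the main potential obstacle, is the hypothesis of \prettyref{cor:sw} that $X$ be a \emph{free} $\Gamma$-space: a priori $D_{2m}$ need not act freely on $\real{\Hom(SG_{n,k},G)}$. I expect this is handled by replacing $X$ with $X\times\E D_{2m}$ (still equipped with the diagonal $D_{2m}$-action, now free, and $C_2$-equivariantly mapping to $X$), which does not change any index group that appears: $\Ind_{D_{2m}}(X\times\E D_{2m})=\Ind_{D_{2m}}(X)$ and similarly the $C_2$-index of $\Sphere(W_{n,k})\times_{D_{2m}}(X\times\E D_{2m})$ agrees with that of $\Sphere(W_{n,k})\times_{D_{2m}}X$ because the projection is a $C_2$-homotopy equivalence over the Borel construction. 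So I would insert that reduction before applying \prettyref{cor:sw}, or alternatively note that \prettyref{cor:sw} already builds $\E\Gamma$ into its formulation of $\xi$ and only the freeness of $X$ matters for the identification $H^\ast_{C_2}(\Sphere^k\times_\Gamma X)$; either way the argument goes through, and the rest is just the chain of inclusions above.
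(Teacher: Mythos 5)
Your proposal follows the paper's own argument step by step: compose the map from \prettyref{thm:op-on-box} with the composition-of-homomorphisms map, pass to the Borel construction $\Hom(SG_{n,k},G)\times\E D_{2m}$ to handle the possibly non-free $D_{2m}$-action, and apply \prettyref{prop:Ind-mon} and \prettyref{cor:sw}. You correctly anticipated the freeness issue and resolved it exactly as the paper does, so this is the same proof.
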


\begin{proof}
We consider the map
\[\Hom(K_2, SG_{n,k})\times_{D_{2m}}\Hom(SG_{n,k}, G)
\to_{C_2}\Hom(K_2, G),
\]
which extends composition of graph homomorphisms.  Now by
\prettyref{thm:op-on-box} there is an equivariant map
$\Sphere(W_{n,k})\to_{C_2\times D_{2m}}\Hom(K_2, SG_{n,k})$.  Since
the action of $D_{2m}$ on $\Hom(SG_{n,k}, G)$ may not be free, we
apply the Borel construction to this space and consider
$\Hom(SG_{n,k}, G)\times\E D_{2m}$ with the diagonal action
of~$D_{2m}$.  Projection onto any of the two factors is equivariant.
Combining these maps we obtain an equivariant map
\[\Sphere(W_{n,k})\times_{D_{2m}}(\Hom(SG_{n,k}, G)\times\E D_{2m})
\to_{C_2}\Hom(K_2, G).
\]
Therefore if 
$\alpha^{r+k}\in\Ind_{C_2}(\Hom(K_2, G))$ then
\[\alpha^{r+k}\in
  \Ind_{C_2}(\Sphere(W_{n,k})\times_{D_{2m}}(\Hom(SG_{n,k}, G)\times\E D_{2m}))\]
and, by \prettyref{cor:sw},
\[\wbar_r(\xi_{n,k})\in
\Ind_{D_{2m}}(\Hom(SG_{n,k}, G)\times\E D_{2m})=
\Ind_{D_{2m}}(\Hom(SG_{n,k}, G))
\]
as claimed.
\end{proof}

\begin{defn}\label{def:gamma-test-graph}
Let $T$ be a graph with a right action of a finite group~$\Gamma$.
We call $T$ a \emph{$\Gamma$-test graph}, if for all loopless
graphs~$G$ and
integers $r\ge0$ such that there exists an equivariant map
$\E_r\Gamma\to_\Gamma\real{\Hom(T, G)}$ the inequality
$\chi(G)\ge\chi(T)+r$ holds.
\end{defn}

\begin{prop}\label{prop:gamma-homotopy}
Let $T$ be a graph with a right action of a finite group~$\Gamma$.
If $T$ is a $\Gamma$-test graph, then $T$ is a homotopy test graph.
\end{prop}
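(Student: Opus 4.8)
The plan is to deduce the homotopy test graph property directly from the $\Gamma$-test graph property: whenever the connectivity hypothesis of \prettyref{def:homotopy-test-graph} is satisfied, I will exhibit the equivariant map demanded by \prettyref{def:gamma-test-graph}. So let $G$ be a loopless graph and $r\ge0$ with $\Hom(T,G)$ being $(r-1)$-connected; the goal is to prove $\chi(G)\ge\chi(T)+r$. First I would make the $\Gamma$-action explicit: the right action of $\Gamma$ on $T$ induces, by functoriality of $\Hom(\cdot,G)$, an action of $\Gamma$ on the poset $\Hom(T,G)$ by poset automorphisms, hence a cellular action on $\real{\Hom(T,G)}$. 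Thus $\real{\Hom(T,G)}$ is a $\Gamma$-CW-complex, although the action need not be free.

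The key step is to construct an equivariant map $\E_r\Gamma\to_\Gamma\real{\Hom(T,G)}$. By \prettyref{def:Er}, $\E_r\Gamma$ is an $r$-dimensional \emph{free} $\Gamma$-CW-complex, so I would build the map one skeleton at a time along $\Gamma$-orbits of cells. An equivariant extension over a free orbit of a $d$-cell amounts to an ordinary extension of the already-defined map over the boundary sphere of a single representative cell, and the obstruction to such an extension lies in $\pi_{d-1}(\real{\Hom(T,G)})$. Since $d\le r$ we have $d-1\le r-1$, and this homotopy group vanishes because $\real{\Hom(T,G)}$ is $(r-1)$-connected; in the degenerate case $r=0$ one only needs $\real{\Hom(T,G)}\ne\emptyset$, i.e.\ that there is a homomorphism $T\to G$, which is exactly $0$-connectedness in the relevant sense. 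Hence the equivariant map $\E_r\Gamma\to_\Gamma\real{\Hom(T,G)}$ exists. Applying the hypothesis that $T$ is a $\Gamma$-test graph to this map and to $G$ then yields $\chi(G)\ge\chi(T)+r$, which is precisely what \prettyref{def:homotopy-test-graph} requires.

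The only point requiring care is the equivariant obstruction-theoretic argument in the presence of a target on which $\Gamma$ acts non-freely; but this is standard equivariant CW-theory and presents no real difficulty precisely because the source $\E_r\Gamma$ is free, so that equivariant extension reduces cell by cell to the non-equivariant extension problem controlled by the connectivity of $\real{\Hom(T,G)}$. I expect the write-up to be short, the main work being to set up the functorial $\Gamma$-action on $\Hom(T,G)$ cleanly and to state the skeleton-by-skeleton extension in the form needed.
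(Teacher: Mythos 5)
Your proposal is correct and takes essentially the same approach as the paper: both reduce to constructing an equivariant map $\E_r\Gamma\to_\Gamma\real{\Hom(T,G)}$, which exists because $\E_r\Gamma$ is a free $r$-dimensional $\Gamma$-CW-complex and $\real{\Hom(T,G)}$ is $(r-1)$-connected. The paper records this as a one-line remark, while you spell out the standard cell-by-cell equivariant extension argument behind it; the content is the same.
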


\begin{proof}
If $\real{\Hom(T, G)}$ is $(r-1)$-connected, then there is an
equivariant map $\E_r\Gamma\to_\Gamma\real{\Hom(T, G)}$, since $E_r$ is an
$r$-dimensional free $\Gamma$-space.
\end{proof}

\begin{cor}[to \prettyref{prop:test-graph}]\label{cor:test-graph}
Let $n,k\ge 1$, $m=2n+k$.  
If $\wbar_r(\xi_{n,k})\ne0$ for all $r\ge1$, then $SG_{n,k}$ is a
$D_{2m}$-test graph.
\end{cor}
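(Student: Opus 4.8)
The plan is to derive \prettyref{cor:test-graph} directly from \prettyref{prop:test-graph} together with \prettyref{prop:Ind-coind}. Suppose $G$ is a loopless graph and $r\ge1$ is such that there exists an equivariant map $\E_r D_{2m}\to_{D_{2m}}\real{\Hom(SG_{n,k},G)}$; I want to show $\chi(G)\ge k+2+r = \chi(SG_{n,k})+r$. Assume for contradiction that $\chi(G)\le k+r+1$, i.e. there is a graph homomorphism $G\to K_{k+r+1}$. It is a standard fact (used already in the discussion opening \prettyref{sec:graph-complexes} via the composition map on $\Hom$-complexes, and in the "Groups and bundles" section) that $\real{\Hom(K_2,K_{k+r+1})}\homot_{C_2}\Sphere^{k+r-1}$, so that $\Ind_{C_2}(\Hom(K_2,K_{k+r+1}))$ is the ideal generated by $\alpha^{k+r}$; in particular $\alpha^{r+k}\in\Ind_{C_2}(\Hom(K_2,K_{k+r+1}))$. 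The homomorphism $G\to K_{k+r+1}$ induces a $C_2$-equivariant map $\Hom(K_2,G)\to_{C_2}\Hom(K_2,K_{k+r+1})$, so by \prettyref{prop:Ind-mon} we get $\alpha^{r+k}\in\Ind_{C_2}(\Hom(K_2,G))$.

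Now I would invoke \prettyref{prop:test-graph} with this $r$: since $\alpha^{r+k}\in\Ind_{C_2}(\Hom(K_2,G))$, it yields $\wbar_r(\xi_{n,k})\in\Ind_{D_{2m}}(\real{\Hom(SG_{n,k},G)})$. On the other hand, the hypothesis of the corollary is exactly $\wbar_r(\xi_{n,k})\ne0$ in $H^r(D_{2m};\Z_2)$; and since we assumed an equivariant map $\E_r D_{2m}\to_{D_{2m}}\real{\Hom(SG_{n,k},G)}$ exists, \prettyref{prop:Ind-coind} gives $\Ind_{D_{2m}}(\real{\Hom(SG_{n,k},G)})\cap H^r(D_{2m};\Z_2)=0$. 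The class $\wbar_r(\xi_{n,k})$ lies in $H^r(D_{2m};\Z_2)$ and in the index, hence is zero, contradicting the hypothesis. Therefore no homomorphism $G\to K_{k+r+1}$ exists, so $\chi(G)\ge k+r+2=\chi(SG_{n,k})+r$, which is the defining property (\prettyref{def:gamma-test-graph}) of a $D_{2m}$-test graph. (That $\chi(SG_{n,k})=k+2$ is Schrijver's theorem, recalled in the introduction.)

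The only subtlety — and the one place I would be careful — is the handedness of the two group actions involved in \prettyref{prop:test-graph}, and the fact that \prettyref{prop:Ind-coind} is stated for a space admitting a map \emph{from} $\E_r\Gamma$, which is precisely the hypothesis we are handed. One should also note the trivial case distinction: if $r\ge1$ there is nothing else to check, and the corollary makes no claim for $r=0$ (where the inequality $\chi(G)\ge\chi(SG_{n,k})$ is automatic from $SG_{n,k}\hookrightarrow G$ being impossible only if... in fact $r=0$ needs no argument since $\Hom(SG_{n,k},G)$ nonempty forces a homomorphism $SG_{n,k}\to G$ and hence $\chi(G)\ge\chi(SG_{n,k})$). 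So the real content is entirely in chaining \prettyref{prop:Ind-mon}, \prettyref{prop:test-graph}, and \prettyref{prop:Ind-coind}, and the main obstacle is essentially bookkeeping rather than a new idea. Finally, by \prettyref{prop:gamma-homotopy}, being a $D_{2m}$-test graph implies being a homotopy test graph, which is the auxiliary claim mentioned in the theorem statement of the introduction.
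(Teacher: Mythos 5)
Your proof is correct and is essentially the paper's argument, just presented as a proof by contradiction rather than directly: where you assume $\chi(G)\le k+r+1$, pass through \prettyref{prop:Ind-mon} and \prettyref{prop:test-graph} to land $\wbar_r(\xi_{n,k})$ in the index, and then contradict \prettyref{prop:Ind-coind}, the paper chains the same three facts in the contrapositive order, starting from the equivariant map and \prettyref{prop:Ind-coind} to conclude $\wbar_r(\xi_{n,k})\notin\Ind_{D_{2m}}(\Hom(SG_{n,k},G))$, then $\alpha^{r+k}\notin\Ind_{C_2}(\Hom(K_2,G))$, then $\chi(G)\ge r+k+2$. Your side remark disposing of $r=0$ is a sensible observation that the paper leaves implicit (\prettyref{prop:test-graph} is stated only for $r\ge1$, so the $r=0$ case really is the trivial one you describe).
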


\begin{proof}
Let $G$ be a graph such that there is a map
$\E_r D_{2m}\to_{D_{2m}}\Hom(SG_{n,k}, G)$ for some $r\ge0$.
Then by \prettyref{prop:Ind-coind}
\begin{equation*}
\Ind_{D_{2m}}(\Hom(SG_{n,k}, G))\intersect H^r(D_{2m};\Z_2)
=0
\end{equation*}
and therefore $\wbar_r(\xi_{n,k})\notin
\Ind_{D_{2m}}(\Hom(SG_{n,k}, G))$.  By \prettyref{prop:test-graph}
this implies $\alpha^{r+k}\notin\Ind_{C_2}(\Hom(K_2, G))$.
It follows that 
\[\chi(G)\ge r+k+2=r+\chi(SG_{n,k}),\]
since $\alpha^{r+k}\in\Ind_{C_2}(\Hom(K_2, K_{r+k+1}))
=\Ind_{C_2}\Sphere^{r+k-1}$.
\end{proof}

We can also obtain a more precise result.

\begin{cor}[to \prettyref{prop:test-graph}]\label{cor:test-graph-gamma}
Let $n,k\ge 1$, $m=2n+k$, $\Gamma$ a subgroup of $D_{2m}$ and
$i\colon\Gamma\to D_{2m}$ the inclusion map.
If $i^\ast(\wbar_r(\xi_{n,k}))\ne0$ for all $r\ge1$, then $SG_{n,k}$ is a
$\Gamma$-test graph.
\end{cor}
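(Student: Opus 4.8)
The plan is to imitate the proof of \prettyref{cor:test-graph} almost verbatim, replacing $D_{2m}$ by the subgroup $\Gamma$ wherever the argument permits. First I would let $G$ be a loopless graph for which there exists an equivariant map $\E_r\Gamma\to_\Gamma\real{\Hom(SG_{n,k},G)}$ for some $r\ge0$; the goal is to deduce $\chi(G)\ge\chi(SG_{n,k})+r=r+k+2$. By \prettyref{prop:Ind-coind} applied to the $\Gamma$-space $\Hom(SG_{n,k},G)$, we get $\Ind_\Gamma(\Hom(SG_{n,k},G))\intersect H^r(\Gamma;\Z_2)=0$, and hence $i^\ast(\wbar_r(\xi_{n,k}))\notin\Ind_\Gamma(\Hom(SG_{n,k},G))$ by hypothesis (the case $r\ge1$ is the hypothesis; for $r=0$ there is nothing to prove since $H^0$ contributes only the trivial bound $\chi(G)\ge k+2$, which holds because $SG_{n,k}\to G$ exists whenever $\Hom(SG_{n,k},G)\neq\emptyset$, forcing $\chi(G)\ge\chi(SG_{n,k})$ — though one should double-check the $r=0$ corner just as in \prettyref{cor:test-graph}).

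The key step is to push \prettyref{prop:test-graph} through the restriction to $\Gamma$. Here I would not restate the whole Borel-construction argument but instead observe that restricting the $D_{2m}$-action to $\Gamma$ is functorial: the equivariant map $\Sphere(W_{n,k})\to_{C_2\times D_{2m}}\Hom(K_2,SG_{n,k})$ of \prettyref{thm:op-on-box} is in particular $(C_2\times\Gamma)$-equivariant, so the proof of \prettyref{prop:test-graph} goes through with $D_{2m}$ replaced by $\Gamma$ and $\xi_{n,k}$ replaced by $i^\ast\xi_{n,k}$, the bundle $W_{n,k}\times_\Gamma\E\Gamma\to\B\Gamma$. Since Stiefel-Whitney classes are natural, $\wbar_r(i^\ast\xi_{n,k})=i^\ast(\wbar_r(\xi_{n,k}))$, so the conclusion becomes: if $\alpha^{r+k}\in\Ind_{C_2}(\Hom(K_2,G))$ then $i^\ast(\wbar_r(\xi_{n,k}))\in\Ind_\Gamma(\Hom(SG_{n,k},G))$. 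Contrapositively, from $i^\ast(\wbar_r(\xi_{n,k}))\notin\Ind_\Gamma(\Hom(SG_{n,k},G))$ we get $\alpha^{r+k}\notin\Ind_{C_2}(\Hom(K_2,G))$.

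Finally I would close the argument exactly as in \prettyref{cor:test-graph}: since $\alpha^{r+k}\in\Ind_{C_2}(\Hom(K_2,K_{r+k+1}))=\Ind_{C_2}(\Sphere^{r+k-1})$, the nonmembership $\alpha^{r+k}\notin\Ind_{C_2}(\Hom(K_2,G))$ together with \prettyref{prop:Ind-mon} rules out any equivariant map $\Hom(K_2,G)\to_{C_2}\Hom(K_2,K_{r+k+1})$, equivalently any graph homomorphism $G\to K_{r+k+1}$, so $\chi(G)>r+k+1$, i.e.\ $\chi(G)\ge r+k+2=\chi(SG_{n,k})+r$. The main obstacle — really the only point requiring care — is making sure that the restriction-to-$\Gamma$ version of \prettyref{prop:test-graph} is genuinely a consequence of the same proof and that the naturality $\wbar_r(i^\ast\xi_{n,k})=i^\ast(\wbar_r(\xi_{n,k}))$ is legitimate; both are routine, so in the write-up I would simply say ``the proof of \prettyref{prop:test-graph} applies verbatim with $D_{2m}$ replaced by~$\Gamma$ and $\xi_{n,k}$ by $i^\ast\xi_{n,k}$'' and then invoke naturality of $\wbar_r$, rather than reproducing the Borel construction.
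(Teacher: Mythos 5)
Your proof is correct, but it routes through a slightly different lemma than the paper does. The paper leaves \prettyref{prop:test-graph} untouched (stated only for $D_{2m}$) and instead observes, in one line, that the restriction map on equivariant cohomology sends index ideals into index ideals: for any $D_{2m}$-space~$X$ one has $i^\ast[\Ind_{D_{2m}}(X)]\subset\Ind_\Gamma(X)$, which is immediate from the commutative square comparing the Borel constructions for $\Gamma$ and~$D_{2m}$. Combined with $i^\ast(\wbar_r(\xi_{n,k}))\notin\Ind_\Gamma(\Hom(SG_{n,k},G))$, this yields $\wbar_r(\xi_{n,k})\notin\Ind_{D_{2m}}(\Hom(SG_{n,k},G))$, and then \prettyref{prop:test-graph} is invoked as stated. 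You instead formulate and re-prove a $\Gamma$-version of \prettyref{prop:test-graph} (replacing $D_{2m}$ by $\Gamma$ throughout the Borel-construction argument and $\xi_{n,k}$ by the restricted bundle $i^\ast\xi_{n,k}$), and then use naturality of Stiefel--Whitney classes, $\wbar_r(i^\ast\xi_{n,k})=i^\ast\bigl(\wbar_r(\xi_{n,k})\bigr)$, to match the hypothesis. Both steps you rely on — the $(C_2\times\Gamma)$-equivariance of the map from \prettyref{thm:op-on-box} by restriction, and naturality of characteristic classes — are indeed routine, so your argument is sound. The trade-off: the paper's route is shorter because the $\Ind$-naturality observation does the transport in one stroke; yours is a bit longer but arguably more transparent, since it makes explicit that the whole mechanism of \prettyref{prop:test-graph} restricts along~$i$. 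Your remark about the $r=0$ case is a reasonable bit of caution but is not strictly needed, since $\E_0\Gamma$ nonempty forces $\Hom(SG_{n,k},G)\neq\emptyset$ and hence $\chi(G)\ge\chi(SG_{n,k})$ directly.
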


\begin{proof}
From the existence of a map $\E_r\Gamma\to_{\Gamma}\Hom(SG_{n,k}, G)$
we obtain $i^\ast(\wbar_r(\xi_{n,k}))\notin\Ind_\Gamma(\Hom(SG_{n,k},G))$.  
But $i^\ast[\Ind_{D_{2m}}(\Hom(SG_{n,k}, G))]\subset
\Ind_\Gamma(\Hom(SG_{n,k}, G))$, and
therefore 
$\wbar_r(\xi_{n,k})\notin \Ind_{D_{2m}}(\Hom(SG_{n,k},G))$.  
Now the proof proceeds as before.
\end{proof}

When we show that $SG_{n,k}$ is a $\Gamma$-test graph, we will, 
partly to continue the tradition of previous results of
this kind, want $\Gamma$ to act freely on $\Hom(SG_{n,k}, G)$.
Even though we could deduce the freeness of the action a posteriori
from the lower bound on (the finite number) $\chi(G)$, we will prove
it independently using the following criterion.

\begin{lem}\label{lem:g}
Let $T$ be a graph on which the group~$\Gamma$ acts.  Then the
following are equivalent.
\begin{enumerate}
\item\label{it:lem-g-i}
For all loopless graphs~$G$ the induced action of~$\Gamma$ on
$\real{\Hom(T, G)}$ is free.
\item\label{it:lem-g-ii}
For every $\gamma\in\Gamma\wo\set1$ there are $v\in V(T)$ and $k\ge0$ such that
$(v,v\cdot\gamma^k)\in E(T)$.
\end{enumerate}
\end{lem}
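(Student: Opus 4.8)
The plan is to prove both implications of the equivalence directly, using the explicit combinatorial model for the vertices of $\Hom(T,G)$, namely that the atoms of the poset $\Hom(T,G)$ are exactly the graph homomorphisms $T\to G$ and that a cell of $\Hom(T,G)$ is a function $f\colon V(T)\to\power(V(G))\wo\set\emptyset$ with $f(u)\times f(v)\subset E(G)$ for every $(u,v)\in E(T)$. The action of $\gamma\in\Gamma$ sends such an $f$ to $f\circ\gamma$ (precomposition with the action on $V(T)$), and a point of $\real{\Hom(T,G)}$ lies in the open cell indexed by $f$; it is fixed by $\gamma$ precisely when $f\circ\gamma = f$. So the action on $\real{\Hom(T,G)}$ is free if and only if for every $\gamma\ne1$ there is \emph{no} cell $f$ with $f\circ\gamma^j=f$ for some... more precisely, $\gamma$ has a fixed point iff $\gamma^j$ fixes some cell $f$ for the smallest $j\ge1$ with $\gamma^j$ in the stabilizer; it is cleaner to say the action is free iff for every $\gamma\ne1$ and every cell $f$, the orbit $\set{f,f\circ\gamma,f\circ\gamma^2,\dots}$ has more than one element, equivalently $f\circ\gamma^j\ne f$ whenever $1\le j<\ord(\gamma)$. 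Actually the crispest formulation: the action is free iff for every $\gamma\ne1$ there is no cell $f$ invariant under the cyclic subgroup $\langle\gamma\rangle$. I would record this reduction first.

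For the implication (\ref{it:lem-g-ii})$\Rightarrow$(\ref{it:lem-g-i}), suppose $\gamma\ne1$ and there exist $v\in V(T)$ and $k\ge0$ with $(v,v\cdot\gamma^k)\in E(T)$. Let $f$ be any cell of $\Hom(T,G)$ invariant under $\langle\gamma\rangle$, so $f(u)=f(u\cdot\gamma^i)$ for all $u$ and all $i$. Applying the cell condition to the edge $(v,v\cdot\gamma^k)$ gives $f(v)\times f(v\cdot\gamma^k)\subset E(G)$, and since $f(v\cdot\gamma^k)=f(v)$ this says $f(v)\times f(v)\subset E(G)$; picking any $w\in f(v)$ (nonempty) yields $(w,w)\in E(G)$, a loop, contradicting looplessness of $G$. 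Hence no invariant cell exists and the action is free. Note $k=0$ is allowed and corresponds to $\gamma$ fixing a vertex $v$ that carries a loop, which again forces a loop in $G$ — I should make sure the argument literally covers $k=0$, which it does since then $v\cdot\gamma^k=v$ trivially.

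For (\ref{it:lem-g-i})$\Rightarrow$(\ref{it:lem-g-ii}) I would argue by contraposition: assume there is $\gamma\ne1$ such that for \emph{all} $v$ and all $k\ge0$ we have $(v,v\cdot\gamma^k)\notin E(T)$, and construct a loopless $G$ together with an invariant cell. The natural candidate is to let $G$ be the quotient-type graph on the orbit set $V(T)/\langle\gamma\rangle$: put $V(G)=V(T)/\langle\gamma\rangle$ and declare two orbits $[u],[v]$ adjacent iff some representatives are adjacent in $T$, i.e.\ $(u\cdot\gamma^a,\,v\cdot\gamma^b)\in E(T)$ for some $a,b$. The quotient map $q\colon T\to G$ is then a graph homomorphism, and it is $\langle\gamma\rangle$-invariant as a cell (the atom $q$ satisfies $q\circ\gamma=q$), so it is a fixed point of $\gamma$ in $\real{\Hom(T,G)}$, giving a non-free action once we know $G$ is loopless. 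The main obstacle — and the step deserving the most care — is exactly verifying that $G$ has no loops: $[u]$ is a loop in $G$ iff $(u\cdot\gamma^a, u\cdot\gamma^b)\in E(T)$ for some $a,b$, which by applying $\gamma^{-a}$ (the action is by automorphisms, so it preserves $E(T)$) is equivalent to $(u, u\cdot\gamma^{b-a})\in E(T)$; writing $b-a\equiv k$ for a suitable $k\ge0$ modulo the order of $\gamma$ on the relevant orbit, this is ruled out precisely by our assumption. (If $\gamma$ has infinite order on $V(T)$ one takes $k=b-a$ or $a-b$, whichever is nonnegative, and uses that $E(T)$ is symmetric.) One should also double-check that $G$ is genuinely loopless and not secretly empty or ill-defined, and that precomposition with $\gamma$ really is the action on cells in the orientation the paper uses (right action on $T$), but these are routine once the loop-freeness is settled. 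I expect no other serious difficulty; the content is entirely in this quotient construction and the loop check.
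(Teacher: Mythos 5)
Your proof is correct and follows essentially the same route as the paper: both directions reduce to whether some cell $f$ of $\Hom(T,G)$ satisfies $f\circ\gamma=f$, and the contrapositive of (i)$\Rightarrow$(ii) is handled by the same quotient construction $G=T/\langle\gamma\rangle$ with the quotient map as the invariant atom. The extra care you take with the looplessness check and the $k\ge0$ bookkeeping is sound but unfolds exactly the observations the paper's shorter proof relies on.
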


\begin{proof}
``$\Longleftarrow$'': If the action is not free then there is an
  $f\in\Hom(T, G)$ and a $\gamma\in\Gamma\wo\set1$ such that $\gamma
  f=f$.  This means that for every $v\in V(T)$ we have
  $f(v\gamma)=f(v)$ and therefore $f(v\gamma^k)=f(v)$ for all $v$
  and~$k$.  But $f(v\gamma^k)=f(v)$ implies $(v,v\gamma^k)\notin
  E(T)$, since $G$ is loopless.

``$\Longrightarrow$'': Assume there is a $\gamma\in\Gamma\wo\set1$
  such that $(v,v\gamma^k)\notin E(T)$ for all $v$ and~$k$.  This
  means that the orbits of the action of the subgroup
  $\Gamma'\deq\set{\gamma^k}$ of~$\Gamma$ are independent sets.  The
  quotient map $q\colon T\to T/\Gamma'$ is therefore a graph
  homomorphism to a loopless graph, and $\gamma\cdot g=g$, so the action
  on $\Hom(T, T/\Gamma')$ is not free.
\end{proof}

\begin{thm}\label{thm:n,2}
Let $k\in\set{1,2}$, $n\ge1$, and $m=2n+k$.  Let
$\Gamma=\set{1,\rho}\isom C_2$ be the subgroup of $D_{2m}$ generated
by~$\rho$.  Let $G$ be a loopless graph such that $\Hom(SG_{n,k},
G)\ne\emptyset$.  Then $\Gamma$ acts freely on $\Hom(SG_{n,k}, G)$ and
for all $r\ge0$ we have that
\[\alpha^{r+k}\in\Ind_{C_2}(\Hom(K_2, G))
\text{\ \,implies\ \,}
\alpha^r\in\Ind_{C_2}(\Hom(SG_{n,k}, G)).\]
In particular, $SG_{n,k}$ is a $\Gamma$-test graph.
\end{thm}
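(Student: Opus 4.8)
The plan is to prove the three assertions of the theorem --- freeness of the $\Gamma$-action, the stated implication on indices, and the $\Gamma$-test-graph property --- in turn.

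For \textbf{freeness} I would apply \prettyref{lem:g} to $\Gamma=\set{1,\rho}$: since $\rho^2=1$ and $SG_{n,k}$ is loopless, it suffices to exhibit one stable $n$-set $S\subset\Z_m$ with $(S,-S)\in E(SG_{n,k})$, and since $\rho$ is a graph automorphism (\prettyref{def:groups-and-actions}) the set $-S=S\cdot\rho$ is automatically stable, so this just means $S\intersect(-S)=\emptyset$. For $k=1$, $m=2n+1$, I would take $S=\set{1,3,\dots,2n-1}$, whose negative $\set{2,4,\dots,2n}$ is disjoint from it because negation reverses parity modulo the odd number~$m$. For $k=2$, $m=2n+2$, negation fixes precisely $0$ and $n+1$ and is free on the remaining $2n$ residues, which form $n$ two-element orbits; here I would take for $S$ the stable transversal consisting of the odd residues in $\set{1,\dots,n}$ together with $-j$ for the even $j\in\set{1,\dots,n}$, and check that $S$ is stable (its two arcs are separated by gaps $\ge 3$ and by the removed residue $0$ at the wrap) and disjoint from the complementary transversal $-S$.

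For the \textbf{index implication} with $r\ge 1$ I would assemble pieces already available. By \prettyref{prop:test-graph}, $\alpha^{r+k}\in\Ind_{C_2}(\Hom(K_2,G))$ gives $\wbar_r(\xi_{n,k})\in\Ind_{D_{2m}}(\Hom(SG_{n,k},G))$; restricting along $\phi_\rho\colon C_2\to D_{2m}$, i.e.\ along the inclusion of $\Gamma$, exactly as in the proof of \prettyref{cor:test-graph-gamma}, yields $\phi_\rho^\ast(\wbar_r(\xi_{n,k}))\in\Ind_{C_2}(\Hom(SG_{n,k},G))$ once $\Gamma$ is identified with $C_2$. It then remains to see that $\phi_\rho^\ast(\wbar_r(\xi_{n,k}))=\alpha^r$, and here I would bypass \prettyref{sec:calc-sw}: by \prettyref{def:Wnk} (equations \prettyref{eq:def-rho-odd}, \prettyref{eq:def-rho-even}) the element $\rho$ acts on $W_{n,k}$, for $k\in\set{1,2}$, by a diagonal orthogonal matrix with exactly one entry $-1$, so $\phi_\rho^\ast(\xi_{n,k})\isom\eps_k\dplus\gamma$ with $\gamma$ the canonical line bundle over $\B C_2$; hence $w(\phi_\rho^\ast\xi_{n,k})=1+\alpha$, and $\phi_\rho^\ast(\wbar_r(\xi_{n,k}))$ is the degree-$r$ part of $(1+\alpha)^{-1}$, namely $\alpha^r$. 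This is precisely the point at which the hypothesis $k\in\set{1,2}$ enters: for any other $k$ the matrix representing $\rho$ has at least two entries $-1$, and then $\phi_\rho^\ast(\wbar_r(\xi_{n,k}))$ vanishes for suitable~$r$. The case $r=0$ I would treat directly: $1\in\Ind_{C_2}(\Hom(SG_{n,k},G))$ would force $\Hom(SG_{n,k},G)=\emptyset$, so under the standing hypothesis $\Hom(SG_{n,k},G)\ne\emptyset$ the claim for $r=0$ amounts to $\alpha^k\notin\Ind_{C_2}(\Hom(K_2,G))$; and composing the $C_2$-map $\Sphere(W_{n,k})\to_{C_2}\Hom(K_2,SG_{n,k})$ of \prettyref{thm:op-on-box} with $\Hom(K_2,h)$ for a homomorphism $h\colon SG_{n,k}\to G$ produces a $C_2$-map $\Sphere^k\to_{C_2}\Hom(K_2,G)$, whence $\Ind_{C_2}(\Hom(K_2,G))\subset\Ind_{C_2}(\Sphere^k)=(\alpha^{k+1})$ by \prettyref{prop:Ind-mon}.

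For the \textbf{test-graph conclusion}, given a loopless $G$ and $r\ge 0$ admitting an equivariant map $\E_r\Gamma\to_\Gamma\Hom(SG_{n,k},G)$ (so in particular $\Hom(SG_{n,k},G)\ne\emptyset$), \prettyref{prop:Ind-coind} gives $\alpha^r\notin\Ind_{C_2}(\Hom(SG_{n,k},G))$, the contrapositive of the implication above gives $\alpha^{r+k}\notin\Ind_{C_2}(\Hom(K_2,G))$, and since $\alpha^{r+k}\in\Ind_{C_2}(\Hom(K_2,K_{r+k+1}))=\Ind_{C_2}(\Sphere^{r+k-1})$ this forces $\chi(G)\ge r+k+2=r+\chi(SG_{n,k})$, just as in the proof of \prettyref{cor:test-graph}; for $G$ with $\Hom(SG_{n,k},G)=\emptyset$ the defining condition of a $\Gamma$-test graph is vacuous. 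The only step needing a genuine construction is the explicit stable transversal in the freeness part for $k=2$, which requires a little parity bookkeeping but no new idea; the rest is a direct assembly of \prettyref{thm:op-on-box}, \prettyref{prop:test-graph}, \prettyref{lem:g}, and a one-line Stiefel-Whitney computation.
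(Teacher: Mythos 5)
Your proof is correct and follows essentially the same route as the paper's: freeness via \prettyref{lem:g}, the index implication via \prettyref{prop:test-graph} followed by restriction along $\phi_\rho$ and the direct observation that $\rho$ acts on $W_{n,k}$ by $\diag(1,-1)$ (for $k=1$) or $\diag(1,1,-1)$ (for $k=2$), and the test-graph conclusion as in \prettyref{cor:test-graph}/\prettyref{cor:test-graph-gamma}. Two small points in your favour. First, for the freeness step the paper exhibits the set $\set{1,\dots,n}$ as a vertex moved to a disjoint set by~$\rho$, but for $n\ge 2$ this set is not stable (it contains $\set{1,2}$) and hence is not a vertex of $SG_{n,k}$; your choices ($\set{1,3,\dots,2n-1}$ for $k=1$ and the explicit transversal for $k=2$) are actually in $V(SG_{n,k})$ and fix this slip. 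Second, \prettyref{prop:test-graph} is stated only for $r\ge1$; your separate handling of $r=0$ (observing that $\Hom(SG_{n,k},G)\ne\emptyset$ forces $\Ind_{C_2}(\Hom(K_2,G))\subset(\alpha^{k+1})$, so the hypothesis of the implication is vacuous) closes a gap the paper leaves implicit. Otherwise the arguments coincide.
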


\begin{cor}\label{cor:n,2}
Let $k\in\set{1,2}$, $n\ge1$, $r\ge0$.  If
$\Hom(SG_{n,k}, G)$ is $(r-1)$-connected, then
$\chi(G)\ge r+k+2$.
\qed
\end{cor}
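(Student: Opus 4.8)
The plan is to run the argument behind \prettyref{prop:test-graph} and \prettyref{cor:test-graph-gamma}, but with the full automorphism group $D_{2m}$ replaced by its subgroup $\Gamma=\set{1,\rho}$, exploiting that for $k\in\set{1,2}$ the element~$\rho$ acts on $W_{n,k}$ with a one-dimensional $(-1)$-eigenspace. I would first settle freeness using \prettyref{lem:g}. As $\rho^2=1$, the only thing to check is that some vertex $S\in V(SG_{n,k})$ satisfies $(S,S\cdot\rho)\in E(SG_{n,k})$, i.e.\ that there is a stable $n$-subset $S\subset\Z_m$ disjoint from~$-S$. For $k=1$, where $m=2n+1$, the set $S=\set{1,3,\dots,2n-1}$ works, with $-S=\set{2,4,\dots,2n}$. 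For $k=2$, where $m=2n+2$, one picks $S$ a little more carefully: choosing from each reflection pair $\set{i,m-i}$, $1\le i\le n$, the element $i$ when $i$ is odd and $m-i$ when $i$ is even automatically gives a set disjoint from its negative, and a short check shows it is stable. Hence $\Gamma$ acts freely on $\real{\Hom(SG_{n,k},G)}$ for every loopless~$G$.

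For the implication, I would restrict the $D_{2m}$-action in \prettyref{thm:op-on-box} to $\Gamma$, obtaining a $(C_2\times\Gamma)$-equivariant map $\Spherenk\to\real{\Hom(K_2,SG_{n,k})}$ with $C_2$ acting antipodally on $\Spherenk$, and then follow the proof of \prettyref{prop:test-graph}: composing with the composition pairing $\Hom(K_2,SG_{n,k})\times_\Gamma\Hom(SG_{n,k},G)\to\Hom(K_2,G)$ and projecting off the second factor yields a $C_2$-equivariant map
\[
\Spherenk\times_\Gamma\real{\Hom(SG_{n,k},G)}\to_{C_2}\real{\Hom(K_2,G)},
\]
where the freeness just established makes a Borel construction on the second factor unnecessary. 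Thus if $\alpha^{r+k}\in\Ind_{C_2}(\Hom(K_2,G))$, then $\alpha^{r+k}\in\Ind_{C_2}(\Spherenk\times_\Gamma\real{\Hom(SG_{n,k},G)})$ by \prettyref{prop:Ind-mon}, and \prettyref{cor:sw} — applied with group $\Gamma$, free $\Gamma$-space $\real{\Hom(SG_{n,k},G)}$, and $\R^{k+1}=W_{n,k}$ with its action restricted to $\Gamma$ — gives $\wbar_r(\zeta)\in\Ind_\Gamma(\Hom(SG_{n,k},G))$, where $\zeta=W_{n,k}\times_\Gamma\E\Gamma\to\B\Gamma$. It then remains to compute $\wbar_r(\zeta)$: for $k\in\set{1,2}$ the element~$\rho$ acts on $W_{n,k}$ with eigenvalue $+1$ of multiplicity~$k$ and eigenvalue $-1$ of multiplicity~$1$, so $\zeta$ is the Whitney sum of a trivial rank-$k$ bundle and the canonical line bundle over $\B\Gamma\homot\RP^\infty$; hence $w(\zeta)=1+\alpha$ and $\wbar(\zeta)=(1+\alpha)^{-1}=\sum_{j\ge0}\alpha^j$, so $\wbar_r(\zeta)=\alpha^r$. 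Identifying $\Gamma\isom C_2$, this is the asserted implication.

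The ``in particular'' clause is then immediate. Since $\phi_\rho^\ast\wbar_r(\xi_{n,k})=\wbar_r(\zeta)=\alpha^r\ne0$ for all $r\ge1$, \prettyref{cor:test-graph-gamma} applies with $\Gamma=\set{1,\rho}$ and shows that $SG_{n,k}$ is a $\Gamma$-test graph; alternatively this can be re-derived from the implication just proved together with \prettyref{prop:Ind-coind} and $\chi(SG_{n,k})=k+2$, exactly as in the proof of \prettyref{cor:test-graph}.

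The only step where I expect to need some care is the freeness claim for $k=2$: the obvious candidate $S=\set{1,3,\dots,2n-1}$ is not disjoint from~$-S$ once $m$ is even, so $S$ must be chosen with a bit more thought; everything else is bookkeeping modelled on proofs already in the paper. It is worth noting that the restriction to $k\in\set{1,2}$ is essential here — these are exactly the values for which $\wbar_r$ of the $\phi_\rho^\ast$-pullback of $\xi_{n,k}$ is nonzero for all~$r$, while for $k\ge3$ the $(-1)$-eigenspace of~$\rho$ on $W_{n,k}$ is larger and $\wbar_r$ of that pullback vanishes for suitable~$r$.
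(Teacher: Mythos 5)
Your proof is correct and follows essentially the same route as the paper's: the corollary is deduced from \prettyref{thm:n,2} via \prettyref{prop:gamma-homotopy}, by restricting the $D_{2m}$-action to the subgroup $\Gamma=\set{1,\rho}$, applying the index/Stiefel--Whitney machinery of \prettyref{prop:test-graph} and \prettyref{cor:sw}, and computing that the restriction of $w(\xi_{n,k})$ to $H^\ast(\Gamma;\Z_2)$ is $1+\alpha$. One small remark in your favour: your choice of stable witness for the freeness of the $\Gamma$-action is more careful than the one in the paper's proof of \prettyref{thm:n,2}, where the set $\set{1,2,\dots,n}$ is offered even though it is not stable for $n\ge2$; your $\set{1,3,\dots,2n-1}$ for $k=1$ and the alternating selection from the reflection pairs $\set{i,m-i}$ for $k=2$ repair this.
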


\begin{rem}
The graph $SG_{n,1}$ is a cyclic graph with $2n+1$ vertices.  For $k=1$, 
\prettyref{cor:n,2} is thus the result
of~\cite{babson-kozlov-ii}.  Our proof of \prettyref{thm:n,2}
here is a reformulation of the
proof in~\cite{hom-loop}, using only slightly different tools.

The case $k=2$ was also considered in \cite[Ex.~4.15]{hom-loop}, but in a
less systematic way, and the group action considered there corresponds
to choosing the subgroup $\set{1,\sigma^{n+1}}$ instead of
$\set{1,\rho}$.  The reader can check that in the following proof that
choice works only for even~$n$, which is the case that was proven in
\cite{hom-loop}.  Also, there Kneser graphs were considered, but that
is a less significant difference, as the proof can easily be adapted
to stable Kneser graphs.
\end{rem}

\begin{proof}[Proof of \prettyref{thm:n,2}]
To see that the action on $\Hom(SG_{n,k},G)$ is free, we have to that
show there is a vertex of~$SG_{n,k}$ that is mapped to a neighbour
by~$\rho$.  Indeed, $\set{1,2,\dots,n}$ and 
$\set{1,\dots,n}\cdot\rho=\set{n+k,\dots,2n+k-1}$ are disjoint.

In view of \prettyref{prop:test-graph}, all that remains to be shown
is that the restriction of $\wbar_r(\xi_{n,k})$ to
$H^\ast(\Gamma;\Z_2)$ equals $\alpha^r$ for all $r\ge0$.  This is
equivalent to the restriction of $w(\xi_{n,k})$ equalling $1+\alpha$.
This follows from the calculations done in \prettyref{sec:calc-sw}.
Alternatively we notice that $\rho$ acts on $W_{n,k}$ by 
$\diag(1,-1)$ for $k=1$ or
$\diag(1,1,-1)$ for $k=2$.  
\end{proof}

\begin{thm}\label{thm:2s,4}
Let $s\ge1$, and $m=4(s+1)$.  Set $m=2^at$ with $t$~odd and 
let $\Gamma\isom C_{2^a}$ be the subgroup of $D_{2m}$ generated
by~$\sigma^t$.  Let $G$ be a loopless graph such that $\Hom(SG_{2s,4},
G)\ne\emptyset$.  Then $\Gamma$ acts freely on $\Hom(SG_{2s,4}, G)$ and
for any  $r\ge0$ and $\beta$ the non-zero element of 
$H^{r}(C_{2^a};\Z_2)$ we have that
\[\alpha^{r+4}\in\Ind_{C_2}(\Hom(K_2, G))
\text{\ \,implies\ \,}
\beta\in\Ind_\Gamma(\Hom(SG_{2s,4}, G)).\]
In particular, $SG_{2s, 4}$ is a $\Gamma$-test graph.
\end{thm}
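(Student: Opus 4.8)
The plan is to mimic the proof of \prettyref{thm:n,2}: freeness of the action will be checked by the combinatorial criterion \prettyref{lem:g}, and the cohomological statement will follow from \prettyref{prop:test-graph} together with a restriction-of-index step and the Stiefel--Whitney calculation of \prettyref{sec:calc-sw}. Throughout write $m=2n+k=4(s+1)=2^at$ with $t$ odd; since $4\mid m$ we have $a\ge 2$, so $H^\ast(\Gamma;\Z_2)=H^\ast(C_{2^a};\Z_2)\isom\Z_2[x,u]/(x^2)$ with $\adeg x=1$, $\adeg u=2$, and in particular $H^r(\Gamma;\Z_2)\isom\Z_2$ for every $r\ge0$, so $\beta$ is simply the unique nonzero class in $H^r(\Gamma;\Z_2)$.

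For freeness I would use that $\Gamma\isom C_{2^a}$, being cyclic of $2$-power order, has a unique element of order~$2$, namely $\sigma^{t2^{a-1}}=\sigma^{m/2}$, and that this element lies in every nontrivial subgroup of~$\Gamma$; hence for each $\gamma\in\Gamma\wo\set1$ there is a $k\ge1$ with $\gamma^k=\sigma^{m/2}$. By \prettyref{lem:g} it therefore suffices to exhibit a single vertex $v$ of $SG_{2s,4}$ with $v\intersect(v\cdot\sigma^{m/2})=\emptyset$, i.e.\ a stable $2s$-subset of $\Z_{4s+4}$ containing no antipodal pair $\set{i,i+m/2}$. The set $v=\set{1,3,\dots,2s-1}\cup\set{2s+2,2s+4,\dots,4s}$ works: it has $2s$ elements, all its cyclic gaps have length $\ge2$, its ``odd part'' lies in $\set{1,\dots,2s-1}$ while its ``even part'' lies in $\set{2s+2,\dots,4s}$, and from this one checks directly that $v+m/2$ is disjoint from~$v$. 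So $\Gamma$ acts freely on $\real{\Hom(SG_{2s,4},G)}$ for every loopless~$G$.

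For the main implication, \prettyref{prop:test-graph} (case $k=4$) gives that $\alpha^{r+4}\in\Ind_{C_2}(\Hom(K_2,G))$ implies $\wbar_r(\xi_{2s,4})\in\Ind_{D_{2m}}(\Hom(SG_{2s,4},G))$, and restricting the index along the inclusion $i\colon\Gamma\hookrightarrow D_{2m}$ puts $i^\ast(\wbar_r(\xi_{2s,4}))$ into $\Ind_\Gamma(\Hom(SG_{2s,4},G))$, exactly as in \prettyref{cor:test-graph-gamma}. So everything comes down to the identity $i^\ast(\wbar_r(\xi_{2s,4}))=\beta$, equivalently: $i^\ast(\wbar(\xi_{2s,4}))$ has a nonzero component in every degree. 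The map $i$ factors through the rotation subgroup $C_m\le D_{2m}$, and on $\Z_2$-cohomology the composite sends $x\mapsto x$, $y\mapsto x$, $u\mapsto u$ (the first step is the map $j^\ast$ recorded in \prettyref{sec:cohomology}, the second is the isomorphism of \prettyref{lem:tm}, since $\Gamma$ is the $2$-Sylow subgroup of $C_m$). Since $n=2s$ is even and $k=4=2\cdot2$, \prettyref{prop:even-4-w} gives $w(\xi_{2s,4})=(1+y)(1+x+y+u)(1+x+y)$; applying the ring homomorphism $i^\ast$ the two occurrences of $x+y$ become $x+x=0$, so $i^\ast(w(\xi_{2s,4}))=(1+x)(1+u)$ in $\Z_2[x,u]/(x^2)$, whence $i^\ast(\wbar(\xi_{2s,4}))=(1+x)(1+u+u^2+u^3+\cdots)$, whose degree-$r$ part is $u^{r/2}$ for even~$r$ and $xu^{(r-1)/2}$ for odd~$r$ --- nonzero for every $r\ge0$, as needed.

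Finally, for the ``in particular'' statement one argues as in \prettyref{cor:test-graph}: given a loopless~$G$ and an equivariant map $\E_r\Gamma\to_\Gamma\real{\Hom(SG_{2s,4},G)}$, one has $\Hom(SG_{2s,4},G)\ne\emptyset$ and, by \prettyref{prop:Ind-coind}, $\Ind_\Gamma(\Hom(SG_{2s,4},G))\intersect H^r(\Gamma;\Z_2)=0$; so $\beta$ is not in the index, hence by the implication just proved $\alpha^{r+4}\notin\Ind_{C_2}(\Hom(K_2,G))$, while $\alpha^{r+4}\in\Ind_{C_2}(\Hom(K_2,K_{r+5}))=\Ind_{C_2}(\Sphere^{r+3})$, forcing $\chi(G)\ge r+6=r+\chi(SG_{2s,4})$. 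The one genuinely nontrivial point is the restriction computation of the Stiefel--Whitney classes; the decisive phenomenon is that the two degree-one generators $x,y$ of $H^1(D_{2m};\Z_2)$ have the same image under $i^\ast$, so that the degree-one contribution to $w(\xi_{2s,4})$ cancels and $i^\ast(w(\xi_{2s,4}))$ reduces to the product of a $1$-dimensional class with the polynomial generator $1+u$ --- which is precisely what makes $i^\ast(\wbar_r(\xi_{2s,4}))$ survive in every degree and this $C_{2^a}$-action usable. The freeness verification and the concluding deduction are routine.
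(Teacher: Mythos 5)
Your proof is correct and essentially the same as the paper's: both verify freeness via \prettyref{lem:g} by reducing to the unique involution $\sigma^{m/2}$ of the cyclic $2$-group $\Gamma$ and exhibiting a stable $2s$-set disjoint from its $\sigma^{m/2}$-translate (your set $v$ is the paper's $S$ shifted by $m/2$), and both obtain the index statement by pulling back $w(\xi_{2s,4})$ from \prettyref{prop:even-4-w} along $\Gamma\hookrightarrow C_m\hookrightarrow D_{2m}$, noting that $x$ and $y$ both restrict to $x$ so the restricted total class collapses to $(1+x)(1+u)$ and hence $i^\ast(\wbar_r(\xi_{2s,4}))$ is the nonzero class in each degree. (As a side remark, the paper's exponent $\gamma^{2^{b-a-1}}$ appears to be a typo for $\gamma^{2^{a-b-1}}$; your phrasing via ``some power of $\gamma$ equals $\sigma^{m/2}$'' sidesteps this.)
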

\begin{cor}\label{cor:2s,4}
Let $s\ge1$, $r\ge0$, and $G$ a graph.
If $\Hom(SG_{2s,4}, G)$ is $(r-1)$-connected, then $\chi(G)\ge r+6$.
\qed
\end{cor}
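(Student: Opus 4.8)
The plan is to establish the two assertions of the statement — that $\Gamma$ acts freely on $\Hom(SG_{2s,4},G)$, and that $\alpha^{r+4}\in\Ind_{C_2}(\Hom(K_2,G))$ implies $\beta\in\Ind_\Gamma(\Hom(SG_{2s,4},G))$ — and then to deduce the last sentence exactly as in the proof of \prettyref{cor:test-graph}. Namely, if $\E_r\Gamma\to_\Gamma\Hom(SG_{2s,4},G)$ exists, then $\Ind_\Gamma(\Hom(SG_{2s,4},G))\intersect H^r(\Gamma;\Z_2)=0$ by \prettyref{prop:Ind-coind}, hence $\beta\notin\Ind_\Gamma(\Hom(SG_{2s,4},G))$, so by the implication $\alpha^{r+4}\notin\Ind_{C_2}(\Hom(K_2,G))$; since $\alpha^{r+4}\in\Ind_{C_2}(\Hom(K_2,K_{r+5}))$, monotonicity (\prettyref{prop:Ind-mon}) rules out a homomorphism $G\to K_{r+5}$, giving $\chi(G)\ge r+6=\chi(SG_{2s,4})+r$.

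For the freeness I would apply \prettyref{lem:g}. Since $\Gamma\isom C_{2^a}$ is a cyclic $2$-group, every nontrivial subgroup of $\Gamma$ contains the unique subgroup of order~$2$, which is generated by $(\sigma^t)^{2^{a-1}}=\sigma^{m/2}$; hence for each $\gamma\in\Gamma\wo\set1$ some power $\gamma^\ell$ equals $\sigma^{m/2}$. It therefore suffices to exhibit a single stable $2s$-set $S\subset\Z_m$ with $S\intersect(S+m/2)=\emptyset$, for then $(S,S\gamma^\ell)=(S,S+m/2)$ is an edge of $SG_{2s,4}$ for every $\gamma\ne1$. I would take
\[
S=\set{1,3,\dots,2s-1}\cup\set{2s+2,2s+4,\dots,4s},
\]
which has $2s$ elements; writing $S=A\cup B$ for the two displayed summands and using $m=4s+4$, $m/2=2s+2$, one checks directly that $S$ is stable and that $A$, $A+m/2$ are sets of odd numbers in disjoint ranges while $B$, $B+m/2$ (reduced mod~$m$) are sets of even numbers in disjoint ranges, so $S\intersect(S+m/2)=\emptyset$.

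For the cohomological implication, \prettyref{prop:test-graph} applied with $(n,k)=(2s,4)$ shows that $\alpha^{r+4}\in\Ind_{C_2}(\Hom(K_2,G))$ forces $\wbar_r(\xi_{2s,4})\in\Ind_{D_{2m}}(\Hom(SG_{2s,4},G))$, and restricting along $i\colon\Gamma\hookrightarrow D_{2m}$ as in the proof of \prettyref{cor:test-graph-gamma} gives $i^\ast\wbar_r(\xi_{2s,4})\in\Ind_\Gamma(\Hom(SG_{2s,4},G))$. So it remains to identify $i^\ast\wbar_r(\xi_{2s,4})$ with $\beta$ for all $r$. Here $m=4(s+1)\equiv0\pmod4$ and $n=2s$ is even, so \prettyref{prop:even-4-w} gives $w(\xi_{2s,4})=(1+y)(1+x+y+u)(1+x+y)$. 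The map $i^\ast$ factors as the homomorphism $j^\ast\colon H^\ast(D_{2m};\Z_2)\to H^\ast(C_m;\Z_2)$, $x\mapsto x$, $y\mapsto x$, $u\mapsto u$, followed by the isomorphism $H^\ast(C_m;\Z_2)\isom H^\ast(C_{2^a};\Z_2)$ of \prettyref{lem:tm}; since $x+x=0$, this yields $i^\ast w(\xi_{2s,4})=(1+x)(1+u)$ in $H^\ast(C_{2^a};\Z_2)=\Z_2[x,u]/(x^2)$. Using $x^2=0$,
\[
i^\ast\wbar(\xi_{2s,4})=\bigl((1+x)(1+u)\bigr)^{-1}=(1+x)\sum_{j\ge0}u^j,
\]
so $i^\ast\wbar_r(\xi_{2s,4})$ equals $u^{r/2}$ for even $r$ and $xu^{(r-1)/2}$ for odd $r$ — in each case the unique nonzero element $\beta$ of $H^r(C_{2^a};\Z_2)$.

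The main obstacle, such as it is, sits in the third step: checking that $i^\ast$ sends $y$ to $x$ and leaves $u$ nonzero, so that $i^\ast w(\xi_{2s,4})$ collapses to $(1+x)(1+u)$, whose inverse is supported in every degree. This is exactly the feature that makes $k=4$ with $n$ even work and that would fail for a subgroup isomorphic to $C_2$, on which $u$ restricts to zero; it is the reason the group $\Gamma$, carrying the polynomial generator $u\in H^2(\Gamma;\Z_2)$, must be used. The remaining ingredients — the explicit verification that $S$ is stable and disjoint from its translate, and the formal deduction of the $\Gamma$-test-graph property — are routine once \prettyref{prop:test-graph}, \prettyref{prop:even-4-w} and \prettyref{lem:g} are in hand.
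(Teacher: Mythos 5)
Your proposal is correct and follows essentially the same route as the paper: it re-proves Theorem~\ref{thm:2s,4} — freeness via Lemma~\ref{lem:g} using the unique order-$2$ element $\sigma^{m/2}$ of $\Gamma$, and the cohomological implication via Proposition~\ref{prop:test-graph}, the computation $i^\ast w(\xi_{2s,4})=(1+x)(1+u)$ from Proposition~\ref{prop:even-4-w}, and Lemma~\ref{lem:tm} — and then derives the corollary exactly as in the proof of Corollary~\ref{cor:test-graph}. The only (cosmetic) divergence is the explicit stable set: your $S=\set{1,3,\dots,2s-1}\cup\set{2s+2,\dots,4s}$ is precisely the translate by $m/2$ of the paper's $\set{2j\colon 0\le j<s}\cup\set{2j+2s+3\colon 0\le j<s}$, so it serves the same purpose.
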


\begin{proof}[Proof of \prettyref{thm:2s,4}]
\def\bar{\boldsymbol}
We use \prettyref{lem:g} to show that the action of~$\Gamma$ on
$\Hom(SG_{2s,4},G)$ is free.  Let $1\ne\gamma\in\Gamma$.  We can write
$\gamma=\sigma^{t2^bu}$ with $u$~odd and $b<a$.  Then
$\gamma^{2^{b-a-1}}=\sigma^{2^{a-1}tu}=\sigma^{\frac m2}=\sigma^{2s+2}$.  Now setting
$S=\set{2j\colon 0\le j<s}\unite\set{2j+2s+3\colon 0\le j<s}$
we have $(S,S\cdot \gamma^{2^{b-a-1}})\in E(SG_{2s,4})$.

We have calculated $w(\xi_{2s,4})$ in \prettyref{prop:even-4-w}.
If $j\colon \Gamma\to D_{2m}$ is the inclusion map of
\prettyref{eq:p-and-j}, then
\begin{align*}
j^{\ast}(w(\xi_{2s,4}))&=(1+x)(1+2x+u)(1+2x)=(1+x)(1+u)\in H^\ast(\Gamma;\Z_2)
\intertext{and}
j^{\ast}(\wbar(\xi_{2s,4}))&=(1+x)^{-1}(1+u)^{-1}
=(1+x)\sum_{i\ge0}u^i,
\end{align*}
i.e.\ $j^\ast(\wbar_{2\ell})(\xi_{2s,4}))=u^\ell$,
$j^\ast(\wbar_{2\ell+1}(\xi_{2s,4}))=xu^\ell$.
The proposition now follows from \prettyref{prop:test-graph}.
\end{proof}

\section{Borsuk graphs}
We introduce the Borsuk graphs of spheres and show their close
relationship to stable Kneser graphs.

\begin{defn}\label{def:borsuk}
Let $(X, d)$ be a metric space with an isometric 
$C_2$-action and $\eps>0$.  We
define the \emph{$\eps$-Borsuk graph of $X$}, $B_\eps(X)$, as follows. 
The vertex set of $B_\eps(X)$ is the
set of all points of~$X$ and $x\nbo y\iff d(x,\tau y)<\eps$.
\end{defn}

\begin{rem}
In \cite{csorba-box} and \cite{anton-cas} a discrete analogue of the
$\eps$-Borsuk graph has been investigated.  Let $L$ be a $C_2$-cell
complex together with a homeomorphism $\real L\homeo_{C_2} X$.  We
denote by $L^1$ the reflexive graph whose vertices are the $0$-cells
of~$L$ with two vertices being adjacent if there is a cell in~$L$ of
which both of them are faces.  We then construct the graph
$K_2\times_{C_2}L^1$, which is loopless if the action on $L$ is free.
We can identify the vertex $[(0,x)]$ of $K_2\times_{C_2}L^1$ with the
$0$-cell $x$ of~$L$.  Its neighbours are all the vertices in the
closed star of $\tau\cdot x$.  Therefore, if the diameter of each cell
of~$L$ is less then~$\eps$, we can regard $K_2\times_{C_2}L^1$ as a
subgraph of~$B_\eps(X)$.

On the other hand, if $\eps$ is such that each $\eps$-ball is
contained in the open star of a vertex of~$L$, and such an $\eps>0$
always exists if $X$ is compact, then we can assign to each point 
$x$ of~$X$ a vertex $v$ of~$L$ such that the $\eps$-ball around~$x$
is contained in the open star of~$v$, and this defines a graph homomorphism
$B_\eps(X)\to K_2\times_{C_2}L^1$.
\end{rem}

\begin{rem}\label{rem:B-to-SG}
In \prettyref{thm:op-on-box} we have constructed a $C_2$-equvariant
map $\Sphere^k\to\Hom(K_2, SG_{n,k})$ 
using a cell complex, arising from a hypersphere arrangment on $\Sphere^k$,
with face poset $\mathcal L(C^{m,k+1})$.
The map is defined via a poset map
\begin{equation}
\mathcal L(C^{m,k+1})\to_{C_2} \Hom(K_2,SG_{n,k})
\end{equation}
which maps atoms to atoms.  Restricting to these, we obtain an
equivariant graph homomorphsism
\begin{equation}\label{eq:LtoKSG}
\mathcal L(C^{m,k+1})^1 \to_{C_2} [K_2,SG_{n,k}].
\end{equation}
The first graph is defined as in the preceding remark, the second
graph is the exponential graph of functions from $K_2$ to~$SG_{n,k}$.
Matters like these are treated in more detail in~\cite{anton-cas}.
The graph homomorphism \prettyref{eq:LtoKSG} in turn defines a graph
homomorphism
\begin{equation}\label{eq:LKtoSG}
K_2\times_{C_2}\mathcal L(C^{m,k+1})^1 \to SG_{n,k}.
\end{equation}
This map is also easy to describe directly, using the notation of
\prettyref{def:C-to-Hom} it is given by
$[(l,s)]\mapsto S_l(s)$.
In the preceding remark we have seen that for small enough~$\eps$
there is a graph homomorphism 
\begin{equation}\label{eq:BKtoL}
B_\eps(\Sphere^k)\to K_2\times_{C_2}\mathcal L(C^{m,k+1})^1.
\end{equation}
Now the map~\prettyref{eq:LtoKSG} is also $D_{2m}$-equivariant, and
therefore so is~\prettyref{eq:LKtoSG}.  Also, the map
\prettyref{eq:BKtoL} can be chosen in such a way that it is
$D_{2m}$-equivariant.  Consequently, there is an equivariant graph
homomorphism
\begin{equation*}
B_\eps(\Sphere(W_{n,k}))\to_{D_{2m}} SG_{n,k},
\end{equation*}
if $\eps>0$ is small enough.  We will now work towards a graph homomorphism
in the other direction.
\end{rem}

\begin{lem}
Let $m=2n+k$, $S\in V(SG_{n,k})$ and $(v_i)_{0\le i<m}$ a system of
vectors in~$\R^{k+1}$ realizing $C^{m,k+1}$.  Then
\[\sum_{i\in S}(-1)^iv_i\ne0.\]
\end{lem}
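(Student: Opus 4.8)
The plan is to argue by contradiction, turning a vanishing signed sum into a statement that $S$ is not stable. So suppose $\sum_{i\in S}(-1)^iv_i=0$. Setting $\lambda_i=(-1)^i$ for $i\in S$ and $\lambda_i=0$ otherwise gives a nonzero $\lambda\in\R^m$ with $\sum_i\lambda_iv_i=0$, so the sign vector $s=\sign\lambda$, which has $s_i=(-1)^i$ for $i\in S$ and $s_i=0$ otherwise, lies in $\mathcal V(C^{m,k+1})$. By the characterization of the vectors of the alternating oriented matroid recalled before \prettyref{def:C-to-Hom}, there are indices $i_0<i_1<\dots<i_{k+1}$ in $\set{0,\dots,m-1}$ with $s_{i_j}s_{i_{j+1}}=-1$ for $0\le j\le k$. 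Since $s$ vanishes off $S$, all the $i_j$ lie in $S$, and $s_{i_j}s_{i_{j+1}}=-1$ means precisely that $i_j$ and $i_{j+1}$ have opposite parity.

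Next I would record the gap data of $S$. Write $S=\set{s_1<s_2<\dots<s_n}$ and put $g_l=s_{l+1}-s_l$ for $1\le l\le n-1$ together with the cyclic gap $g_n=m-s_n+s_1$; then $g_1+\dots+g_n=m=2n+k$, and stability of $S$ as a subset of $\Z_m$ is exactly the condition $g_l\ge 2$ for all $1\le l\le n$. Now fix $j\in\set{0,\dots,k}$ and write $i_j=s_{a_j}$, $i_{j+1}=s_{a_{j+1}}$ with $a_j<a_{j+1}$. The differences $g_{a_j},g_{a_j+1},\dots,g_{a_{j+1}-1}$ sum to $i_{j+1}-i_j$, which is odd, so at least one of them is odd. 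The index sets $\set{a_j,\dots,a_{j+1}-1}$ for $j=0,\dots,k$ are pairwise disjoint subsets of $\set{1,\dots,n-1}$, so altogether at least $k+1$ of the gaps $g_1,\dots,g_{n-1}$ are odd.

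Finally I would cash this in: an odd gap is $\ge 3$, so estimating $g_1,\dots,g_{n-1}$ with at least $k+1$ of them $\ge 3$ and the remaining ones $\ge 2$ gives $g_1+\dots+g_{n-1}\ge 2(n-1)+(k+1)=m-1$, hence $g_n\le 1$, contradicting $g_n\ge 2$. Therefore $\sum_{i\in S}(-1)^iv_i\ne 0$. The argument is short; the only place needing attention is the middle step, where one must see that the $k+1$ forced odd gaps are among the \emph{linear} gaps $g_1,\dots,g_{n-1}$, since the cyclic gap $g_n$ is never crossed by the increasing sequence $i_0<\dots<i_{k+1}$, after which the budget identity $\sum_{l=1}^n(g_l-2)=k$ immediately yields the contradiction.
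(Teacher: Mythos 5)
Your proof is correct and follows essentially the same route as the paper's: both start from the observation that a vanishing signed sum produces a vector of the alternating oriented matroid, invoke the characterization of such vectors as sign patterns with $k+2$ indices of alternating parity, and then derive a contradiction with stability by a parity/counting argument. The paper compresses the last step into the single inequality $m\ge 2\card S+k+1$; your gap bookkeeping (at least $k+1$ of the $n-1$ linear gaps are odd hence $\ge 3$, forcing $g_n\le 1$) is just a more explicit rendering of exactly that estimate, and you correctly note that the forced odd gaps live among the linear gaps because the $i_j$ are taken increasing in $\{0,\dots,m-1\}$.
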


\begin{proof}
Assume that $S\subset\set{0,\dots,m-1}$ and $\sum_{i\in
  S}(-1)^iv_i=0$.  Then $(s_i)_{0\le i<m}$ with $s_i=0$ for $i\notin
S$ and $s_i=(-1)^i$ for $i\in S$ is a vector of~$C^{m,k+1}$. Therefore
there are $j_0<j_1<\dots<j_{k+1}$ with $\set{j_s}\subset S$ and
$(-1)^{j_{s+1}}=(-1)^{j_s+1}$ for $0\le s\le k$.  If $S$ is a stable
set, i.e.\ one which does not contain consecutive elements, 
then this implies $m\ge 2\card S+k+1$ and hence $\card S<n$.
\end{proof}

This justifies the following definition.
\begin{defn}\label{def:v}
For  $S\in V(SG_{n,k})$ let 
\[v(S)\deq\frac{\sum_{i\in S}(-1)^iv_i}{\norm{\sum_{i\in S}(-1)^iv_i}},\]
where $v_i\in\R^{k+1}$ is as in the proof of~\prettyref{thm:op-on-box}.
\end{defn}

We will also need a continuous version of the preceding lemma.

\begin{lem}\label{lem:fourier}
Let $r\ge0$, $k=2r$. We set
\begin{align*}
w(t)&=(1,\exp(it), \exp(2it),\dots,\exp(rit))\in\R\times\C^r=\R^{k+1}.
\end{align*}
Then for all $0\le a_1\le a_2\le\dots\le a_k\le 2\pi$
we have
\begin{equation}\label{eq:fourier}
\sum_{s=0}^{k-1}(-1)^s\int_{a_s}^{a_{s+1}}w(t)\,dt
\ne0,
\end{equation}
where we set $a_0=a_k-2\pi$.
\end{lem}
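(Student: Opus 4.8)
The plan is to recognise the left-hand side of \prettyref{eq:fourier} as the integral of $w$ against a $\pm1$-valued step function on the circle and then to apply a Chebyshev-type argument for trigonometric polynomials. Working on $\R/2\pi\Z$, let $g$ be the function which equals $(-1)^s$ on the arc $(a_s,a_{s+1})$ for $s=0,\dots,k-1$. Since $w$ is $2\pi$-periodic and $[a_0,a_k]$ is a fundamental domain, $\sum_{s=0}^{k-1}(-1)^s\int_{a_s}^{a_{s+1}}w(t)\,dt=\int_0^{2\pi}g(t)w(t)\,dt$. The coordinates of $w$ are $1,e^{it},\dots,e^{rit}$, so this vector vanishes precisely when $\int_0^{2\pi}g(t)e^{ijt}\,dt=0$ for $j=0,1,\dots,r$; as $g$ is real-valued, this is equivalent to $g$ being orthogonal to every trigonometric polynomial of degree at most~$r$.

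I would then argue by contradiction, assuming that orthogonality. First observe that $g$, being a $\pm1$ step function on the circle, changes sign an even number of times and, by construction, at most $k=2r$ times: the only candidate points are $a_1,\dots,a_{k-1}$ together with the identified point $a_0\equiv a_k$, and coincidences among the $a_i$ merely delete sign changes in pairs. If $g$ has no sign change it is a.e.\ constant and $\int_0^{2\pi}g\,dt=\pm2\pi\ne0$, a contradiction. Otherwise $g$ changes sign exactly at points $b_1<\dots<b_{2\ell}$ with $1\le\ell\le r$, and I would introduce
\[
p(t)=\prod_{i=1}^{2\ell}\sin\frac{t-b_i}{2}.
\]
Grouping the factors in pairs and using $\sin A\sin B=\frac12\bigl(\cos(A-B)-\cos(A+B)\bigr)$ shows that each product $\sin\frac{t-b_{2i-1}}{2}\sin\frac{t-b_{2i}}{2}$ is a trigonometric polynomial of degree~$1$, so $p$ has degree $\le\ell\le r$; being a product of an even number of the sine factors $\sin\frac{t-b_i}{2}$ (each of which changes sign under $t\mapsto t+2\pi$), $p$ is genuinely $2\pi$-periodic, and it has a simple zero, hence a sign change, at each $b_i$ and at no other point of the circle.

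Consequently $gp$ does not change sign on the circle and is not identically zero, so $\int_0^{2\pi}g(t)p(t)\,dt\ne0$; but $\deg p\le r$, so that integral is $0$ by the standing assumption, giving the desired contradiction. The routine parts here are the orthogonality reformulation and the degree bound for~$p$; the points that need care are purely combinatorial bookkeeping, namely checking that the circle function $g$ really has at most $2r$ sign changes in all degenerate cases (repeated $a_i$, or $a_1=0$ and $a_k=2\pi$) and that $p$ realises exactly the sign changes of~$g$. This argument is the continuous counterpart of the discrete lemma on signed sums over stable sets that precedes \prettyref{def:v}, with the alternating sign vector of a stable set replaced by $g$ and ``sign vectors with at most $k$ sign changes'' replaced by ``trigonometric polynomials of degree~$\le r$''.
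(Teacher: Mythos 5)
Your proof is correct and follows a genuinely different route from the paper's, though both ultimately rest on the same Haar/Chebyshev property of the trigonometric system $1,\cos t,\sin t,\dots,\cos rt,\sin rt$ on the circle. The paper integrates coordinate-wise: for $j\ge1$ the $j$-th integral telescopes to a discrete identity $\sum_{s=1}^k(-1)^s\exp(ija_s)=0$, which together with $\sum_{s=1}^k(-1)^s=0$ gives $\sum_{s=1}^k(-1)^s w(a_s)=0$, while the first (real) coordinate gives $\sum_{s=1}^k(-1)^s a_s=\pi$; from the latter one deduces that the moment-curve combination is nontrivial and concludes from the linear independence of any $k+1$ (hence any $k$ or fewer) points on the trigonometric moment curve. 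You instead stay with the integral, recast the hypothesis as orthogonality of a $\pm1$-valued step function $g$ with at most $k$ sign changes on the circle to all trigonometric polynomials of degree at most~$r$, and produce the Chebyshev-type test polynomial $p(t)=\prod_i\sin\frac{t-b_i}{2}$ of degree $\ell\le r$ realizing exactly those sign changes, so that $\int gp\ne0$ is forced while it should vanish. These are dual exploitations of the same fact: the paper uses independence of few evaluation points, you use the existence of low-degree functions with prescribed sign pattern. Your version replaces the paper's multiplicity bookkeeping (required when several $a_s$ coincide, or when $a_1=0$ and $a_k=2\pi$ identify on the circle) with the check that $g$ has an even number, at most $k$, of sign changes -- which you correctly flag as the delicate step, and which is sound. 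Your closing analogy with the discrete lemma $\sum_{i\in S}(-1)^iv_i\ne0$ for stable $S$ is well observed: that lemma rests on the vectors of the alternating oriented matroid having at least $k+2$ sign changes while a stable set supplies too few, and your $g$ is exactly the continuous analogue of the alternating sign vector of a stable set.
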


\begin{proof}
We assume that the expression \prettyref{eq:fourier} is zero and
obtain for $1\le j\le r$
\begin{align*}
0&=\sum_{s=0}^{k-1}(-1)^s\int_{a_s}^{a_{s+1}}\exp(jit)\,dt
=\sum_{s=0}^{k-1}(-1)^{s+1}\frac ij(\exp(ija_{s+1})-\exp(ija_s))
\\&=\frac{2i}{j}\sum_{s=1}^k(-1)^s\exp(ji a_s).
\end{align*}
Since also $\sum_{s=1}^k(-1)^s=0$, we obtain
\begin{equation}\label{eq:proof-fourier}
\sum_{s=1}^k(-1)^s w(a_s)=0.
\end{equation}
Now the $a_s$ do not have to be distinct, but since considering the
first coordinate of the expression in \prettyref{eq:fourier} yields
\[0=\sum_{s=0}^{k-1}(-1)^s(a_{s+1}-a_s)=2\pi+2\sum_{s=1}^r(a_{2s-1}-a_{2s}),\] 
at least one number has to appear with an odd multiplicity in the
system $(a_s)_s$.  It follows that \prettyref{eq:proof-fourier} is a
nontrivial linear combination of vectors on the trigonometric moment
curve $w$, contradicting the linear independence of any $k+1$ such
vectors.
\end{proof}

\begin{prop}\label{prop:eps}
Let $k\ge0$ and $\eps>0$.  Then there is an $N\in\N$ such that
for all $n\ge N$ the function $v\colon V(SG_{n,k})\to\Sphere^k$ 
is a $D_{2m}$-equivariant graph homomorphism
\[SG_{n,k}\to_{D_{2m}} B_\eps(\Spherenk).\]
\end{prop}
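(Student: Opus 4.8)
The plan is to verify the $D_{2m}$-equivariance of $v$ by a direct computation and then to deduce that $v$ is a graph homomorphism into $B_\eps(\Spherenk)$ for large~$n$ from the fact that, for disjoint stable $n$-sets~$S$ and~$T$, the unit vectors $v(S)$ and $v(T)$ become nearly antipodal. Write $N_S\deq\sum_{i\in S}(-1)^iv_i$, so that $v(S)=N_S/\norm{N_S}$; this is well defined because $v_{i+m}=(-1)^mv_i$ by \prettyref{eq:v-props}. For equivariance, using $v_i\cdot\sigma=-v_{i+1}$ and $v_i\cdot\rho=v_{-i}$ from \prettyref{eq:v-props} together with the linearity of the action on $W_{n,k}$, one gets $N_{S\cdot\sigma}=\sum_{j\in S}(-1)^{j+1}v_{j+1}=\sum_{j\in S}(-1)^j(v_j\cdot\sigma)=N_S\cdot\sigma$ and, likewise, $N_{S\cdot\rho}=N_S\cdot\rho$; since the action is orthogonal, $\norm{N_S}$ is $D_{2m}$-invariant, so $v(S\cdot\gamma)=v(S)\cdot\gamma$. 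Also $D_{2m}$ acts on $B_\eps(\Spherenk)$ by graph automorphisms, because it acts on $\Spherenk$ by isometries commuting with the antipodal map.

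It then remains to find $N$ so that $n\ge N$, $S\cap T=\emptyset$ and $S,T$ stable $n$-sets imply $\norm{v(S)+v(T)}<\eps$. I would use two facts. \emph{(a)} $\norm{N_S+N_T}$ is bounded independently of~$n$: by \prettyref{eq:v-even} and \prettyref{eq:v-odd} each coordinate of $\sum_{i\in\Z_m}(-1)^iv_i$ is a geometric sum $\sum_{i=0}^{m-1}(-\zeta)^i$ with $\zeta$ a root of unity satisfying $(-\zeta)^m=1$ and $-\zeta\ne1$, hence equal to~$0$; so with $R\deq\Z_m\setminus(S\cup T)$, which has $k$ elements, $N_S+N_T=\sum_{i\in S\cup T}(-1)^iv_i=-\sum_{i\in R}(-1)^iv_i$ has norm at most $C\deq k\norm{v_0}$. \emph{(b)} $\min_S\norm{N_S}\to\infty$, the minimum over all stable $n$-sets~$S$. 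Granting~\emph{(b)}, choose $n$ so large that $\norm{N_S}\ge L$ for all stable $n$-sets~$S$, with $L>2C/\eps$; then for disjoint $S,T$, writing $N_S=\tfrac12((N_S+N_T)+(N_S-N_T))$ and similarly for $N_T$, and using $\bigl|\,\norm{N_S}-\norm{N_T}\,\bigr|\le\norm{N_S+N_T}\le C$ together with $\norm{N_S-N_T}\le\norm{N_S}+\norm{N_T}$, one obtains
\[
\norm{v(S)+v(T)}\le\frac{\norm{N_S+N_T}}{2}\Bigl(\frac{1}{\norm{N_S}}+\frac{1}{\norm{N_T}}\Bigr)+\frac{\norm{N_S-N_T}}{2}\cdot\frac{\bigl|\,\norm{N_S}-\norm{N_T}\,\bigr|}{\norm{N_S}\,\norm{N_T}}\le\frac{2C}{L}<\eps,
\]
so $v(S)$ and $v(T)$ are adjacent in $B_\eps(\Spherenk)$, as required. (For $k=0$ one has $SG_{n,0}=K_2$ and the claim is immediate, so assume $k\ge1$ below.)

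The substance is fact~\emph{(b)}, and this is the hard part. I would argue as follows. Decompose~$S$ into its maximal runs of elements whose cyclic successor in~$S$ is two steps away; since the $n$ cyclic gaps $g_a$ of $S$ satisfy $\sum_a(g_a-2)=k$, there are at most $k$ such runs. When $k=2r$ is even (so $m$ is even) the sign $(-1)^i$ is constant on each run, even on one that wraps around~$0$, so $N_S$ is a signed sum of at most $k$ terms of the form $\sum_{j=0}^{L-1}v_{e+2j}$. Here $v_j=w(2\pi j/m)$ with $w$ the trigonometric moment curve of \prettyref{lem:fourier}, and $\tfrac{4\pi}{m}\sum_{j=0}^{L-1}v_{e+2j}$ is a Riemann sum of mesh $4\pi/m$ for $\int_{2\pi e/m}^{2\pi(e+2L)/m}w(t)\,dt$; the resulting arcs have total length $4\pi n/m\to2\pi$ and the gaps between them shrink to~$0$. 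Hence, along any sequence $n_j\to\infty$ for which the relative lengths, positions and signs of the (at most~$k$) runs converge — and by compactness every sequence of stable $n_j$-sets has such a subsequence — $\tfrac{4\pi}{m}N_S$ converges to $\sum_i\sigma_i\int_{A_i}^{B_i}w(t)\,dt$, where the intervals $[A_i,B_i]$ partition the circle into at most $k$ arcs and $\sigma_i\in\set{-1,+1}$. Merging adjacent arcs of equal sign turns this into the expression of \prettyref{lem:fourier} (a single arc, or an alternating pattern on $q\le k$ arcs padded with zero-length arcs; recall $k$ is even), which is therefore nonzero. So $\norm{N_S}$ is asymptotic to $\tfrac{m}{4\pi}$ times a quantity bounded below by a positive constant; in particular no subsequence of $\norm{N_S}$ (ranging over all stable $S$) stays bounded, which is~\emph{(b)}. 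For odd $k=2r+1$ the same argument works with $v_j=(\xi^j,\xi^{3j},\dots,\xi^{(2r+1)j})$ and the moment curve $t\mapsto(e^{it},e^{3it},\dots,e^{(2r+1)it})$ (now a wrap-around run contributes one extra sign-piece); the nonvanishing statement needed in place of \prettyref{lem:fourier} has the same proof, using that a nonzero trigonometric polynomial with frequencies among $\pm1,\pm3,\dots,\pm(2r+1)$ has at most $k$ zeros on a half-period.

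The main obstacle is exactly this last estimate: one must handle the run-and-gap bookkeeping carefully (the wrap-around run, the vanishing small gaps), pass to the Riemann-sum limit uniformly in~$S$, and — most delicately — recognize the limiting signed integral of the moment curve as the expression occurring in \prettyref{lem:fourier}, so that it is nonzero and, by compactness of the space of admissible arc configurations, bounded away from~$0$. Everything else is routine.
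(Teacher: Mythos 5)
Your proof is correct and follows the same basic strategy as the paper (equivariance from \prettyref{eq:v-props}, the Fourier lemma \prettyref{lem:fourier} plus compactness to bound $\norm{N_S}$ from below, a Riemann-sum comparison to transfer that bound to the discrete sums). The one genuine improvement is your fact~\emph{(a)}: you observe that $\sum_{i\in\Z_m}(-1)^iv_i=0$ (each coordinate is a full geometric sum of a non-trivial $m$-th root of unity, using $(-1)^jv_j=((-\xi^{2l+1})^j)_l$ resp.\ $((-\xi^l)^j)_l$ with $(-\xi^{2l+1})^m=(-\xi^l)^m=1$), so that for disjoint $S,T$ one has $N_S+N_T=-\sum_{i\in R}(-1)^iv_i$ with $\card R=k$, giving $\norm{N_S+N_T}\le k\norm{v_0}$ directly. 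Combined with $\bigl|\,\norm{N_S}-\norm{N_T}\,\bigr|\le\norm{N_S+N_T}$ and the identity $\frac{a}{p}+\frac{b}{q}=\frac{(p+q)(a+b)+(q-p)(a-b)}{2pq}$, this yields $\norm{v(S)+v(T)}\le 2C/L$ in one stroke. The paper instead bounds $\norm{v(S)+v(S)\cdot\sigma}$ (requiring $\norm{x\sigma+x}<\eps/2$, i.e.\ large $m$) and $\norm{N_{S\sigma}-N_T}$ using the combinatorial fact $\card{S\sigma\intersect T}\ge n-k$, which is workable but more roundabout; your version avoids both the $\sigma$-shift and the extra smallness condition. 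For the crux, fact~\emph{(b)}, your outline matches the paper's argument for even $k$, but for odd $k$ the paper reduces to the even case by a doubling trick $S\mapsto\Union_{j\in S}\set{j,j+m}\subset\Z_{2m}$ rather than invoking a modified Fourier lemma as you sketch; your route should also work, but you would need to actually state and prove the odd-frequency analogue of \prettyref{lem:fourier} and handle the wrap-around sign carefully (since $\tilde w(t+\pi)=-\tilde w(t)$), whereas the paper's reduction avoids that. You correctly flag the parity bookkeeping when padding the merged arc configuration to exactly $k$ intervals; note that after merging, the number $q$ of alternating arcs on the circle is automatically even (or $q=1$, the full circle), so padding by inserting pairs of zero-length arcs is always possible when $k$ is even.
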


\begin{proof}
Equivariance follows from \prettyref{eq:v-props}.  For the homomorphism
property, we have to show that there is an~$N$ such that
$\norm{v(S)+v(T)}<\eps$ for all $(S,T)\in E(SG_{n,k})$ with $n\ge N$.

We first show that
\begin{equation}\label{eq:andbeyond}
\lim_{n\to\infty}\min_{S\in V(SG_{n,k})}\bignorm{\sum_{r\in S}(-1)^rv_r}=\infty.
\end{equation}
For this it suffices to consider the case of an even~$k$, since for
$S\in V(SG_{n,2r+1})$ we can define $S'\in V(SG_{2n,s(2r+1)})$ by
$S'=\Union_{j\in S}\set{j,j+2(n+r)+1}$ and have $\bignorm{\sum_{j\in
    S'}(-1)^jv_j}=2\bignorm{\sum_{j\in S}(-1)^jv_j}$ (with different
$v_j$ on both sides of the equation) .  We therefore assume a fixed even~$k$.  
By \prettyref{lem:fourier} and compactness, there is a
$C>0$ such that
\begin{equation}\label{eq:gtC}
\bignorm{\sum_{s=0}^{k-1}(-1)^s\int_{a_s}^{a_{s+1}}w(t)\,dt}>C
\end{equation}
for all systems $(a_s)$ as considered there.
There is also a sequence $(\eps_m)$ converging to zero such that
\begin{equation}\label{eq:epsm}
\bignorm{\int_b^{b+4\pi/m}w(t)\,dt-\frac{4\pi}m w(b)}\le \frac{4\pi}m \eps_m
\end{equation}
for all $b$ and $m$.  We now consider an $S\in V(SG_{n,k})$ for
some~$n$ and set $m=2n+k$.  Since the norm in
\prettyref{eq:andbeyond} is invariant under the action of $D_{2m}$ on~$S$,
we may assume $0\in S$.
To~$S$ we associate a system $(a_s)$ as in 
\prettyref{lem:fourier} by requiring that
\[\set{a_s\colon 1\le s\le k}=\set{\frac{2\pi r}m\colon 
\text{$0<r\le m$ and $r-1,r-2\notin S$}}.\]
Now $[0,2\pi)$ is the disjoint union of the intervals
$[2\pi r/m,2\pi(r+2)/m)]$ for $r\in S$ and the intervals 
$[a_s-2\pi/m, a_s)$ for $1\le s\le k$. Also, for $r\in S$ with 
$a_s\le 2\pi r/m<a_{s+1}$ we have $(-1)^r=(-1)^s$.
It follows that
\begin{multline*}
\sum_{s=0}^{k-1}(-1)^s\int_{a_s}^{a_{s+1}}w(t)\,dt
=\\=
\sum_{r\in S}(-1)^r\int_{2\pi r/m}^{2\pi (r+2)/m}w(t)\,dt
+
\sum_{s=0}^{k-1}(-1)^s\int_{a_{s+1}-2\pi/m}^{a_{s+1}}w(t)\,dt
\end{multline*}
and hence (note that $w(2\pi r/m)=v_r$,
$\norm{w(t)}=\sqrt{k/2+1}$)
by \prettyref{eq:epsm}
\begin{multline*}
\bignorm{\frac{4\pi}m\sum_{r\in S}(-1)^rv_r
  -\sum_{s=0}^{k-1}(-1)^s\int_{a_s}^{a_{s+1}}w(t)\,dt}
\\\le
k\sqrt{k/2+1}\frac{2\pi}m+n\frac{4\pi}m\eps_m
\\=2\pi\left(\frac {k\sqrt{k/2+1}}{2n+k}+\frac{2n}{2n+k}\eps_{2n+k}\right)\xto[n\to\infty]{}0
,\end{multline*}
which implies
\begin{equation*}
\liminf_{n\to\infty} 
\min_{S\in V(SG_{n,k})}
\frac{4\pi}{2n+k}\bignorm{\sum_{r\in S}(-1)^rv_r}\ge C
\end{equation*}
and therefore \prettyref{eq:andbeyond}.

Now assume an arbitrary fixed~$k$ and let $N$ 
be such that for all $n\ge N$ we have 
\[\min_{S\in V(SG_{n,k})}\bignorm{\sum_{r\in S}(-1)^rv_r}
  >\frac{8k\sqrt{k/2+1}}\eps\]
and also for the action of $D_{2m}$ on $\Spherenk$ for $m=2n+k\ge 2N+k$
(compare \prettyref{eq:def-sigma-even}, \prettyref{eq:def-sigma-odd})
\[\norm{x\cdot\sigma+x}<\frac\eps2\qquad\text{for all $x\in\Spherenk$.}\]
For $(S,T)\in E(SG_{n,k})$ we have $\card{S\cdot\sigma\intersect T}\ge n-k$
and therefore 
\[\bignorm{\sum_{r\in S\cdot\sigma}(-1)^rv_r-\sum_{r\in T}(-1)^rv_r}
  \le 2k\sqrt{k/2+1}.\]
If $n\ge N$ then
\begin{align*}
\bignorm{v(S)+v(T)}
&\le\bignorm{v(S)+v(S)\cdot\sigma}+\bignorm{v(S\cdot\sigma)-v(T)}
\\&<\frac\eps2+ 
\frac{2\bignorm{\sum_{r\in S\cdot\sigma}(-1)^rv_r-\sum_{r\in T}(-1)^rv_r}}
   {\bignorm{\sum_{r\in S\cdot\sigma}(-1)^rv_r}}
\\&<\frac\eps2 +\frac{4k\sqrt{k/2+1}}{8k\sqrt{k/2+1}/\eps}
=\frac\eps2 + \frac\eps2=\eps
,\end{align*}
since $v(S\cdot\sigma)=v(S)\cdot\sigma$ and 
$\bignorm{\frac x{\norm x}-\frac y{\norm y}}\le \frac{2\norm{x-y}}{\norm x}$.
\end{proof}

\section{Construction of graph homorphisms}\label{sec:construction}
In this section we will show that most stable Kneser graphs $SG_{n,k}$
are not test graphs.  
\subsection{Vertex critical graphs}
We will use the following criterion.  Remember
that for a simplicial complex~$X$ we denote by $X^1$ its looped
$1$-skeleton, see \prettyref{sec:graph-complexes} and \cite{anton-cas}.

\begin{thm}\label{thm:crit-test}
Let $T$ be a finite 
graph equipped with a right action of a finite group~$\Gamma$.
Let $r\ge0$, $s\ge1$.  Then each of the following statements implies the next.
\begin{enumerate}
\item\label{it:ct-ii}
For every $\Gamma$-invariant triangulation $X$ of~$E_r\Gamma$
the inequality
\[
\chi(T\times_\Gamma X^1)\ge s
\]
holds.
\item\label{it:ct-iii}
There is no $\Gamma$-equivariant map
\[
\E_r\Gamma\to_\Gamma\real{\Hom(T, K_{s-1})}.
\]
\item\label{it:ct-iv}
For all graphs $G$ such that there is a $\Gamma$-equivariant map
$\E_r\Gamma\to\real{\Hom(T, G)}$ the inequality
\[\chi(G)\ge s\]
holds.
\item\label{it:ct-i}
For all graphs $G$ such that $\real{\Hom(T, G)}$ is $(r-1)$-connected
the inequality
\[\chi(G)\ge s
\]
holds.
\end{enumerate}
If $T$ is vertex critical and $\Gamma=\Aut(T)$, then \prettyref{it:ct-i}
implies \prettyref{it:ct-ii} and all of the statements are equivalent.
\end{thm}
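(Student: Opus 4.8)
The plan is to prove \prettyref{it:ct-ii} $\Rightarrow$ \prettyref{it:ct-iii} $\Rightarrow$ \prettyref{it:ct-iv} $\Rightarrow$ \prettyref{it:ct-i} with no hypotheses on $T$ or $\Gamma$, and then to close the loop with \prettyref{it:ct-i} $\Rightarrow$ \prettyref{it:ct-ii} under the stated hypotheses. The implication \prettyref{it:ct-iv} $\Rightarrow$ \prettyref{it:ct-i} is immediate from the fact, already used for \prettyref{prop:gamma-homotopy}, that an $(r-1)$-connected space admits an equivariant map from the $r$-dimensional free $\Gamma$-CW-complex $\E_r\Gamma$. For \prettyref{it:ct-iii} $\Rightarrow$ \prettyref{it:ct-iv} and \prettyref{it:ct-ii} $\Rightarrow$ \prettyref{it:ct-iii} I argue by contraposition. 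A colouring $G\to K_{s-1}$ induces a $\Gamma$-equivariant map $\real{\Hom(T,G)}\to_\Gamma\real{\Hom(T,K_{s-1})}$, and composing it with a hypothetical equivariant map $\E_r\Gamma\to_\Gamma\real{\Hom(T,G)}$ contradicts \prettyref{it:ct-iii}; and a hypothetical equivariant map $\E_r\Gamma\to_\Gamma\real{\Hom(T,K_{s-1})}$ can, after realising $\E_r\Gamma$ by a sufficiently fine $\Gamma$-invariant triangulation $X$ and using equivariant simplicial approximation, the natural equivalence $\real{\Hom(T,H)}\homot_\Gamma\real{\Hom(\ug,[T,H])}$, and the exponential adjunction, be turned into an equivariant graph homomorphism $X^1\to_\Gamma[T,K_{s-1}]$, equivalently a homomorphism $T\times_\Gamma X^1\to K_{s-1}$, contradicting \prettyref{it:ct-ii}. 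Passing to a barycentric subdivision keeps $X$ a $\Gamma$-invariant triangulation of $\E_r\Gamma$ and makes it a flag complex, which is what the approximation step requires.

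Now assume $T$ vertex critical and $\Gamma=\Aut(T)$ and prove \prettyref{it:ct-i} $\Rightarrow$ \prettyref{it:ct-ii}. Given a $\Gamma$-invariant triangulation $X$ of $\E_r\Gamma$, set $G\deq T\times_\Gamma X^1$; if $G$ has a loop then $\chi(G)=\infty$ and there is nothing to prove, so assume $G$ loopless. It suffices to show that $\real{\Hom(T,G)}$ is $(r-1)$-connected, for then \prettyref{it:ct-i} gives $\chi(T\times_\Gamma X^1)=\chi(G)\ge s$. There is a $\Gamma$-equivariant poset map from the face poset $FX$ to $\Hom(T,G)$ sending a simplex $\set{x_0,\dots,x_p}$ to the cell $v\mapsto\set{[v,x_0],\dots,[v,x_p]}$ of $\Hom(T,G)$: this is well defined because any two of the $x_i$ span an edge of the reflexive graph $X^1$, so the vertices $[v,x_i]$ of $G$ satisfy all adjacencies required by edges of $T$. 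It induces an equivariant map $\varphi\colon\real X\to_\Gamma\real{\Hom(T,G)}$, and the crux is to show that $\varphi$ is highly enough connected that the $(r-1)$-connectedness of $\real X$ passes to $\real{\Hom(T,G)}$.

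Both hypotheses are needed exactly here, and I expect this to be the main obstacle. Vertex criticality and finiteness force every endomorphism of $T$ to be an automorphism: the image of an endomorphism spans a subgraph of chromatic number $\chi(T)$, hence a subgraph on all of $V(T)$, hence a bijective and therefore invertible homomorphism. They also force $\Hom(T,T)$ to be the discrete poset $\Aut(T)$: two distinct automorphisms in a common cell of $\Hom(T,T)$ would yield a non-identity automorphism fixing every neighbourhood set, hence two distinct vertices with equal neighbourhoods, which a vertex critical graph cannot have. With $\Gamma=\Aut(T)$ this identifies a homomorphism $T\to G=T\times_\Gamma X^1$, after lifting along the free $\Gamma$-quotient $T\times X^1\to G$, with a pair consisting of an automorphism of $T$ and a homomorphism $T\to X^1$; feeding this into an equivariant fibre-lemma argument in the spirit of \prettyref{cor:equivariant-fibre}, together with the results of~\cite{anton-cas} on $\Hom$-complexes of graphs built over cell complexes, should yield the required connectivity of $\varphi$. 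Making this precise — essentially the statement that the discrete Borsuk-type graph $T\times_\Gamma X^1$ has $\real{\Hom(T,\cdot)}$ at least as highly connected as $\real X$ — is the only step that goes beyond adjunction-chasing and standard equivariant homotopy theory.
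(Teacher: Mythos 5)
The forward chain \prettyref{it:ct-ii}~$\Rightarrow$~\prettyref{it:ct-iii}~$\Rightarrow$~\prettyref{it:ct-iv}~$\Rightarrow$~\prettyref{it:ct-i} is fine and matches the paper's argument.

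The problem is \prettyref{it:ct-i}~$\Rightarrow$~\prettyref{it:ct-ii}: you have identified the correct ingredients (vertex criticality forces $\Hom(T,T)\isom\Aut(T)$, and the identification of $\Gamma$ with $\Aut(T)$ lets you cancel that factor), but you have not actually established that $\real{\Hom(T, T\times_\Gamma X^1)}$ is $(r-1)$-connected. You construct a map $\varphi\colon\real X\to\real{\Hom(T,G)}$ and then write that a fibre-lemma argument ``should yield the required connectivity'' and that ``making this precise \dots is the only step that goes beyond adjunction-chasing''. That step is the whole content of the implication, and it is not done.

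What the paper does here is quite specific and you have not reproduced it. First, it is not enough to work with $X$ itself: one replaces $X$ by a fine barycentric subdivision $Y$ and uses $\chi(T\times_\Gamma X^1)\ge\chi(T\times_\Gamma Y^1)$, because the key homotopy statements only hold for sufficiently fine complexes. Second, for fine enough $Y$ one invokes the product decomposition $\real{\Hom(T, T\times_\Gamma Y^1)}\homot\real{\Hom(T,T)}\times_\Gamma\real{\Hom(T,Y^1)}$ from \cite[Sec.~5.2]{anton-cas}, which, combined with $\Hom(T,T)\isom\Aut(T)=\Gamma$, collapses the left side to $\real{\Hom(T,Y^1)}$. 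Third, and essentially, one uses Dochtermann's universality theorem \cite[Thm~3.1]{DocUni}: for $Y$ fine enough, $\real{\Hom(T,Y^1)}\homot\real Y\homeo\E_r\Gamma$, which is $(r-1)$-connected. Your sketch never invokes this theorem, and the map $\varphi$ you propose does not by itself give the connectivity of the target — indeed you would need precisely a statement of the universality type to conclude, and that is not ``standard equivariant homotopy theory'' but a nontrivial result about $\Hom$ complexes of reflexive graphs of the form $Y^1$. A secondary remark: the argument you offer for $\Hom(T,T)$ being $0$-dimensional (two automorphisms in a common cell ``would yield \dots two distinct vertices with equal neighbourhoods'') is not quite the right deduction; the clean argument is that a positive-dimensional cell yields two automorphisms $f\ne g$ agreeing off one vertex, so $f^{-1}g$ is a non-identity permutation fixing all but one point, impossible.
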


\begin{cor}
If $T$ is a vertex critical graph with trivial automorphism group,
then $T$~is not a homotopy test graph.
\end{cor}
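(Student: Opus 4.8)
The plan is to apply \prettyref{thm:crit-test} to $T$ with $\Gamma=\Aut(T)$, which by hypothesis is the trivial group $\set1$, and to exhibit a single instance in which statement~\prettyref{it:ct-ii} of that theorem fails. Since $T$ is finite, vertex critical, and $\Gamma=\Aut(T)$, the theorem asserts that all four of its statements are equivalent; hence the failure of \prettyref{it:ct-ii} forces the failure of \prettyref{it:ct-i}, and the latter is precisely the assertion that $T$ is not a homotopy test graph.

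First I would take $r=1$ and $s=\chi(T)+1$. For the trivial group a legitimate choice of $\E_1\Gamma$ is $\Gamma\join\Gamma$, which is a single $1$-simplex $X=\Delta^1$: it is path-connected (hence $0$-connected), one-dimensional, and carries the unique --- and necessarily free --- action of $\Gamma$, and being a simplicial complex it is its own $\Gamma$-invariant triangulation. Its looped $1$-skeleton $X^1$ has as vertices the two atoms of the face poset of $\Delta^1$, and any two of these atoms, as well as each of them with itself, have the top cell of $\Delta^1$ as a common upper bound; so $X^1$ is the graph on two vertices with a loop at each vertex and an edge joining them.

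Next I would compute $\chi(T\times_\Gamma X^1)$. Since every ordered pair of its vertices is an edge of $X^1$, the edges of $T\times_\Gamma X^1=T\times X^1$ are exactly the pairs $\bigl((u,a),(v,b)\bigr)$ with $(u,v)\in E(T)$; in particular this graph is loopless (as $T$ is), the projection $(u,a)\mapsto u$ is a graph homomorphism onto $T$, and $u\mapsto(u,0)$ embeds $T$ into it as a subgraph. Hence $\chi(T\times_\Gamma X^1)=\chi(T)<\chi(T)+1=s$, so \prettyref{it:ct-ii} is false for this $r$, $s$, and triangulation.

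Finally, using the equivalence of the statements of \prettyref{thm:crit-test} in the vertex critical case with $\Gamma=\Aut(T)$, the failure of \prettyref{it:ct-ii} yields the failure of \prettyref{it:ct-i}: there is a graph $G$, necessarily loopless since $\chi(G)$ is finite, with $\real{\Hom(T,G)}$ path-connected --- in particular nonempty, so that $\chi(T)\le\chi(G)$ because $\Hom(T,G)\ne\emptyset$ --- and with $\chi(G)<\chi(T)+1$, whence $\chi(G)=\chi(T)$. Taking $r=1$ in \prettyref{def:homotopy-test-graph}, this $G$ shows that $T$ is not a homotopy test graph. The only point needing care is that the step from the failure of \prettyref{it:ct-ii} to that of \prettyref{it:ct-i} genuinely invokes the vertex-criticality clause of \prettyref{thm:crit-test} rather than only its unconditional chain of implications; together with the harmless verification that $\Delta^1$ is an admissible $\E_1\set1$, this is all that is required, so I do not anticipate a serious obstacle.
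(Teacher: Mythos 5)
Your proof is correct, but it takes a mildly different path through Theorem~\ref{thm:crit-test} than the paper does. The paper argues by contradiction: if $T$ were a homotopy test graph, then \prettyref{it:ct-i} would hold for $s=\chi(T)+r$, hence (by the equivalence for vertex-critical $T$ with $\Gamma=\Aut(T)$) \prettyref{it:ct-iii} would hold as well; but for trivial $\Gamma$ a $\Gamma$-equivariant map $\E_r\Gamma\to\real{\Hom(T,K_{s-1})}$ is just an unrestricted map, which always exists since $\Hom(T,K_{s-1})\ne\emptyset$ for $s>\chi(T)$, so \prettyref{it:ct-iii} is false. You instead falsify \prettyref{it:ct-ii}, one step earlier in the implication chain, by an explicit construction: you take $X=\Delta^1$ as a triangulation of $\E_1\set1$, compute $X^1$ to be the doubly-looped edge, and observe that $T\times_\Gamma X^1 = T\times X^1$ retracts onto $T$ (and contains it), giving $\chi(T\times X^1)=\chi(T)<s$. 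Both routes hinge on the same content --- that $T$ admits a $\chi(T)$-colouring, and that for trivial $\Gamma$ the equivariance constraint is vacuous --- but the paper's is the shorter observation about non-emptiness of a $\Hom$ poset, whereas yours is the more concrete chromatic-number computation. Either is a valid invocation of the equivalence in \prettyref{thm:crit-test}, and your remark that the vertex-criticality clause is the genuinely load-bearing part is exactly right: without it the implication \prettyref{it:ct-i} $\Rightarrow$ \prettyref{it:ct-ii} (or $\Rightarrow$ \prettyref{it:ct-iii}) is unavailable, and the argument collapses.
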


\begin{proof}
If $T$ is a homotopy test graph, then \prettyref{it:ct-i} holds
for $s=\chi(T)+r$.  However, if $\Gamma$ is trivial, then
\prettyref{it:ct-iii} holds for no $s>\chi(T)$, independent of $r$.
\end{proof}

\begin{cor}\label{cor:crit-test}
Let $T$ be a finite, vertex critical graph.  Then $T$ is an
$\Aut(T)$-test graph (\prettyref{def:gamma-test-graph})
if and only if $T$~is a homotopy test graph.
\end{cor}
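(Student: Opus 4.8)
The plan is to read the corollary directly off \prettyref{thm:crit-test}, whose chain of implications already bundles together the combinatorial, the $K_{s-1}$-specific, the equivariant, and the homotopy forms of the test-graph property. First I would put $\Gamma=\Aut(T)$ and, for a fixed $r\ge0$, $s=\chi(T)+r$, and observe that the two notions in the corollary are exactly two links of that chain, quantified over~$r$: unwinding \prettyref{def:gamma-test-graph}, ``$T$ is an $\Aut(T)$-test graph'' asserts that statement~\prettyref{it:ct-iv} of \prettyref{thm:crit-test} holds for $\Gamma=\Aut(T)$ for every $r\ge0$ with $s=\chi(T)+r$, while ``$T$ is a homotopy test graph'' asserts that statement~\prettyref{it:ct-i} holds for every such pair $r,s$.

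For the forward direction — an $\Aut(T)$-test graph is a homotopy test graph — I would invoke \prettyref{prop:gamma-homotopy} (equivalently, the always-valid implication \prettyref{it:ct-iv} $\Rightarrow$ \prettyref{it:ct-i} in \prettyref{thm:crit-test}, which uses only that an $(r-1)$-connected space admits an equivariant map from the free complex $\E_r\Gamma$). No hypothesis on~$T$ is needed here.

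For the converse I would assume $T$ is a homotopy test graph, fix a loopless graph~$G$ and an $r\ge0$ for which there is a $\Gamma$-equivariant map $\E_r\Gamma\to_\Gamma\real{\Hom(T,G)}$, and aim to prove $\chi(G)\ge\chi(T)+r$. Setting $s=\chi(T)+r$, the homotopy-test-graph hypothesis is precisely statement~\prettyref{it:ct-i} of \prettyref{thm:crit-test} for this $r$ and $s$; now I would apply the final clause of that theorem — available because $T$ is finite, vertex critical and $\Gamma=\Aut(T)$ — which makes statements \prettyref{it:ct-ii} through \prettyref{it:ct-i} equivalent, so statement~\prettyref{it:ct-iv} holds as well, yielding $\chi(G)\ge s=\chi(T)+r$. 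Since $r$ and~$G$ were arbitrary, this shows $T$ is an $\Aut(T)$-test graph.

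Since the whole argument is a translation exercise, the only delicate points — and the nearest thing to an obstacle — are two edge cases in matching the definitions to \prettyref{thm:crit-test}. One is that \prettyref{def:gamma-test-graph} and \prettyref{def:homotopy-test-graph} speak of loopless~$G$ whereas \prettyref{thm:crit-test} quantifies over all graphs; this is harmless, since a graph with a loop has no proper colouring and $\chi(G)\ge s$ then holds vacuously. The other is $r=0$, where ``$(-1)$-connected'' means nonempty and an equivariant map $\E_0\Gamma\to_\Gamma\real{\Hom(T,G)}$ exists iff $\Hom(T,G)\ne\emptyset$, so both notions collapse to the single implication $\Hom(T,G)\ne\emptyset\Rightarrow\chi(G)\ge\chi(T)$. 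Once these are noted, the corollary follows.
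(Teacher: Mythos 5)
Your proof is correct and matches the paper's: the paper likewise sets $\Gamma=\Aut(T)$, $s=\chi(T)+r$, and invokes the equivalence of statements~\prettyref{it:ct-iv} and~\prettyref{it:ct-i} in \prettyref{thm:crit-test}. Your additional remarks on looped~$G$ and $r=0$ are sound bookkeeping but introduce nothing beyond what the paper's one-line proof already relies on.
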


\begin{proof}
Set $\Gamma=\Aut(T)$, $s=\chi(T)+r$, and consider the equivalence of
\prettyref{it:ct-iv} and \prettyref{it:ct-i}.
\end{proof}

\begin{proof}[Proof of \prettyref{thm:crit-test}]
``$\prettyref{it:ct-ii}\Longrightarrow\prettyref{it:ct-iii}$''
If there is a $\Gamma$-equivariant map
$\E_r\Gamma\to_\Gamma\real{\Hom(T, K_{s-1})}$, then for some
$\Gamma$-invariant triangulation $X$ of $\E_r\Gamma$ there is an equivariant
graph map
$X^1\to_\Gamma \Hom(T, K_{s-1})^1\to_\Gamma[T,K_{s-1}]$
and therefore a graph homomorphism
$T\times_\Gamma X^1\to K_{s-1}$.
 
``$\prettyref{it:ct-iii}\Longrightarrow\prettyref{it:ct-iv}$'' Assume
there is a $\Gamma$-equivariant map $\E_r\Gamma\to\real{\Hom(T, G)}$.
Every colouring  $G\to K_{s-1}$ induces
an equivariant map $\Hom(T, G)\to_\Gamma \Hom(T, K_{s-1})$, 
and composing these maps contradicts the assumption
\prettyref{it:ct-iii}.  
Therefore $\chi(G)\ge s$.

``$\prettyref{it:ct-iv}\Longrightarrow\prettyref{it:ct-i}$'' 
If $\real{\Hom(T, G)}$ is $(r-1)$-connected, then there is an equivariant map
$\E_r\Gamma\to\real{\Hom(T, G)}$.

``$\prettyref{it:ct-i}\Longrightarrow\prettyref{it:ct-ii}$'' We assume
  that a triangulation $X$ of $\E_r\Gamma$ is given.  If we obtain $Y$
  from~$X$ by repeated barycentric subdivision, then $Y$ is also
  $\Gamma$-invariant, and there is an equivariant graph homomorphism
  $Y^1\to_\Gamma X^1$.  If the subdivion $Y$ is fine enough,
  then \[ \real{\Hom(T, T\times_\Gamma Y^1)} \homot\real{\Hom(T,
  T)}\times_\Gamma\real{\Hom(T, Y^1)}\] by \cite[Sec.~5.2]{anton-cas}.
  Since we assumed $T$ to be vertex critical, the only endomorphisms
  of~$T$ are the automorphisms.  It also follows that $\Hom(T, T)$ is
  $0$-dimensional, $\Hom(T, T)\isom\Aut(T)$.  We also assumed
  $\Gamma=\Aut(T)$.  
  Therefore \[ \real{\Hom(T,
  T)}\times_\Gamma\real{\Hom(T, Y^1)}
  \homeo \Aut(T)\times_{\Aut(T)}\real{\Hom(T, Y^1)}
  \homeo\real{\Hom(T, Y^1)}.\] 
  But
  by \cite[Thm~3.1]{DocUni}, again if $Y$ is a fine enough
  subdivision, \[\real{\Hom(T, Y^1)}\homot\real Y\homeo E_r\Gamma.\]
  Therefore $\real{\Hom(T, T\times_\Gamma Y^1)}$ is $(r-1)$-connected.
  Hence $\chi(T\times_\Gamma X^1)\ge\chi(T\times_\Gamma Y^1)\ge s$.
\end{proof}

\subsection{General constructions}

\begin{prop}\label{prop:borsuk-to}
Let $k\ge0$, $r>0$.  Then there is an $\eps>0$ such that the following holds.

Let $\Gamma$ be a finite group which acts from the right on $\R^{k+1}$
by orthogonal maps.  Let $X$ be a finite simplicial complex with a
free $\Gamma$-action and let $\xi$ be the vector bundle
$\R^{k+1}\times_\Gamma\real{X}\to\real{X}/\Gamma$.  If 
there exists an $(r-1)$-dimensional vector bundle~$\eta$ over
$\real X/\Gamma$ such that $\xi\dplus\eta$ is trivial, then there is
a $\Gamma$-invariant subdivision $Y$ of~$X$ such that
\[\chi(B_\eps(\Spherenk)\times_\Gamma Y^1) < k+2+r.\]
\end{prop}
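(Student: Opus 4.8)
The plan is to reduce a bound on the chromatic number of $B_\eps(\Sphere(W_{n,k}))\times_\Gamma Y^1$ to the vanishing of a topological obstruction, namely the existence of enough independent sections of $\xi$ after passing to a suitable subdivision. The key observation is that a proper colouring of $B_\eps(\Sphere(W_{n,k}))\times_\Gamma Y^1$ with $c$ colours is the same as a graph homomorphism to $K_c$, which by the exponential adjunction corresponds to an equivariant graph homomorphism $Y^1\to_\Gamma [B_\eps(\Sphere(W_{n,k})),K_c]$, and since the vertices of $[B_\eps(\Sphere(W_{n,k})),K_c]$ with loops are the graph homomorphisms $B_\eps(\Sphere(W_{n,k}))\to K_c$, it suffices to build a $\Gamma$-equivariant map $\real Y\to\real{\Hom(B_\eps(\Sphere(W_{n,k})),K_c)}$ together with the combinatorial bookkeeping that turns such a map on a fine enough subdivision into the required graph homomorphism out of $Y^1$. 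So the first step is to recall, from \cite{anton-cas} and the discussion around $P^1$ in Section~\ref{sec:graph-complexes}, the precise statement that a $\Gamma$-map $\real Y\to_\Gamma\real{\Hom(B_\eps(\Sphere(W_{n,k})),K_c)}$ yields, after $\Gamma$-invariant subdivision, an equivariant graph homomorphism $Y^1\to_\Gamma\Hom(B_\eps(\Sphere(W_{n,k})),K_c)^1$ and hence the colouring we want with $c=k+1+r$.

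The second step is to identify $\real{\Hom(B_\eps(\Sphere(W_{n,k})),K_{k+1+r})}$, or rather a $C_2$-subspace of it that suffices, with something homotopy-theoretically tractable: for small enough $\eps$ a graph homomorphism $B_\eps(\Sphere^k)\to K_c$ is essentially a $C_2$-map $\Sphere^k\to_{C_2}\Hom(K_2,K_c)\homot_{C_2}\Sphere^{c-2}$, so producing such a homomorphism amounts to producing a $C_2$-map $\Sphere^k\to_{C_2}\Sphere^{k+r-1}$; this is the place where the $\eps$ of the statement is chosen (depending only on $k$ and $r$) so that an $\eps$-fine map can be straightened to an honest graph homomorphism, using a nerve/barycentric-subdivision argument like the one in Remark~\ref{rem:B-to-SG} and the preceding remark. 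Combined with the first step, the problem becomes: construct a $\Gamma$-equivariant map $\real X\to\real{\mathrm{Maps}_{C_2}(\Sphere(W_{n,k}),\Sphere^{k+r-1})}$, equivalently a $(C_2\times\Gamma)$-equivariant map $\Sphere(W_{n,k})\times\real X\to_{C_2}\Sphere^{k+r-1}$, where $C_2$ acts antipodally on $\Sphere(W_{n,k})$ and on $\Sphere^{k+r-1}$ and trivially on $\real X$.

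The third and central step is to build that equivariant map from the section data. By hypothesis $\xi\dplus\eta\isom\eps^{N}$ for some trivial bundle of rank $N=k+1+(r-1)=k+r$ over $\real X/\Gamma$; pulling back to $\real X$, this says the $\Gamma$-bundle $W_{n,k}\times\real X$ embeds $\Gamma$-equivariantly as a subbundle of the trivial bundle $\R^{k+r}\times\real X$ with a $\Gamma$-equivariant complement of rank $r-1$. Fibrewise over a point of $\real X$ this gives a linear isometric embedding $\R^{k+1}\hookrightarrow\R^{k+r}$, hence on unit spheres a map $\Sphere(W_{n,k})\to_{C_2}\Sphere^{k+r-1}$ (restricting the antipodal map), and varying the point of $\real X$ this is exactly a $(C_2\times\Gamma)$-equivariant map $\Sphere(W_{n,k})\times\real X\to_{C_2}\Sphere^{k+r-1}$ as required — here one uses that the trivialization is over the quotient, so $\Gamma$ acts only through its action on $\Sphere(W_{n,k})$ and $\real X$, and the target carries the trivial $\Gamma$-action, matching $\Sphere^{k+r-1}=\Sphere(\Hom(K_2,K_{k+1+r}))$. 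Running this back through steps one and two and choosing $Y$ a sufficiently fine $\Gamma$-invariant subdivision of $X$ produces the proper $(k+1+r)$-colouring, i.e.\ $\chi(B_\eps(\Sphere(W_{n,k}))\times_\Gamma Y^1)<k+2+r$.

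I expect the main obstacle to be the bookkeeping in step one/two rather than the topology in step three: one must choose $\eps$ before $Y$, verify that an $\eps$-fine equivariant continuous map really does descend to an equivariant graph homomorphism out of $Y^1$ after subdivision (this is where a Lebesgue-number argument and the results of \cite{anton-cas} on $P^1$ and on $\real{\Hom(T,Y^1)}\homot\real Y$ are invoked, exactly as in the proof of Theorem~\ref{thm:crit-test}), and keep track of the compatibility of the various $\Gamma$- and $C_2$-actions throughout. A secondary subtlety is that one should work with a $\Gamma$-equivariant, fibrewise-isometric embedding of bundles, which requires an averaging/Gram--Schmidt argument over $\Gamma$; since $\Gamma$ is finite and acts orthogonally this is routine but must be stated.
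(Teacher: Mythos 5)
Your step three — using the trivialization $\xi\oplus\eta\isom\eps^{k+r}$ to produce, after Gram--Schmidt, a $\Gamma$-equivariant family of linear isometric embeddings $\R^{k+1}\hookrightarrow\R^{k+r}$ parametrized by $\real X$, i.e.\ an equivariant map $\real X\to_\Gamma\Iso(\R^{k+1},\R^{k+r})$ — is exactly the geometric heart of the paper's argument. (Minor point: no averaging over $\Gamma$ is needed; the trivialization lives over $\real X/\Gamma$, so equivariance of the induced map is automatic.)

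The difference, and the gap, is in how you turn this into an actual colouring. You propose to route through the exponential graph $[B_\eps(\Spherenk),K_c]$ and the complex $\Hom(B_\eps(\Spherenk),K_c)$, invoking the $P^1$ / $\real{\Hom(T,Y^1)}\homot\real Y$ machinery from \cite{anton-cas}. But $B_\eps(\Spherenk)$ is an infinite graph (its vertex set is the whole sphere), and those results are stated and proved for finite graphs; you cannot simply plug $B_\eps(\Spherenk)$ into them. More importantly, you never say how $\eps$ is actually pinned down. The phrase ``so that an $\eps$-fine map can be straightened'' hides the one piece of combinatorics that makes the whole thing work: a fixed closed covering $(A_0,\dots,A_{k+r})$ of $\Sphere^{k+r-1}$ with no $A_i$ containing an antipodal pair, with $D=\min_i\dist(A_i,-A_i)>0$, and $\eps<D$. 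This covering is what turns the family of isometries $f(y)$ into an explicit colouring $c(v,y)=\min\set{i\colon f(y)(v)\in A_i}$ and gives the elementary estimate
\[
\dist(A_{c(v,y)},-A_{c(v',y')})\le\norm{f(y)-f(y')}+\norm{v+v'}<\eps'+\eps=D
\]
for adjacent $(v,y)$, $(v',y')$, once $Y$ is subdivided so that $\norm{f(y)-f(y')}<\eps'=D-\eps$ on edges. The paper thereby builds the graph homomorphism $B_\eps(\Spherenk)\times Y^1\to K_{k+r+1}$ directly, with no detour through $\Hom$-complexes of an infinite graph and no appeal to the nerve lemma. You correctly sense that steps one and two are where the work lies, but without introducing the covering $(A_i)$ — which is what fixes $\eps$ as a function of $k$ and $r$ alone and supplies the target of the colouring — the argument does not close. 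I'd suggest dropping the detour through $\Hom(B_\eps(\Spherenk),K_c)$ and writing down the colouring directly, as the paper does.
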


\begin{proof}
We choose a covering $(A_i)_{i=0,\dots,k+r}$ of $\Sphere^{k+r-1}$ by
closed subsets such that no~$A_i$ contains a pair of antipodal
points.  Let $D\deq\min_i\dist(A_i,-A_i)>0$, $0<\eps<D$ and $\eps'\deq
D-\eps$.

Since $\Gamma$ acts by orthogonal maps, the bundle $\xi$ is a
Euclidean vector bundle.  The bundle $\eta$ can be made into a
Euclidean vector bundle, and the $r+l$ linear independent sections of
$\xi\dplus\eta$ which define the trivialization can be made orthogonal
using Gram-Schmidt.  Therefore there is a continuous map
$E(\xi)=\R^{k+1}\times_\Gamma\real{X}\to\R^{k+r}$ such that the
restriction to each fibre of~$\xi$ is a linear isometry.  Denoting the
space of linear isomotries from $\R^{k+1}$ to $\R^{k+r}$ by
$\Iso(\R^{k+1},\R^{k+r})$ and viewing it as a $\Gamma$-space via the
action on~$\R^{k+1}$, this is equivalent to the existence of an
equivariant continuous map
\[f\colon \real{X}\to_\Gamma\Iso(\R^{k+1},\R^{k+r}).\]
We let $Y$ be a subdivision of~$X$ such that for all pairs $y,y'$ of
neighbouring vertices of~$Y$ we have $\norm{f(y)-f(y')}<\eps'$, where
$\norm\bullet$ denotes the operator norm. %

We define
\begin{align*}
c\colon \Spherenk\times V(Y)&\to\set{0,\dots,k+r},\\
(v,y)&\mapsto \min\set{i\colon f(y)(v)}.
\end{align*}
Now if $v,v'\in\Spherenk$, $\norm{v+v'}<\eps$, and $y,y'\in V(Y)$,
$\set{y,y'}\in Y$, then 
\begin{multline*}
\dist(A_{c(v,y)},-A_{c(v',y')})\le
\norm{f(y)(v)-(-f(y')(v'))}
\\\le\norm{f(y)-f(y')}+\norm{v-(-v')}<\eps'+\eps=D
\end{multline*}
and hence $c(v,y)\ne
c(v',y')$.  This shows that the function $c$ is a graph homomorphism
$B_\eps(\Spherenk)\times Y^1\to K_{k+r+1}$.  Since for
$\gamma\in\Gamma$ we have $f(\gamma y)(v)=(\gamma f)(v)=f(v\gamma)$,
we have $c(v,\gamma y)=c(v\gamma, y)$, and $[(v,y)]\mapsto c(v,y)$ 
defines a
$(k+1+r)$-colouring of $B_\eps(\Spherenk)\times_\Gamma Y^1$.
\end{proof}

\begin{prop}\label{prop:not-test-graph}
Let $k,r\ge1$ and $r=1$ or $r\equiv 0\pmod 2$.
Then there is an $N\ge 2$ such that for all $n\ge N$ with 
$\wbar_r(\xi_{n,k})=0$ there is a graph~$G$ such that $\Hom(SG_{n,k}, G)$
is $(r-1)$-connected and $\chi(G)<k+2+r$.
\end{prop}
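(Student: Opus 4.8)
The plan is to chain together the results already established, applying them to a judiciously chosen test graph $G$. First I would invoke \prettyref{prop:borsuk-to}: it hands us an $\eps>0$ (depending only on $k$ and $r$) with the property that whenever $\R^{k+1}$ carries an orthogonal $\Gamma$-action, $X$ is a finite free $\Gamma$-complex, $\xi = \R^{k+1}\times_\Gamma\real X\to\real X/\Gamma$ is the associated bundle, and $\xi$ admits an $(r-1)$-dimensional complement $\eta$ making $\xi\dplus\eta$ trivial, then for a suitable subdivision $Y$ of $X$ the graph $B_\eps(\Spherenk)\times_\Gamma Y^1$ is $(k+1+r)$-colourable. I would then apply \prettyref{prop:eps} with this $\eps$: for all large enough $n$ there is a $D_{2m}$-equivariant graph homomorphism $SG_{n,k}\to_{D_{2m}}B_\eps(\Spherenk)$, $m=2n+k$. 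Fix $N$ large enough that this holds. So from now on $n\ge N$ and we assume $\wbar_r(\xi_{n,k})=0$.

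The next step is to produce the graph $G$ and the equivariant map into $\real{\Hom(SG_{n,k},G)}$. Take $\Gamma=D_{2m}$ acting on $\R^{k+1}=W_{n,k}$ and let $X$ be a $D_{2m}$-invariant triangulation of $\E_r D_{2m}$. Then $\xi$ in the sense of \prettyref{prop:borsuk-to} is precisely (a model of) $\xi_{n,k}$ restricted to the $r$-skeleton, equivalently the pullback of $\xi_{n,k}$ along the classifying map $\real X/D_{2m}=\B_r D_{2m}\to \B D_{2m}$. Since Stiefel--Whitney classes are natural, this pullback has $\wbar_r=0$ as well. Because $\real X$ is $r$-dimensional and $r=1$ or $r\equiv 0\pmod 2$, \prettyref{prop:wbar-obs} gives an $(r-1)$-dimensional bundle $\eta$ over $\real X/D_{2m}$ with $\xi\dplus\eta$ trivial. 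So the hypotheses of \prettyref{prop:borsuk-to} are met: there is a $D_{2m}$-invariant subdivision $Y$ of $X$ with $\chi(B_\eps(\Spherenk)\times_{D_{2m}}Y^1)<k+2+r$. Set $G\deq B_\eps(\Spherenk)\times_{D_{2m}}Y^1$; then $\chi(G)<k+2+r=\chi(SG_{n,k})+r$, which is the colouring bound we want.

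It remains to check that $\real{\Hom(SG_{n,k},G)}$ is $(r-1)$-connected, and for this it suffices to exhibit an equivariant map $\E_r D_{2m}\to_{D_{2m}}\real{\Hom(SG_{n,k},G)}$ from some model of $\E_r D_{2m}$, since then $\real{\Hom(SG_{n,k},G)}$ receives a map from an $(r-1)$-connected space and --- wait, that is the wrong direction; instead I would argue as follows. Composing the equivariant graph homomorphism $SG_{n,k}\to_{D_{2m}}B_\eps(\Spherenk)$ with the identity on $Y^1$ yields, via the adjunction and the product structure, an equivariant graph homomorphism $Y^1\to_{D_{2m}}[SG_{n,k},G]$, hence an equivariant poset map $Y^1\to_{D_{2m}}\Hom(SG_{n,k},G)^1$ and an equivariant continuous map $\real Y\to_{D_{2m}}\real{\Hom(SG_{n,k},G)}$; but $\real Y\homeo\E_r D_{2m}$ is $(r-1)$-connected and $r$-dimensional and free, so the classifying space argument of \prettyref{cor:test-graph}'s proof (or directly: pull back obstruction classes) shows this map forces $\real{\Hom(SG_{n,k},G)}$ to be $(r-1)$-connected --- more carefully, one uses that an $(r-1)$-connected space is characterized by extendability, so it is cleanest to invoke \prettyref{thm:crit-test}, statement \prettyref{it:ct-iv}, in contrapositive: the map we built together with the $(k+1+r)$-colouring of $G$ shows statement \prettyref{it:ct-iv} fails for $s=\chi(T)+r$, hence \prettyref{it:ct-i} fails, which is exactly the conclusion that some $G$ with $\real{\Hom(SG_{n,k},G)}$ being $(r-1)$-connected has $\chi(G)<k+2+r$.

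The main obstacle is the bookkeeping in this last paragraph: making precise that the equivariant graph homomorphism $SG_{n,k}\to G$ (which exists because $G$ was \emph{built} to receive the Borsuk graph, and the Borsuk graph receives $SG_{n,k}$) really does testify to $(r-1)$-connectivity of $\real{\Hom(SG_{n,k},G)}$, and reconciling this with the subdivision $Y$ appearing inside the definition of $G$. The subtlety is that $G$ depends on $Y$, and $Y$ depends on $\eps$ which depends on nothing problematic, but one must be sure there is no circularity; the clean way is to fix $\eps$ first from \prettyref{prop:borsuk-to}, then $N$ from \prettyref{prop:eps}, then $X$, then obtain $\eta$, then $Y$ and $G$ in that order. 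Once the order of quantifiers is pinned down, each individual step is a direct citation of an earlier result.
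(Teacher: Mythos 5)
Your proposal is correct and uses the same ingredients as the paper: \prettyref{prop:borsuk-to} to fix $\eps$, \prettyref{prop:eps} to fix $N$, \prettyref{prop:wbar-obs} to produce the complement $\eta$, and \prettyref{thm:crit-test} (via Schrijver's vertex criticality and Braun's $\Aut(SG_{n,k})=D_{2m}$) to convert a colouring bound into failure of the homotopy test property. The one difference is that you enter \prettyref{thm:crit-test} through the negation of statement~\prettyref{it:ct-iv}, which forces you to build the explicit equivariant map $\E_r D_{2m}\to_{D_{2m}}\real{\Hom(SG_{n,k},G)}$; the paper enters through the negation of statement~\prettyref{it:ct-ii}, where one only needs the colouring $\chi(SG_{n,k}\times_{D_{2m}}Y^1)<k+r+2$ (obtained by composing the graph homomorphism $SG_{n,k}\to_{D_{2m}}B_\eps(\Spherenk)$ with the colouring of $B_\eps(\Spherenk)\times_{D_{2m}}Y^1$), which avoids the map-building bookkeeping you flag as a concern and makes the quantifier order transparent.
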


\begin{proof}
Given $k$ and $r$ we choose $\eps>0$ as in \prettyref{prop:borsuk-to}
and $N$ as in \prettyref{prop:eps}.  Now given $n\ge N$, $m=2n+k$, 
there is an equivariant graph homomorphism
$SG_{n,k}\to_{D_{2m}} B_{\eps}(W_{n,k})$ by \prettyref{prop:eps}.

Now if $\wbar_r(\xi_{n,k})=0$ then by 
\prettyref{prop:wbar-obs} and \prettyref{prop:borsuk-to}
there is a $D_{2m}$-invariant triangulation $Y$ of $E_r D_{2m}$
such that $\chi(B_{\eps}(W_{n,k})\times_{D_{2m}} Y^1)<k+2+r$ and therefore
\[\chi(SG_{n,k}\times_{D_{2m}} Y^1)<k+r+2.\]
Since stable Kneser graphs are vertex critical with respect to the
chromatic number by a theorem of Schrijver~\cite{schrijver} and
$\Aut(SG_{n,k})={D_{2m}}$ for $n>1$ by a theorem of
Braun~\cite{braun-sg}, we can invoke \prettyref{thm:crit-test} to
conclude the proof.
\end{proof}

\subsection{Particular cases}
We now determine cases in which the condition
of \prettyref{prop:not-test-graph} is satisfied.
The results have been summarized in the introduction.

\begin{prop}\label{prop:even-4-wbar}
Let $k=2r$, $r\ge 3$, $n>0$, $n\equiv r\pmod 2$.
Then for
\[(\ell,\ell')=\begin{cases}
(3\cdot 2^a+1, 2^{a+2}),&\text{$r=2s$, $s\equiv 2^a\pmod{2^{a+1}}$, $a>0$},\\
(3\cdot 2^a-1, 2^{a+2}),&\text{$r=2s+1$, $s\equiv 2^a\pmod{2^{a+1}}$}, \\
(3\cdot 2^a-2, 2^{a+2}),&\text{$r=2s+2$, $s\equiv 2^a\pmod{2^{a+1}}$, $a>0$}
\end{cases}
\]
and $\ell\le d<\ell'$ we have 
\[\wbar_d (\xi_{n,k})=0.\]
The same is true for $r=2s+4$, $s\equiv 2^a\pmod{2^{a+1}}$, $a>1$ and
$(\ell,\ell')=(3\cdot 2^a-5,2^{a+2})$.
\end{prop}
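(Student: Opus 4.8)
The starting point is \prettyref{prop:even-4-w}, which gives
\[w(\xi_{n,k})=(1+y)(1+x+y+u)^{r_1}(1+x+y)^{r_2},\qquad r_1=\lceil r/2\rceil,\ r_2=\lfloor r/2\rfloor,\]
in the ring $H^\ast(D_{2m};\Z_2)=\Z_2[x,y,u]/(xy)$ — note that $m=2(n+r)\equiv0\pmod4$ because $n\equiv r\pmod2$, so this presentation applies. Inverting, $\wbar(\xi_{n,k})=(1+y)^{-1}(1+x+y+u)^{-r_1}(1+x+y)^{-r_2}$ in the completed graded ring, and the claim is that its degree-$d$ part vanishes for $d$ in the listed intervals. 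In each of the four sub-cases, $r_1$, $r_2$ and $r$ are explicit integers whose binary expansions are dictated by the hypothesis $s\equiv2^a\pmod{2^{a+1}}$ (which just says $a=v_2(s)$), so the whole statement is really one about these binary expansions.

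The first reduction I would make is to replace $\Z_2[x,y,u]/(xy)$ by honest polynomial rings. This ring is the fibre product of its quotients $\Z_2[x,u]$ (set $y=0$) and $\Z_2[y,u]$ (set $x=0$) over $\Z_2[u]$, so an element is zero iff both of its images are. Here $\wbar(\xi_{n,k})$ maps to $(1+x)^{-r_2}(1+x+u)^{-r_1}$ and to $(1+y)^{-(r_2+1)}(1+y+u)^{-r_1}$ respectively. Thus everything reduces to the following question in $\Z_2[t,u]$ with $\deg t=1$, $\deg u=2$: for which $d$ does the degree-$d$ part of $(1+t)^{-A}(1+t+u)^{-B}$ vanish, for $(A,B)=(r_2,r_1)$ and for $(A,B)=(r_2+1,r_1)$?

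For this I would use $1+t+u=(1+t)\bigl(1+u(1+t)^{-1}\bigr)$ to get
\[(1+t)^{-A}(1+t+u)^{-B}=\sum_{j\ge0}\binom{B+j-1}{j}\,u^j\,(1+t)^{-(A+B+j)},\]
so that the coefficient of $t^iu^j$ is $\binom{B+j-1}{j}\binom{A+B+i+j-1}{i}\bmod2$. Combining the two cases $(A,B)\in\set{(r_2,r_1),(r_2+1,r_1)}$ and using Pascal's rule on the second factor, one finds that $\wbar_d(\xi_{n,k})=0$ if and only if for every $i,j\ge0$ with $i+2j=d$ either $\binom{r_1+j-1}{j}$ is even or both $\binom{r+i+j-1}{i}$ and $\binom{r+i+j-1}{i-1}$ are even. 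By Kummer's theorem each of these parities becomes a disjointness condition on binary supports (for instance $\binom{r_1+j-1}{j}$ is odd iff $r_1-1$ and $j$ share no binary digit), and the proof is completed by checking this bitwise condition for all $d$ with $\ell\le d<\ell'=2^{a+2}$, separately in the four sub-cases; the restrictions $a>0$ (and $a>1$ for $r=2s+4$) are exactly what makes the relevant low binary digits available.

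The main obstacle is this concluding combinatorial bookkeeping. One must verify the bitwise condition uniformly over the whole window $[\ell,\ell')$, check that the lower endpoint $\ell$ is sharp (a single pair $i+2j=\ell-1$ violating the condition for a suitable $s$ suffices, as does one with $i+2j=\ell'$ to see the window cannot be extended upward), handle the $x$-part and the $y$-part simultaneously despite the asymmetric shift $r_2\mapsto r_2+1$, $r\mapsto r+1$, and carry out the extra sub-case $r=2s+4$. All of the topology is packaged in \prettyref{prop:even-4-w} and the ring structure recalled in \prettyref{sec:cohomology}; the remaining work is entirely the mod-$2$ arithmetic of binomial coefficients.
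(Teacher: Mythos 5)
Your reduction is mathematically sound up to the point where the real work starts, and it is a genuinely different route from the paper's. The fibre-product identification of $\Z_2[x,y,u]/(xy)$ with the pullback of $\Z_2[x,u]$ and $\Z_2[y,u]$ over $\Z_2[u]$ is correct, the expansion
\[(1+t)^{-A}(1+t+u)^{-B}=\sum_{j\ge0}\binom{B+j-1}{j}u^j(1+t)^{-(A+B+j)}\]
is correct, and so is the resulting criterion: $\wbar_d(\xi_{n,k})=0$ iff for every $i,j\ge0$ with $i+2j=d$ either $\binom{r_1+j-1}{j}$ is even or both $\binom{r+i+j-1}{i}$ and $\binom{r+i+j-1}{i-1}$ are, with $r_1=\lceil r/2\rceil$. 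But then you stop. The last paragraph names the entire content of the proposition — verifying the Kummer/Lucas digit condition uniformly over $[\ell,\ell')$ in all four sub-cases, including the awkward $r=2s+4$ case — as "concluding combinatorial bookkeeping" and does not carry it out. That check is not a formality; nothing in your argument so far shows that the binary expansions of $r_1-1$, $r-1$, $r$ dictated by $s\equiv2^a\pmod{2^{a+1}}$ actually force vanishing on those windows, and it is not obvious. As written this is a plan, not a proof. (You also spend effort on sharpness of $\ell$ and on whether the window can be extended upward, neither of which the proposition asserts.)

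The paper's proof sidesteps the digit analysis entirely by an algebraic device you did not spot: it observes that
\[\bigl((1+x+y+u)(1+x+y)\bigr)^{-1}=(1+x+y)(1+x+y+u)+\text{(terms of degree $\ge4$)},\]
so that squaring repeatedly (Frobenius) gives $\bigl((1+x+y+u)(1+x+y)\bigr)^{-2^{a'}}=1+\text{(degree $\ge2^{a+2}$)}$ for $a'>a$, hence
\[\bigl((1+x+y+u)(1+x+y)\bigr)^{-s}=(1+x+y)^{2^a}(1+x+y+u)^{2^a}+\text{(degree $\ge2^{a+2}$)}\]
whenever $s\equiv2^a\pmod{2^{a+1}}$. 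Multiplying by $(1+y)^{-1}$ and by the appropriate correction factor in each of the four sub-cases collapses $\wbar(\xi_{n,k})$, modulo degree $\ge\ell'$, to an explicit polynomial — for instance $\bigl((1+y)^{2^a-1}+x^{2^a}\bigr)(1+x+y+u)^{2^a}$ when $r=2s$ — whose top degree is read off directly and equals $\ell-1$. The hypotheses $a>0$ (resp.\ $a>1$) are exactly what makes the exponents $2^a-2$ (resp.\ $2^a-3$) nonnegative in the simplified forms. If you want to complete your own version, you must actually perform the bit-level verification; but note that the paper replaces it with a one-line self-reproducing identity and a degree count, which is considerably less error-prone.
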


\begin{cor}\label{cor:even-4-wbar}
Let $k=2r$, $r\ge 3$, $k\ne 8$, $n>0$, $n\equiv r\pmod 2$.  Then there is an
even $d\ge0$ such that $\wbar_d(\xi_{n,k})=0$.
\qed
\end{cor}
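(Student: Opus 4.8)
The plan is to read off an even degree~$d$ directly from \prettyref{prop:even-4-wbar}, whose hypotheses are exactly those of the corollary. Since $k=2r\ne 8$ we have $r\ne 4$, and for a given admissible~$r$ that proposition offers several ways of writing $r$ in one of the shapes $2s$, $2s+1$, $2s+2$ or $2s+4$, each producing an interval $[\ell,\ell')$ of degrees~$d$ with $\wbar_d(\xi_{n,k})=0$. It therefore suffices to show that for every $r\ge3$ with $r\ne4$ at least one of these intervals contains an even integer. In each case $\ell'=2^{a+2}$, where $a$ is the exponent in the accompanying condition $s\equiv 2^a\pmod{2^{a+1}}$ (so $2^a$ is the exact power of~$2$ dividing~$s$), and $3\cdot2^a$ is even as soon as $a\ge1$. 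The only subtlety is that the most naive representation of~$r$ can yield an interval of length one consisting of a single odd number, so the representation must be chosen according to the residue of~$r$ modulo a small power of~$2$.

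I would split into four cases. \emph{$r$ odd.} Write $r=2s+1$ with $s=(r-1)/2\ge1$; then $\wbar_d(\xi_{n,k})=0$ for $3\cdot2^a-1\le d<2^{a+2}$. If $a=0$ this interval is $\{2,3\}$ and $d=2$ works; if $a\ge1$ then $d=3\cdot2^a$ is even and lies in it. \emph{$r\equiv2\pmod4$.} Then $r\ge6$; write $r=2s+2$ with $s=r/2-1$, which is even, so $a\ge1$, and the even number $d=3\cdot2^a-2$ lies in $[3\cdot2^a-2,2^{a+2})$. \emph{$r\equiv4\pmod8$ and $r\ne4$.} Then $r\ge12$; write $r=2s+4$ with $s=r/2-2$. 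Since $r/2\equiv2\pmod4$ we have $4\mid s$, hence $a\ge2$, so the final clause of \prettyref{prop:even-4-wbar} applies and $d=3\cdot2^a-4$ is a positive even integer in $[3\cdot2^a-5,2^{a+2})$. \emph{$8\mid r$.} Write $r=2s$ with $s=r/2$; since $8\mid r$ we have $4\mid s$ and thus $a\ge2$, so $[3\cdot2^a+1,2^{a+2})$ contains the even number $d=3\cdot2^a+2$ (using $2^a>2$).

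The place where care is needed — and the reason $k\ne8$ is precisely the right hypothesis — is the class $r\equiv4\pmod8$: for $r=4$ the representation $2s$ gives $s=2$, $a=1$ and the useless interval $\{7\}$, the representation $2s+2$ gives $s=1$ with $a=0$, which is not permitted, and $2s+4$ would require $s=0$; so no interval is available and $r=4$ genuinely has to be excluded, whereas for $r\ge12$ in this class the $2s+4$-clause of \prettyref{prop:even-4-wbar} rescues the argument. In every other residue class a single representation already yields an interval that is long enough or has even left endpoint, and the corollary follows at once. The main obstacle is thus not a computation but the bookkeeping needed to see that the available intervals really do cover all $r\ge3$ with $r\ne4$.
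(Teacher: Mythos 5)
Your proof is correct and is essentially the argument the paper intends (the paper marks the corollary with \qed, treating it as a direct consequence of \prettyref{prop:even-4-wbar}, and your case analysis is exactly the bookkeeping needed to justify that). In particular your verification that for every residue class of $r$ modulo $8$ one of the four representations yields an interval $[\ell,\ell')$ containing an even integer, together with your observation that for $r=4$ every available representation fails (interval $\{7\}$ from $r=2s$, the $a>0$ hypothesis blocking $r=2s+2$, and $s=0$ blocking $r=2s+4$), correctly identifies $k\ne8$ as the sharp hypothesis.
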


\begin{proof}[Proof of \prettyref{prop:even-4-wbar}]
We use \prettyref{prop:even-4-w}
and start by calculating sufficiently many terms of 
$\left((1+x+y+u)^s(1+x+y)^s\right)^{-1}$.
We have 
\begin{multline*}
\left((1+x+y+u)(1+x+y)\right)^{-1}
\\=
1+x^2+y^2+u+xu+yu\ +\text{ terms of degree $\ge4$}
\\=
(1+x+y)(1+x+y+u)\ +\text{ terms of degree $\ge4$}.
\end{multline*}
Since this implies that 
\begin{multline*}
\left((1+x+y+u)(1+x+y)\right)^{-2^{a'}}
=
1
\ +\text{ terms of degree $\ge2^{a+2}$}
\\\quad\text{for $a'>a$,}
\end{multline*}
we have
\begin{multline*}
\left((1+x+y+u)(1+x+y)\right)^{-s}\\=
(1+x+y)^{2^n}(1+x+y+u)^{2^n}
\ +\text{ terms of degree $\ge2^{a+2}$}
\\\quad\text{for $s\equiv 2^a\pmod{2^{a+1}}$.}
\end{multline*}
We calculate 
\begin{align*}
&(1+y)^{-1}(1+x+y)^{2^a}(1+x+y+u)^{2^a}
\\
&\qquad=((1+y)^{2^a-1}+x^{2^a})(1+x+y+u)^{2^a},
\\
&(1+y)^{-1}(1+x+y+u)^{-1}(1+x+y)^{2^a}(1+x+y+u)^{2^a}
\\
&\qquad=((1+y)^{2^a-1}+x^{2^a})(1+x+y+u)^{2^a-1},
\\
&(1+y)^{-1}(1+x+y+u)^{-1}(1+x+y)^{-1}(1+x+y)^{2^a}(1+x+y+u)^{2^a}
\\
&\qquad=((1+y)^{2^a-2}+(1+x)^{2^a-1}-1)(1+x+y+u)^{2^a-1},
\\
&(1+y)^{-1}(1+x+y+u)^{-2}(1+x+y)^{-2}(1+x+y)^{2^a}(1+x+y+u)^{2^a}
\\
&\qquad=((1+y)^{2^a-3}+(1+x)^{2^a-2}-1)(1+x+y+u)^{2^a-2},
\end{align*}
where we assume $a>0$ for the next to last equation
and $a>1$ for the last equation.
These expressions have terms of maximal degree $3\cdot 2^a$, $3\cdot 2^a-2$,
$3\cdot 2^a-3$, $3\cdot 2^a-6$ respectively.  We have shown that
$\overline w_d(\xi_{n,k})=0$ for $\ell\le d<\ell'$.
\end{proof}

\begin{prop}\label{prop:even-2-wbar}
Let $k=2r$, $r\ge2$, $n>0$, $n\equiv r+1\pmod 2$.
Then for
\[(\ell,\ell')=\begin{cases}
(3\cdot 2^a, 2^{a+2}),&\text{$r=2s$, $s\equiv 2^a\pmod{2^{a+1}}$},\\
(3\cdot 2^a-1, 2^{a+2}),&\text{$r=2s+1$, $s\equiv 2^a\pmod{2^{a+1}}$} \\
\end{cases}
\]
and $\ell\le d<\ell'$ we have 
\[\wbar_d (\xi_{n,k})=0.\]
The same is true for $r=2s+2$, $s\equiv 2^a\pmod{2^{a+1}}$, $a>0$ and
$(\ell,\ell')=(3\cdot 2^a-3,2^{a+2})$.
\end{prop}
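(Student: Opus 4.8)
The plan is to repeat the argument of \prettyref{prop:even-4-wbar}, now in the polynomial ring $H^\ast(D_{2m};\Z_2)\isom\Z_2[\alpha,\beta]$ and starting from \prettyref{prop:even-2-w}, which applies because $k=2r$ and $n\equiv r+1\pmod2$. Put $R=(1+\alpha)(1+\beta)(1+\alpha+\beta)$; expanding in $\Z_2[\alpha,\beta]$ gives $R=1+\alpha^2+\beta^2+\alpha\beta+\alpha^2\beta+\alpha\beta^2$, so $R=1+z$ with $z$ a sum of terms of degree $2$ and $3$. Hence $R^{-2^{a'}}=(1+z)^{-2^{a'}}=(1+z^{2^{a'}})^{-1}$ is $1$ plus terms of degree $\ge2^{a'+1}$; and since $z^{2^a}$ has all terms of degree $\le3\cdot2^a<2^{a+2}$ while in $(1+z^{2^a})^{-1}=1+z^{2^a}+z^{2^{a+1}}+\cdots$ the summand $z^{2^{a+1}}$ already has degree $\ge2^{a+2}$, we get $R^{-2^a}=R^{2^a}+(\text{terms of degree}\ge2^{a+2})$. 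Multiplying over the binary digits of~$s$, the digits above~$a$ each contributing a factor congruent to~$1$ modulo degree $2^{a+2}$, we obtain for every $s\equiv2^a\pmod{2^{a+1}}$ that
\[
R^{-s}=(1+\alpha)^{2^a}(1+\beta)^{2^a}(1+\alpha+\beta)^{2^a}+(\text{terms of degree}\ge2^{a+2}).
\]

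Next I would use \prettyref{prop:even-2-w} in the regrouped form $w(\xi_{n,k})=(1+\alpha)^{1+\lfloor r/2\rfloor}(1+\beta)^{\lceil r/2\rceil}(1+\alpha+\beta)^{\lfloor r/2\rfloor}$ and pull out a power of~$R$ according to which of the cases $r=2s$, $r=2s+1$, $r=2s+2$ one is in: one gets $w(\xi_{n,k})=(1+\alpha)R^{s}$, respectively $w(\xi_{n,k})=(1+\alpha)(1+\beta)R^{s}$, respectively $w(\xi_{n,k})=(1+\alpha)R^{s+1}$. Inverting $w$, substituting the congruence for $R^{-s}$ above (and, in the third case, writing $R^{-(s+1)}=R^{-1}R^{-s}$ and using $R^{-1}R^{2^a}=R^{2^a-1}$), and cancelling the explicit linear factors against $(1+\alpha)^{2^a}$, $(1+\beta)^{2^a}$ and $(1+\alpha+\beta)^{2^a}$, one finds that modulo terms of degree $\ge2^{a+2}$ the class $\wbar(\xi_{n,k})$ equals $(1+\alpha)^{2^a-1}(1+\beta)^{2^a}(1+\alpha+\beta)^{2^a}$ when $r=2s$, equals $(1+\alpha)^{2^a-1}(1+\beta)^{2^a-1}(1+\alpha+\beta)^{2^a}$ when $r=2s+1$, and equals $(1+\alpha)^{2^a-2}(1+\beta)^{2^a-1}(1+\alpha+\beta)^{2^a-1}$ when $r=2s+2$. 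These three polynomials have top degree $3\cdot2^a-1$, $3\cdot2^a-2$ and $3\cdot2^a-4$, so $\wbar_d(\xi_{n,k})=0$ for $\ell\le d<2^{a+2}$ with $\ell=3\cdot2^a$, $3\cdot2^a-1$ and $3\cdot2^a-3$ respectively, which is precisely the claim. In the third case the exponent $2^a-2$ must be $\ge0$, which is why $a>0$ is required there; in the first two cases all exponents are nonnegative for every $a\ge0$.

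There is no genuine obstacle here: the computation is formally the same as in \prettyref{prop:even-4-wbar}, and in fact a little shorter, since $\Z_2[\alpha,\beta]$ carries no relations and the products above are therefore honest and need not be multiplied out. The only point demanding a little care is the bookkeeping of the estimate $R^{-s}\equiv R^{2^a}$ modulo degree $2^{a+2}$, together with the verification that the low-degree part of $\wbar(\xi_{n,k})$ has exactly the claimed top degree in each of the three cases, in particular in the case $r=2s+2$, where the exponent of~$R$ is shifted by one.
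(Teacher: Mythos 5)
Your proposal is correct and follows essentially the same route as the paper's own proof: the key observation is the congruence $R^{-1}\equiv R$ modulo terms of degree $\ge4$ with $R=(1+\alpha)(1+\beta)(1+\alpha+\beta)$ (phrased in the paper as a direct computation, and by you via $R=1+z$ with $z$ of degree $\ge2$), and from there the identical case-by-case bookkeeping to extract the top degree of the truncation of $\wbar(\xi_{n,k})$. Writing $w$ in the regrouped forms $(1+\alpha)R^s$, $(1+\alpha)(1+\beta)R^s$, $(1+\alpha)R^{s+1}$ is a slightly tidier presentation of the same intermediate expressions the paper computes explicitly.
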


\begin{cor}\label{cor:even-2-wbar}
Let $k=2r$, $r\ge3$, $n>0$, $n\equiv r+1\pmod 2$.
Then there is an
even $d\ge0$ such that $\wbar_d(\xi_{n,k})=0$.
\qed
\end{cor}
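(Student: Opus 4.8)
The plan is to read off the corollary from \prettyref{prop:even-2-wbar}. That proposition, for an appropriate way of writing~$r$ and the corresponding choice of~$a$, exhibits an interval of degrees $d$, namely $\ell\le d<\ell'$, on which $\wbar_d(\xi_{n,k})$ vanishes. An interval of integers $\{\ell,\ell+1,\dots,\ell'-1\}$ contains an even number as soon as it contains two consecutive integers, i.e.\ as soon as $\ell'-\ell\ge2$; so it will be enough to arrange, for every $r\ge3$, a representation of~$r$ to which \prettyref{prop:even-2-wbar} applies with interval length at least~$2$. Note also that in all the cases below one has $\ell\ge2$, so the even $d$ that we produce is automatically~$\ge0$.

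For odd $r$ I would write $r=2s+1$ with $s\ge1$, say $s\equiv2^a\pmod{2^{a+1}}$; then \prettyref{prop:even-2-wbar} gives $(\ell,\ell')=(3\cdot2^a-1,2^{a+2})$, and the number of admissible degrees is $2^{a+2}-3\cdot2^a+1=2^a+1\ge2$, so one of them is even. For even $r$ I would write $r=2s$ with $s\ge2$ (using $r\ge3$, so in fact $r\ge4$). If $s$ is even, then $s\equiv2^a\pmod{2^{a+1}}$ with $a\ge1$, and the first case of \prettyref{prop:even-2-wbar} gives the interval $(3\cdot2^a,2^{a+2})$, of length $2^a\ge2$, again containing an even degree. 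If $s$ is odd, then $s\ge3$, and instead I would use the representation $r=2s'+2$ with $s'=s-1\ge2$; since $s'$ is a positive even integer it satisfies $s'\equiv2^a\pmod{2^{a+1}}$ with $a\ge1>0$, so the third case of \prettyref{prop:even-2-wbar} applies and yields the interval $(3\cdot2^a-3,2^{a+2})$, of length $2^a+3\ge5$, which contains an even degree.

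I do not expect a genuine obstacle here; the only point needing attention is that, when $r$ is twice an odd number, one must pass to the ``$r=2s+2$'' form of \prettyref{prop:even-2-wbar} (whose side condition $a>0$ is then automatic, because $s-1$ is a positive even integer), since the ``$r=2s$'' form can produce an interval of length only~$1$ and hence no even degree. The three cases together exhaust all $r\ge3$, which finishes the proof.
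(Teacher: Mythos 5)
Your argument is correct and is exactly the case analysis that the \qed{} in the paper is asking the reader to supply: for odd $r$ the interval from \prettyref{prop:even-2-wbar} has length $2^a+1\ge2$; for $r=2s$ with $s$ even it has length $2^a\ge2$ (as $a\ge1$); and for $r=2s$ with $s$ odd one must switch to the ``$r=2s+2$'' form with $s'=s-1\ge2$ even, where the side condition $a>0$ holds automatically and the interval has length $2^a+3\ge5$. Since an integer interval of length at least $2$ contains an even number, and the stated $\ell$ is always $\ge2$, the corollary follows; this matches the paper's intended (unwritten) argument.
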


\begin{proof}[Proof of \prettyref{prop:even-2-wbar}]
We use \prettyref{prop:even-2-w}.
We have 
\begin{multline*}
((1+\beta)(1+\alpha)(1+\alpha+\beta))^{-1}
\\=1+\alpha^2+\alpha\beta+\beta^2+\alpha^2\beta+\beta\alpha^2 \ +\text{ terms of degree $\ge4$}
\\=(1+\beta)(1+\alpha)(1+\alpha+\beta) \ +\text{ terms of degree $\ge4$}.
\end{multline*}
It follows that
\[((1+\beta)(1+\alpha)(1+\alpha+\beta))^{-2^{a'}}
= 1\ +\text{ terms of degree $\ge 2^{a+2}$}
\text{ for $a'>a$}
\]
and
\begin{multline*}
((1+\beta)(1+\alpha)(1+\alpha+\beta))^{-s}
\\= 
(1+\beta)^{2^a}(1+\alpha)^{2^a}(1+\alpha+\beta)^{2^a}
+\text{ terms of degree $\ge 2^{a+2}$}
\\\text{ for $s\equiv 2^a\pmod{2^{a+1}}$}
\end{multline*}
Now
\begin{align*}
&(1+\alpha)^{-1}(1+\beta)^{2^a}(1+\alpha)^{2^a}(1+\alpha+\beta)^{2^a}
\\
&\qquad=(1+\beta)^{2^a}(1+\alpha)^{2^a-1}(1+\alpha+\beta)^{2^a}
,\\
&(1+\alpha)^{-1}(1+\beta)^{-1}(1+\beta)^{2^a}(1+\alpha)^{2^a}(1+\alpha+\beta)^{2^a}
\\
&\qquad=(1+\beta)^{2^a-1}(1+\alpha)^{2^a-1}(1+\alpha+\beta)^{2^a}
,\\
&(1+\alpha)^{-2}(1+\beta)^{-1}(1+\alpha+\beta)^{-1}
(1+\beta)^{2^a}(1+\alpha)^{2^a}(1+\alpha+\beta)^{2^a}
\\
&\qquad=(1+\beta)^{2^a-1}(1+\alpha)^{2^a-2}(1+\alpha+\beta)^{2^a-1},
\end{align*}
where we assume $a>0$ in the last equation.
These expressions have terms of maximal degree 
$3\cdot 2^a-1$, $3\cdot 2^a-2$, $3\cdot 2^3-4$ respectively.
We have shown that
$\overline w_d(\xi_{n,k})=0$ for $\ell\le d<\ell'$.
\end{proof}

\begin{prop}\label{prop:odd-wbar}
Let $k=2r+1$, $r>0$, $n>0$.
Let $r\equiv 2^a\pmod{2^{a+1}}$.
For $2^a\le d<2^{a+1}$ we have 
\[\wbar_d (\xi_{n,k})=0.\]
\end{prop}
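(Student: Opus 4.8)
The plan is to obtain $\wbar(\xi_{n,k})$ directly from the total Stiefel--Whitney class already computed in \prettyref{prop:odd-w} and then reduce the vanishing statement to a classical fact about binomial coefficients modulo~$2$.

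First I would record that $m=2(n+r)+1$ is odd, so $\phi_\rho$ identifies $H^\ast(D_{2m};\Z_2)$ with $H^\ast(C_2;\Z_2)=\Z_2[\alpha]$, and by \prettyref{prop:odd-w} we have $w(\xi_{n,k})=(1+\alpha)^{r+1}$ in this ring. Since $\Z_2[\alpha]$ is a polynomial ring in a single degree-one generator, the defining relation $\wbar(\xi_{n,k})\,w(\xi_{n,k})=1$ determines $\wbar(\xi_{n,k})$ unambiguously, degree by degree, as the formal inverse
\[
\wbar(\xi_{n,k})=(1+\alpha)^{-(r+1)}=\Bigl(\sum_{i\ge0}\alpha^i\Bigr)^{r+1}
=\sum_{d\ge0}\binom{d+r}{r}\alpha^d .
\]
Hence $\wbar_d(\xi_{n,k})=0$ if and only if $\binom{d+r}{r}$ is even, and the proposition becomes the purely combinatorial claim that $\binom{d+r}{r}$ is even whenever $2^a\le d<2^{a+1}$ and $r\equiv 2^a\pmod{2^{a+1}}$.

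For that I would invoke Kummer's theorem: $\binom{d+r}{r}$ is odd exactly when adding $d$ and $r$ in base~$2$ produces no carry, that is, when the binary expansions of $d$ and $r$ have no digit equal to~$1$ in a common position. The hypothesis $r\equiv 2^a\pmod{2^{a+1}}$ says precisely that the least significant nonzero binary digit of~$r$ sits in position~$a$, while $2^a\le d<2^{a+1}$ forces the binary expansion of~$d$ to have its $2^a$-digit equal to~$1$ (and all higher digits zero). So there is a carry in position~$a$, whence $\binom{d+r}{r}$ is even and $\wbar_d(\xi_{n,k})=0$ for every such~$d$.

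I do not anticipate a serious obstacle here. The only points needing care are checking that $m$ is odd so that \prettyref{prop:odd-w} applies in the stated form, and translating the congruence $r\equiv 2^a\pmod{2^{a+1}}$ correctly into a statement about the lowest set bit of~$r$; the remainder is the standard mod-$2$ binomial identity, which could equally be phrased through Lucas' theorem.
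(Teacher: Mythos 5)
Your proof is correct, but it takes a genuinely different route from the paper. Both start from \prettyref{prop:odd-w}, i.e.\ $w(\xi_{n,k})=(1+\alpha)^{r+1}$ in $\Z_2[\alpha]$. The paper then manipulates the formal power series $(1+\alpha)^{-(r+1)}$ using the Frobenius identity $(1+\alpha)^{2^{b}}=1+\alpha^{2^{b}}$: it shows $(1+\alpha)^{-2^{a'}}=1+\text{(terms of degree $\ge 2^{a+1}$)}$ for $a'>a$, concludes $(1+\alpha)^{-(r+1)}=(1+\alpha)^{2^a-1}+\text{(terms of degree $\ge 2^{a+1}$)}$, and reads off the vanishing in the degree window $[2^a,2^{a+1})$ because the explicit polynomial part has degree $\le 2^a-1$. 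You instead expand $(1+\alpha)^{-(r+1)}$ in closed form as $\sum_{d\ge0}\binom{d+r}{r}\alpha^d$ and reduce the statement to the combinatorial fact that $\binom{d+r}{r}$ is even whenever bit $a$ is set in both $d$ and $r$, which follows from Kummer's (or Lucas') theorem because the base-$2$ addition $d+r$ then produces a carry at position~$a$. Your approach is more elementary and in fact yields the exact set of $d$ for which $\wbar_d(\xi_{n,k})=0$, not just the range asserted; the paper's power-series manipulation has the advantage of being the same technique that carries over to \prettyref{prop:even-2-wbar} and \prettyref{prop:even-4-wbar}, where $H^\ast(D_{2m};\Z_2)$ is not a polynomial ring in one variable and no such closed-form binomial expansion is available.
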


\begin{cor}\label{cor:odd-wbar}
Let $k=2r+1$, $r>0$, $n>0$.
Then there is a $d>0$ which is even or equals~$1$
such that $\wbar_d(\xi_{n,k})=0$.
\qed
\end{cor}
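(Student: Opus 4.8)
The plan is to deduce everything from the computation of $w(\xi_{n,k})$ in \prettyref{prop:odd-w} together with the fact that $m=2n+k=2(n+r)+1$ is odd, so that $H^\ast(D_{2m};\Z_2)\isom H^\ast(C_2;\Z_2)=\Z_2[\alpha]$, a ring which is nonzero in every degree. By \prettyref{prop:odd-w} we have $w(\xi_{n,k})=(1+\alpha)^{r+1}$, hence $\wbar(\xi_{n,k})=(1+\alpha)^{-(r+1)}$ as an element of the completed graded ring $\Z_2[[\alpha]]$; since $w(\xi_{n,k})$ has constant term $1$ this inverse exists and its degree-$d$ part is $\wbar_d(\xi_{n,k})$.

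Next I would use the hypothesis $r\equiv 2^a\pmod{2^{a+1}}$, which amounts to writing $r+1=2^a+1+2^{a+1}q$ with $q\ge0$. Working over $\Z_2$ we have $(1+\alpha)^{2^j}=1+\alpha^{2^j}$, so
\[(1+\alpha)^{r+1}=(1+\alpha)^{2^a+1}\bigl(1+\alpha^{2^{a+1}}\bigr)^{q},\]
and therefore
\[\wbar(\xi_{n,k})=(1+\alpha)^{-(2^a+1)}\bigl(1+\alpha^{2^{a+1}}\bigr)^{-q}.\]
The factor $\bigl(1+\alpha^{2^{a+1}}\bigr)^{-q}$ equals $1$ plus a sum of terms of degree at least $2^{a+1}$, so it contributes nothing to any coefficient in degrees below $2^{a+1}$. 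It then suffices to determine the coefficient of $\alpha^d$ in
\[(1+\alpha)^{-(2^a+1)}=(1+\alpha^{2^a})^{-1}(1+\alpha)^{-1}=\bigl(\textstyle\sum_{j\ge0}\alpha^{2^a j}\bigr)\bigl(\sum_{i\ge0}\alpha^{i}\bigr)\]
for $2^a\le d<2^{a+1}$: in that range exactly the summands $j=0$ and $j=1$ of the first series contribute (each paired with a unique power of the second series), so the coefficient is $1+1=0$; for $0\le d<2^a$ only $j=0$ contributes and the coefficient is $1$.

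This gives $\wbar_d(\xi_{n,k})=0$ for $2^a\le d<2^{a+1}$, as asserted. I do not expect a genuine obstacle here: once \prettyref{prop:odd-w} is available the argument is an elementary binomial computation over $\Z_2$, and the only point requiring a little care is the bookkeeping of degrees, so that the undetermined factor $\bigl(1+\alpha^{2^{a+1}}\bigr)^{-q}$ is correctly seen to be irrelevant in the range $[2^a,2^{a+1})$.
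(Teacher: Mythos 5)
Your computation is correct, and it is essentially the paper's route: the Corollary is a one-line consequence of \prettyref{prop:odd-wbar}, and what you prove is exactly \prettyref{prop:odd-wbar} (that $\wbar_d(\xi_{n,k})=0$ for $2^a\le d<2^{a+1}$, where $r\equiv2^a\pmod{2^{a+1}}$). Your bookkeeping differs only cosmetically from the paper's: the paper shows $\wbar(\xi_{n,k})=(1+\alpha)^{2^a-1}+\text{(terms of degree $\ge2^{a+1}$)}$ and reads off the vanishing range from the fact that $(1+\alpha)^{2^a-1}$ is supported in degrees $\le 2^a-1$, while you split off the factor $(1+\alpha^{2^{a+1}})^{-q}$ and count Cauchy-product terms in $(1+\alpha^{2^a})^{-1}(1+\alpha)^{-1}$ directly; both are the same elementary binomial argument over $\Z_2$.

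The only thing you leave unsaid is the final reduction to the stated Corollary, whose conclusion is the existence of a $d>0$ which is even or equals $1$ (not the full vanishing range). That step is immediate but should be recorded: let $a\ge0$ be determined by $r\equiv 2^a\pmod{2^{a+1}}$ (this $a$ exists and is unique since $r>0$); if $a=0$ take $d=1$, and if $a\ge1$ take $d=2^a$, which is even. In either case $d$ lies in $[2^a,2^{a+1})$, so $\wbar_d(\xi_{n,k})=0$ by what you proved.
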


\begin{proof}[Proof of \prettyref{prop:odd-wbar}]
We use \prettyref{prop:odd-w}.
We have 
\begin{equation*}
(1+\alpha)^{-1}
=1+\alpha\ +\text{ terms of degree $\ge2$}.
\end{equation*}
It follows that
\[(1+\alpha)^{-2^{a'}}
= 1\ +\text{ terms of degree $\ge 2^{a+1}$}
\text{ for $a'>a$}
\]
and
\begin{multline*}
(1+\alpha)^{-r}
= 
(1+\alpha)^{2^a}
+\text{ terms of degree $\ge 2^{a+1}$}
\\\text{ for $r\equiv 2^a\pmod{2^{a+1}}$.}
\end{multline*}
Hence
\begin{equation*}
(1+\alpha)^{-(r+1)}
= 
(1+\alpha)^{2^a-1}
+\text{ terms of degree $\ge 2^{a+1}$}
\end{equation*}
We have shown that
$\overline w_d(\xi_{n,k})=0$ for $2^a\le d<2^{a+1}$.
\end{proof}

\section{Final Remarks}

Our proofs that some stable Kneser graphs $SG_{n,k}$ are not test
graphs only yield this result for $n\ge N(k)$, with no upper bound on
$N(k)$.  To obtain upper bounds from the current proofs would
necessitate to find a lower bound for the norm of the expression in
the inequality \prettyref{eq:fourier} in the Fourier analysis
lemma~\ref{lem:fourier}, and to substitute this for the compactness
argument in the proof of \prettyref{prop:eps}.  We have not tried
this.  For some cases a more direct combinatorial proof might be
possible.  The case in which $SG_{n,k}$ fails to be a test graph in
the lowest possible dimension is $k\equiv3\pmod4$.  We make this case
explicit.
\begin{prop}\label{prop:3-not-test}
Let $k\equiv3\pmod4$.  Then there is an $N(k)$ such that for all
$n\ge N(k)$ there is a graph $G$ such that
$\Hom(SG_{n,k},G)$ is non-empty and path-connected
and $\chi(G)=\chi(SG_{n,k})=k+2$.
\end{prop}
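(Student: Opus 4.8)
The plan is to apply \prettyref{prop:not-test-graph} with $r=1$: that proposition already produces, for large~$n$ with $\wbar_1(\xi_{n,k})=0$, a graph~$G$ with $\Hom(SG_{n,k},G)$ being $0$-connected and $\chi(G)<k+2+1$, so the whole matter reduces to two routine checks. First, one must verify $\wbar_1(\xi_{n,k})=0$ for all~$n$ when $k\equiv3\pmod4$. Second, one must upgrade the bound $\chi(G)<k+3$ coming from \prettyref{prop:not-test-graph} to the exact value $\chi(G)=k+2$.

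For the first check, I would note that $k\equiv3\pmod4$ is the same as writing $k=2r+1$ with $r$ odd, and an odd~$r$ means $r\equiv2^0\pmod{2^1}$, i.e.\ $a=0$ in the notation of \prettyref{prop:odd-wbar}. Since $k$ is odd, $m=2n+k$ is odd, and \prettyref{prop:odd-wbar} applied with $a=0$ gives $\wbar_d(\xi_{n,k})=0$ for $2^0\le d<2^1$, that is, $\wbar_1(\xi_{n,k})=0$, for every $n>0$. (Equivalently, \prettyref{prop:odd-w} gives $w(\xi_{n,k})=(1+\alpha)^{r+1}$ with $r+1$ even, whence $\wbar_1(\xi_{n,k})=w_1(\xi_{n,k})=(r+1)\alpha=0$.) As $r=1$ is one of the two cases admitted by \prettyref{prop:not-test-graph}, plugging it in yields an $N(k)\ge2$ such that for all $n\ge N(k)$ there is a graph~$G$ with $\Hom(SG_{n,k},G)$ being $0$-connected --- that is, non-empty and path-connected, matching \prettyref{def:homotopy-test-graph} with $r=1$ --- and with $\chi(G)<k+3$.

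For the second check, non-emptiness of $\Hom(SG_{n,k},G)$ provides a graph homomorphism $SG_{n,k}\to G$, so $\chi(G)\ge\chi(SG_{n,k})=k+2$; combined with $\chi(G)\le k+2$ this forces $\chi(G)=k+2=\chi(SG_{n,k})$, which completes the proof. I do not expect a genuine obstacle here: once \prettyref{prop:odd-wbar} and \prettyref{prop:not-test-graph} are in hand this is bookkeeping. The only things needing a little care are the translation between ``$(r-1)$-connected with $r=1$'' and ``non-empty and path-connected'' (so that the statement is exactly the failure of \prettyref{def:homotopy-test-graph} in the lowest possible dimension $r=1$), and the fact that the vertex criticality of $SG_{n,k}$ and Braun's identity $\Aut(SG_{n,k})=D_{2m}$ for $n>1$ --- both used inside \prettyref{prop:not-test-graph} via \prettyref{thm:crit-test} --- are available because the proposition supplies $N(k)\ge2$.
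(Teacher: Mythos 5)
Your proposal is correct and matches the paper's (one-line) proof exactly: the paper cites \prettyref{prop:not-test-graph} and \prettyref{prop:odd-wbar}, and you have simply spelled out the case $r=1$, $a=0$ together with the routine upgrade from $\chi(G)\le k+2$ to equality via the graph homomorphism coming from a vertex of the non-empty complex $\Hom(SG_{n,k},G)$.
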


\begin{proof}
This follows from \prettyref{prop:not-test-graph} and 
\prettyref{prop:odd-wbar}.
\end{proof}
This rests on the following fact, which is also the source of the bound~$N(k)$.
\begin{prop}\label{prop:3-not-test-low-level}
Let $k\equiv3\pmod4$.  Then there is an $N(k)$ such that for all $n\ge
N(k)$ there are a colouring $c\colon SG_{n,k}\to K_{k+2}$ and for each
$\gamma\in D_{2m}$, $m=2n+k$, a path of graph homomorphisms from $c$
to $\gamma\cdot c$ in the graph $[SG_{n,k},K_{k+2}]$ (or,
equivalently, in the cell complex $\Hom(SG_{n,k}, K_{k+2})$).
\end{prop}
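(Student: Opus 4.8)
The plan is to run the argument of \prettyref{prop:not-test-graph} in the case $r=1$ (here $r$ being the connectivity parameter) and to extract from it the colouring and the paths directly, without invoking \prettyref{thm:crit-test}; in particular the target graph stays fixed equal to~$K_{k+2}$.

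First I would note that $k\equiv3\pmod4$ forces $k\ge3$ and that, writing $k=2l+1$, the congruence says $l$ is odd, i.e.\ $l\equiv2^0\pmod{2^1}$; applying \prettyref{prop:odd-wbar} (whose parameter ``$r$'' is our~$l$) with $a=0$ then gives $\wbar_1(\xi_{n,k})=0$ for every $n>0$. Next I would fix $\eps>0$ as provided by \prettyref{prop:borsuk-to} for this~$k$ and for $r=1$, and let $N(k)$ be the integer that \prettyref{prop:eps} yields for that~$\eps$, so that for all $n\ge N(k)$ and $m=2n+k$ there is a $D_{2m}$-equivariant graph homomorphism $SG_{n,k}\to_{D_{2m}}B_\eps(\Spherenk)$. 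Fixing such an~$n$, I would apply \prettyref{prop:borsuk-to} with $\Gamma=D_{2m}$, $\R^{k+1}=W_{n,k}$, and $X$ a finite triangulation of $E_1D_{2m}$ (for instance $D_{2m}\join D_{2m}$). For $r=1$ its hypothesis asks only for a $0$-dimensional bundle $\eta$ with $\xi\dplus\eta$ trivial, i.e.\ for $\xi=\R^{k+1}\times_{D_{2m}}\real{X}\to\real{X}/D_{2m}$ itself to be trivial. As $\real{X}$ is a free $D_{2m}$-CW-complex, $\xi$ is classified by a map $f\colon\real{X}/D_{2m}\to\B D_{2m}$ with $\xi=f^\ast\xi_{n,k}$, so naturality of $\wbar_1$ and $\wbar_1(\xi_{n,k})=0$ give $w_1(\xi)=\wbar_1(\xi)=f^\ast\wbar_1(\xi_{n,k})=0$; hence $\xi$ is orientable and therefore trivial over the finite $1$-complex $\real{X}/D_{2m}$, which is exactly the content of \prettyref{prop:wbar-obs} for $r=1$. \prettyref{prop:borsuk-to} then supplies a $D_{2m}$-invariant subdivision~$Y$ of~$X$ with $\chi(B_\eps(\Spherenk)\times_{D_{2m}}Y^1)<k+3$, and composing with the homomorphism of \prettyref{prop:eps} gives a graph homomorphism $SG_{n,k}\times_{D_{2m}}Y^1\to K_{k+2}$, i.e.\ $\chi(SG_{n,k}\times_{D_{2m}}Y^1)\le k+2$.

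The final step is bookkeeping with the adjunction $\mathcal G(Z\times G,H)\isom\mathcal G(Z,[G,H])$ of \prettyref{sec:graph-complexes}. The homomorphism just obtained is adjoint to a $D_{2m}$-equivariant graph homomorphism $\phi\colon Y^1\to[SG_{n,k},K_{k+2}]$, where $D_{2m}$ acts on the target through its action on $SG_{n,k}$. Since $Y$ triangulates the $0$-connected space $E_1D_{2m}$, the reflexive graph $Y^1$ is connected, and graph homomorphisms preserve loops, so picking any vertex $y_0$ of~$Y^1$ and setting $c\deq\phi(y_0)$ produces a looped vertex of $[SG_{n,k},K_{k+2}]$, i.e.\ a proper $(k+2)$-colouring of $SG_{n,k}$. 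Given $\gamma\in D_{2m}$, I would choose a walk $y_0=z_0,z_1,\dots,z_\ell=y_0\cdot\gamma$ in the connected reflexive graph $Y^1$; applying~$\phi$ gives a walk $c=\phi(z_0),\phi(z_1),\dots,\phi(z_\ell)=\phi(y_0\cdot\gamma)$ through looped vertices of $[SG_{n,k},K_{k+2}]$, and by equivariance $\phi(y_0\cdot\gamma)$ is the $D_{2m}$-translate $\gamma\cdot c$ of~$c$. This is exactly a path of graph homomorphisms from $c$ to $\gamma\cdot c$, and, via the equivalence recalled in the statement, also a path in $\Hom(SG_{n,k},K_{k+2})$.

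I expect the only slightly delicate points to be the two $r=1$ specialisations: reading the ``$(r-1)$-dimensional bundle'' of \prettyref{prop:borsuk-to} as the zero bundle, so that its hypothesis collapses to triviality of~$\xi$ and is supplied by \prettyref{prop:wbar-obs}, and confirming that the~$\eps$ of \prettyref{prop:borsuk-to}, and with it $N(k)$, depends on~$k$ alone. Note that neither vertex criticality of $SG_{n,k}$ nor Braun's identification of $\Aut(SG_{n,k})$ is used here, since the target is the fixed graph~$K_{k+2}$; those inputs, together with $\chi(SG_{n,k})=k+2$, enter only when \prettyref{prop:3-not-test-low-level} is used to derive \prettyref{prop:3-not-test}.
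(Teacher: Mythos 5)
Your proof is correct and takes essentially the same route as the paper: the paper's (very terse) proof simply observes that the data asserted in the proposition is, via the exponential adjunction and the connectedness of a $\Gamma$-invariant triangulation of $E_1D_{2m}$, a reformulation of the colouring $SG_{n,k}\times_{D_{2m}}Y^1\to K_{k+2}$ produced in the proof of Proposition~\ref{prop:not-test-graph} for $r=1$, and you have unwound exactly that argument, checking the $r=1$ specialisations of Propositions \ref{prop:wbar-obs} and \ref{prop:borsuk-to} along the way.
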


\begin{proof}
These paths of graph colourings constitute, for $X$ a subdivision of
the $1$-dimensional complex $E_1D_{2m}$, an equivariant graph
homomorphism $X^1\to_{D_{2m}}[SG_{n,k},K_{k+2}]$ and therefore the colouring
$SG_{2n,k}\times_{D_{2m}}X^1\to K_{k+2}$ of the proof of
\prettyref{prop:not-test-graph}.  Also note that the vanishing of the
obstruction $\wbar_1(\xi_{n,k})$ in this case just means that
all elements of $D_{2m}$ operate on $\Spherenk$ by elements of~$\SO(k+1)$.
\end{proof}

This purely graph theoretic statement might have a purely combinatorial
proof which yields a concrete bound $N(k)$.  We do not rule out 
the possibility that
the proposition holds for~$N(k)=2$.

\newcommand{\etalchar}[1]{$^{#1}$}

\end{document}